\newif\ifpictures
\DeclareMathOperator{\val}{val}
\DeclareMathOperator{\T}{\mathcal{T}}   
\DeclareMathOperator{\V}{\mathcal{V}}   
\DeclareMathOperator{\trop}{trop}       
\DeclareMathOperator{\new}{New}         
\DeclareMathOperator{\aff}{aff}
\DeclareMathOperator{\conv}{conv}
\DeclareMathOperator{\face}{face}
\DeclareMathOperator{\LC}{LC} 
\DeclareMathOperator{\contr}{\alpha} 
\DeclareMathOperator{\SIP}{P}
\begin{document}

\newtheorem{defi}{Definition}[section]
\newtheorem{satz}[defi]{Satz}
\newtheorem{lemma}[defi]{Lemma}
\newtheorem{kor}[defi]{Corollary}
\newtheorem{prop}[defi]{Proposition}
\newtheorem{bem}[defi]{Quotation}
\newtheorem{theo}[defi]{Theorem}
\newtheorem{conj}[defi]{Conjecture}
\newtheorem{bsp1}[defi]{Example}
\newtheorem{observation}[defi]{Observation}
\newenvironment{bsp}{\begin{bsp1}\rm}{\end{bsp1}}
\newtheorem{remark1}[defi]{Remark}
\newenvironment{remark}{\begin{remark1}\rm}{\end{remark1}}
\newcommand{\ZZ}{\mathbb{Z}}
\newcommand{\Z}{\mathbb{Z}}
\newcommand{\RR}{\mathbb{R}}
\newcommand{\CC}{\mathbb{C}}
\newcommand{\R}{\mathbb{R}}
\newcommand{\NN}{\mathbb{N}}
\newcommand{\C}{\mathbb{C}}
\newcommand{\QQ}{\mathbb{Q}}
\newcommand{\XX}{\mathcal{X}}
\newcommand{\beweis}{{\bfseries{Proof}\hspace{5mm}}}
\newcommand{\note}{{\bfseries{Notation}\hspace{5mm}}}
\newcommand{\atopfrac}[2]{\genfrac{}{}{0pt}{}{#1}{#2}}
\newcommand{\s}{{\ss\;}}

\title[Projections of tropical varieties and their self-intersections]{Projections of tropical varieties \\ and their self-intersections}

\author{Kerstin Hept and Thorsten Theobald}
\address{FB 12 -- Institut f\"ur Mathematik,
  Goethe-Universit\"at, 
  Postfach 111932, D-60054 Frankfurt am Main, Germany} 
\email{\{hept,theobald\}@math.uni-frankfurt.de}



\begin{abstract}
We study algebraic and combinatorial aspects of (classical)
projections of $m$-dimensional tropical varieties onto
$(m+1)$-dimensional planes. 
Building upon the work of Sturmfels, Tevelev, and
Yu on tropical elimination as well as the work of the authors 
on projection-based tropical bases, we characterize
algebraic properties of the relevant ideals and provide a 
characterization of the dual subdivision (as a subdivision
of a fiber polytope). This dual subdivision naturally leads to
the issue of self-intersections of a tropical variety under
projections. For the case of curves,
we provide some bounds for the (unweighted) number of 
self-intersections of projections onto the plane and give
constructions with many self-intersections. 
\end{abstract}

\maketitle


\section{Introduction\label{se:introduction}}

In the last years, tropical geometry has received much interest as
a field combining aspects of algebraic geometry, discrete geometry,
and computer algebra (for general background see, e.g., 
\cite{gathmann-2006, mikhalkin-icm,rgst}). The basic setup is
as follows.
Given a field $K$ with a real valuation $\val:K\rightarrow \mathbb{R}_{\infty}=\mathbb{R}\cup \{ \infty\}$ (i.e.\ $K=\mathbb{Q}$ with the $p$-adic valuation or 
the field $K=\mathbb{C}\{\{t\}\}$ of Puiseux series with the natural valuation)
the valuation map extends to an algebraic closure $\bar{K}$ 
and to $\bar{K}^n$ via
\[ \val \, : \, \bar{K}^n \ \rightarrow \ \mathbb{R}_{\infty}^n, \quad (a_1,\ldots, a_n) \ \mapsto\ (\val(a_1),\ldots,\val(a_n)) \, .
\]
Then for any polynomial $f=\sum_{\alpha} c_\alpha x^\alpha \in 
K[x_1,\ldots, x_n]$ the \emph{tropicalization of} $f$ is defined as
the polynomial over the tropical semiring $(\R_{\infty}, \oplus, \odot) := 
(\R_{\infty}, \min, +)$
\[\trop(f) \ =  \ \bigoplus_\alpha \val(c_\alpha)\odot x^\alpha \ = \ \min_\alpha\{\val(c_\alpha)+\alpha_1x_1+\cdots+\alpha_nx_n\} \, , \]
and the \emph{tropical hypersurface} of $f$ is 
\[\T(f) \ = \ \{w\in\mathbb{R}^n \, : \, \mbox{the minimum in $\trop(f)$ }
 \mbox{is attained at least twice in $w$}\} \, . 
\]
For an ideal $I \lhd K[x_1, \ldots, x_n]$,
the \emph{tropical variety of} $I$ is given by
$\T(I) \ = \ \bigcap_{f\in I}\T(f)$
or equivalently (if the valuation is nontrivial) by the topological closure
$\T(I)=\overline{\val \V(I)}$
where $\V(I)$ is the subvariety of $I$ in
$(\bar{K}^*)^n$.

In this paper, we continue the study of the following two lines of 
research and in particular advance their connections.
Firstly, \emph{tropical elimination} as developed in
\cite{sturmfels-tevelev-2008,sturmfels-tevelev-yu-2007, 
sturmfels-yu-2008}
(see also 
\cite{emiris-konaxis-palios,esterov-khovanski-2006}) can be seen as tropical
analog of elimination theory. 
Secondly, a natural way to handle an
ideal is by means of a basis, i.e., a finite set of generators.
In the stronger notion of
a \emph{tropical basis} it is additionally required that the set-theoretic
intersection of the tropical hypersurfaces of the generators
coincides with the tropical variety. That is,
a \emph{tropical basis} of the ideal $I$ is a finite generating set 
$\mathcal{F}$ of $I$, such that 
\[ \T(I) \ = \ \bigcap_{f\in \mathcal{F}}\T(f) \, . \] 
The systematic study of tropical bases has been initiated in 
\cite{bjsst,speyer-sturmfels-2004}. 
Recently, by revisiting the regular projection technique from
Bieri and Groves \cite{bg},
Hept and Theobald \cite{hept-theobald-2009} showed that
every tropical variety has a tropical short basis. If $I$ is an $m$-dimensional
prime ideal then for these bases, the projections of $\T(I)$
under a rational projection $\pi : \R^n \to \R^{m+1}$
play a key role (see Proposition~\ref{th:piinvpi}).

In the present paper,
our point of departure is the 
observation that geometrically, the projections 
$\pi(\T(I))$ and the fibers $\pi^{-1}(\pi(\T(I)))$
can be seen as an important special case of tropical elimination.
As a basic ingredient, we start by studying the bases from
the viewpoint of tropical elimination and its relations to
mixed fiber polytopes as recently developed by Sturmfels, Tevelev
and Yu
\cite{sturmfels-tevelev-2008,sturmfels-tevelev-yu-2007, 
sturmfels-yu-2008};
see also \cite{emiris-konaxis-palios,esterov-khovanski-2006}.
In these papers, it is shown that in various
situations the Newton polytope of the polynomial generating this
hypersurface is affinely isomorphic to a mixed fiber polytope.

The contributions of the current paper are as follows.
In Section~\ref{se:projelim}, we extend the study of projections 
of tropical varieties via elimination theory, thus providing
some useful tools for the subsequent sections. In detail,
we show that various properties of the starting ideal $I$ carry 
over to an auxiliary ideal on which elimination is actually applied
(see Lemma~\ref{le:carryover}).

In Section~\ref{se:combinatorics}, we refine the global viewpoint
on the connection of the Newton polytope and fiber polytopes
(as stated above) by studying as well the subdivision of the 
Newton polytope corresponding to $\pi^{-1} \pi \T(I)$. 
We mainly concentrate on the case of complete intersections.
To establish this characterization, we provide
some useful techniques to handle the affine isomorphisms connected
with the mixed fiber polytopes.
The local cells are sums of local mixed 
fiber polytopes (see Theorem~\ref{ko:subdivision}).
Since all these fiber polytopes are only determined up to affine
isomorphisms, we then show how to patchwork the local
mixed fiber polytopes
(Statements~\ref{theo:conv}--\ref{cor:vector}).

These considerations lead to the aspect of self-intersections of
images of tropical varieties under projections (as introduced in Section~\ref{se:selfinter}).
In Section~\ref{se:bounds} we study the (unweighted)
number of self-intersections of a 
tropical curve $\mathcal{C} \subseteq \R^n$ under a rational 
projection $\pi : \R^n \to \R^2$. We give constructions with
many self-intersection points (Theorem~\ref{theo:lower}).
For the class of tropical caterpillar lines 
we can give a tight upper bound (Theorem~\ref{theo:upper}).

\section{Preliminaries\label{se:prelim}}

\subsection{Tropical geometry\label{se:tropical}}

We review some concepts from tropical geometry. As general references
see \cite{gathmann-2006,itenberg-mikhalkin-shustin,rgst}.
Let $K$ be a field with a real valuation as introduced in 
Section~\ref{se:introduction} and let $\T(I)$ be the tropical variety of 
an ideal $I \lhd K[x_1, \ldots, x_n]$.
If $I$ is a prime ideal, then by the Bieri-Groves Theorem
$\T(I)$ is a pure $m$-dimensional polyhedral complex where 
$m=\dim(I)$ is the Krull dimension of the ideal \cite{bg}.

Define the local cone of a point $x$ of a polyhedral complex 
$\Delta\subseteq \RR^n$ as the set
\[LC_x(\Delta) \ := \ \{x+y\in \RR^n\ :\ \exists\ \varepsilon>0 \mbox{ such that } \{x+\rho y\ :\ 0\leq \rho\leq\varepsilon\}\subseteq\Delta\} \, . \]
The tropical variety
$\T(I)$ is \emph{totally concave}, which means that the convex hull of 
each local cone of a point $x$ is an affine subspace (see Figure \ref{convex})
\cite{bg}.

\begin{figure}[ht!]\setlength{\unitlength}{0.8cm}
\begin{center}
\begin{picture}(10,4)
\put(2,2){\color{blue} \vector(0,-1){1}}
\put(2,2){\color{blue}\vector(1,1){1}}
\put(2,2){\color{blue}\vector(-1,1){1}}
\put(1,3){\color{black}\line(-1,-1){1}}
\put(1,3){\color{black}\line(0,1){1}}
\put(3,3){\color{black}\line(1,0){1}}
\put(3,3){\color{black}\line(0,1){1}}
\put(2,1){\color{black}\line(1,0){1}}
\put(2,1){\color{black}\line(-1,-1){1}}
\put(2.4,1.8){\color{black}$x$}
\put(7,2){\color{black}\line(0,-1){2}}
\put(7,2){\color{black}\line(1,1){2}}
\put(7,2){\color{black}\line(-1,1){2}}
\put(7.2,1.8){\color{black}$LC_x(\T(I))$}
\end{picture}
\end{center}\caption{\label{convex}The local cone of a point}
\end{figure}
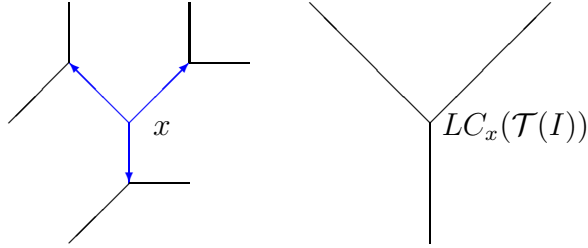

For tropical hypersurfaces $\T(f) := \T(\langle f \rangle)$ it is
well-known that $\T(f)$ is geometrically dual to a privileged 
subdivision of the Newton polytope $\new(f)$.
Namely we lift the points $\alpha$ in the support $\mathcal{A}$ of
$f$ into $\R^{n+1}$
using the valuations of the coefficients $c_{\alpha}$ as lifting values.
The set of those facets of 
$\hat{P}(f):= \text{conv}\{(\alpha,\val(c_{\alpha}))\, | \, \alpha \in \mathcal{A}\}$ which have an outward pointing normal with a negative last coordinate 
is called the \emph{lower hull}. The privileged subdivision is defined
by projecting down this lower hull back 
to $\R^n$ by forgetting the last coordinate. Denote by $C^{\vee}$
the dual cell of a cell $C$ in the tropical hypersurface.

With regard to the intersection of $k$ tropical hypersurfaces in $n$-space
($k \le n$), we use the following notation (see \cite{vigeland-2007}).
Let $f_1, \ldots, f_k \in K[x_1, \ldots, x_n]$,
and set $Y_i:= \mathcal{T}(f_i)$.
The intersection $Y := Y_1 \cap \cdots \cap Y_k$ 
is called \emph{proper} if $\dim Y = n-k$.
In order to study the intersection $Y$, it is useful to consider the
union $U := \bigcup_{i=1}^k Y_k$ as well, since $U$ is a tropical hypersurface,
$U = \T(f_1 \cdots f_k)$, and thus comes with a natural subdivision.

Let $C$ be a non-empty cell of a proper intersection $Y$. 
Then $C$ can be written as
$C = \bigcap_{i=1}^k C_i$, where $C_i$ is a cell of $Y_i$, minimal with $C\subseteq C_i$.
Consider $C$ as a cell of the union $U$. Then the dual cell $C^{\vee}$ of $C$
with regard to $U$ is given by the Minkowski sum
\[
  C^\vee \ = \ C_1^\vee + \dots + C_k^\vee \, .
\]
The intersection $Y$ is called \emph{transversal along $C$} if
\[
\dim(C^\vee) \ = \ \dim(C_1^\vee)+\dots+\dim(C_k^\vee) \, .
\] 
The intersection $Y$ is \emph{transversal} if for each
subset $J \subseteq \{1, \ldots, k\}$ of cardinality at least~2 the
intersection $\bigcap_{j \in J} Y_j$ is proper and transversal along each cell.

\begin{bsp}\label{ex:inter}Let $f_1=x+2y+z-4$, $f_2=3x-y+2z+1$ and the valuation $\val:\QQ\mapsto\RR_\infty$ be the $2$-adic valuation. Then $Y=\T(f_1)\cap\T(f_2)$ is a proper intersection, 
see Figure \ref{proper}.

\ifpictures
\begin{figure}[ht!]
\begin{center}
\hspace*{-5mm}
\begin{minipage}{3cm}\includegraphics[height=3cm]{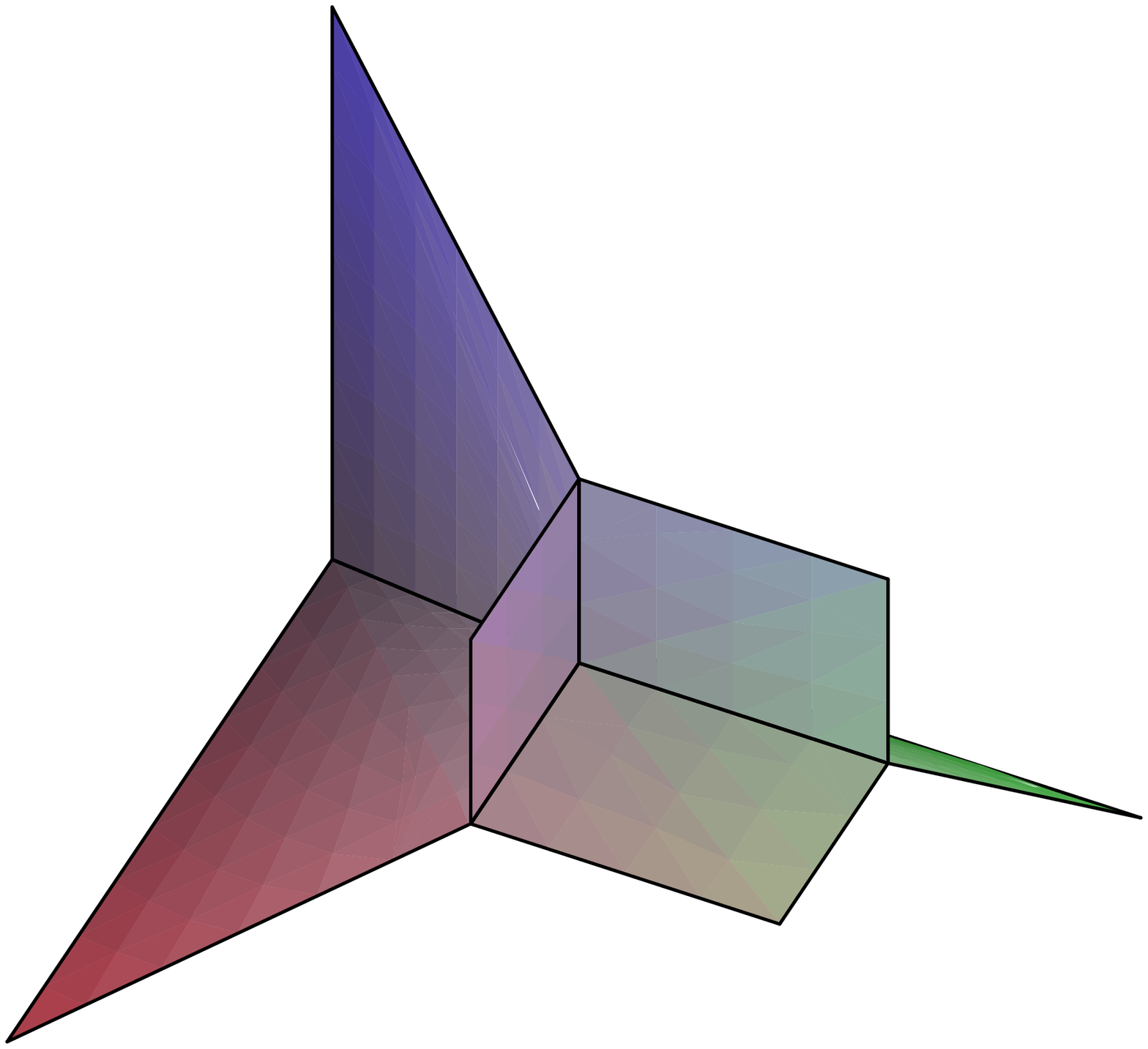}\end{minipage}\hspace*{0.5cm}
\begin{minipage}{1cm}$\bigcap$\end{minipage}\hspace{-0.5cm}
\begin{minipage}{3cm}\includegraphics[height=3cm]{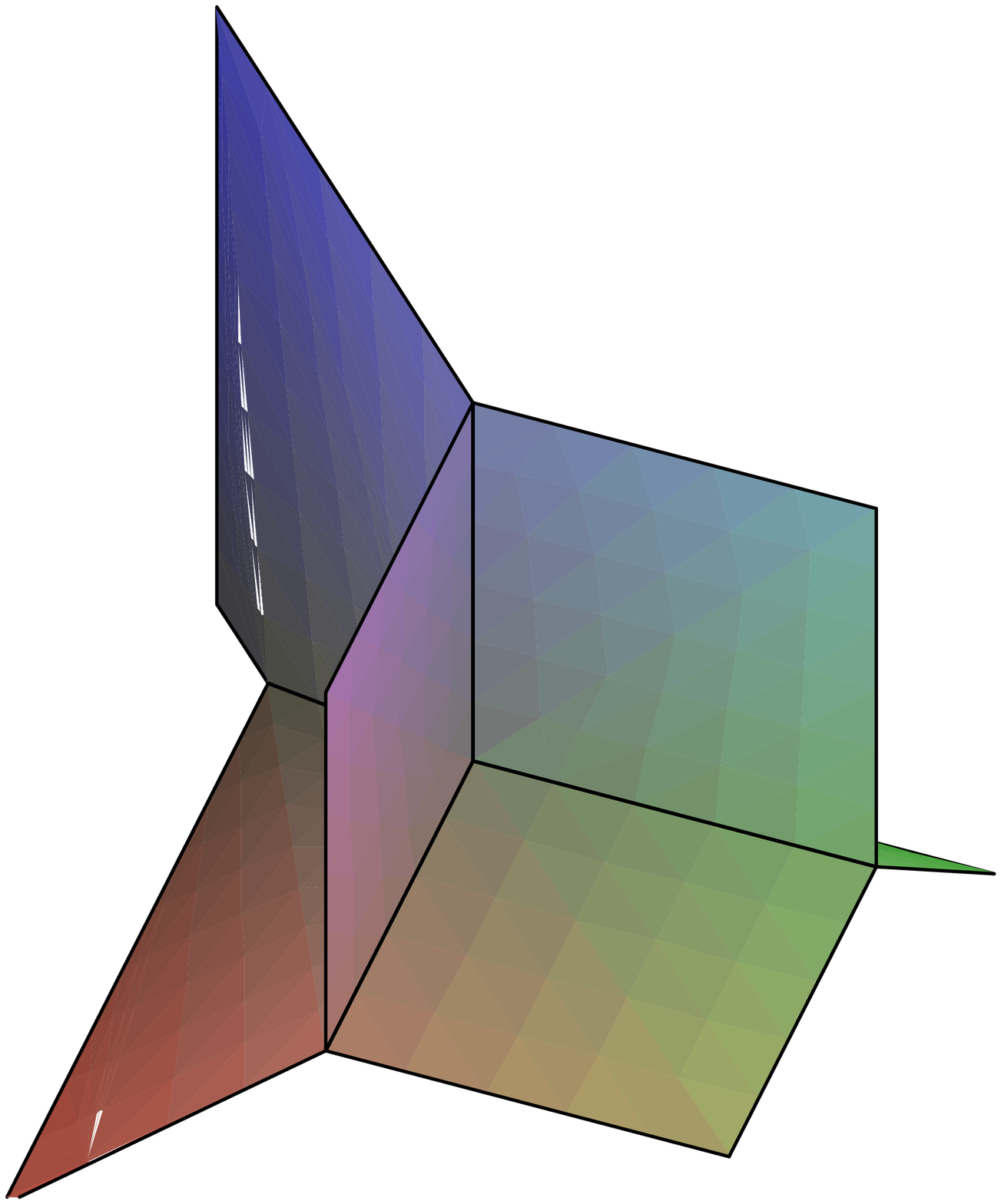}\end{minipage}\hspace*{0.7cm}
\begin{minipage}{1cm}$=$\end{minipage}\hspace*{0.0cm}
\begin{minipage}{3.5cm}\includegraphics[height=3.5cm]{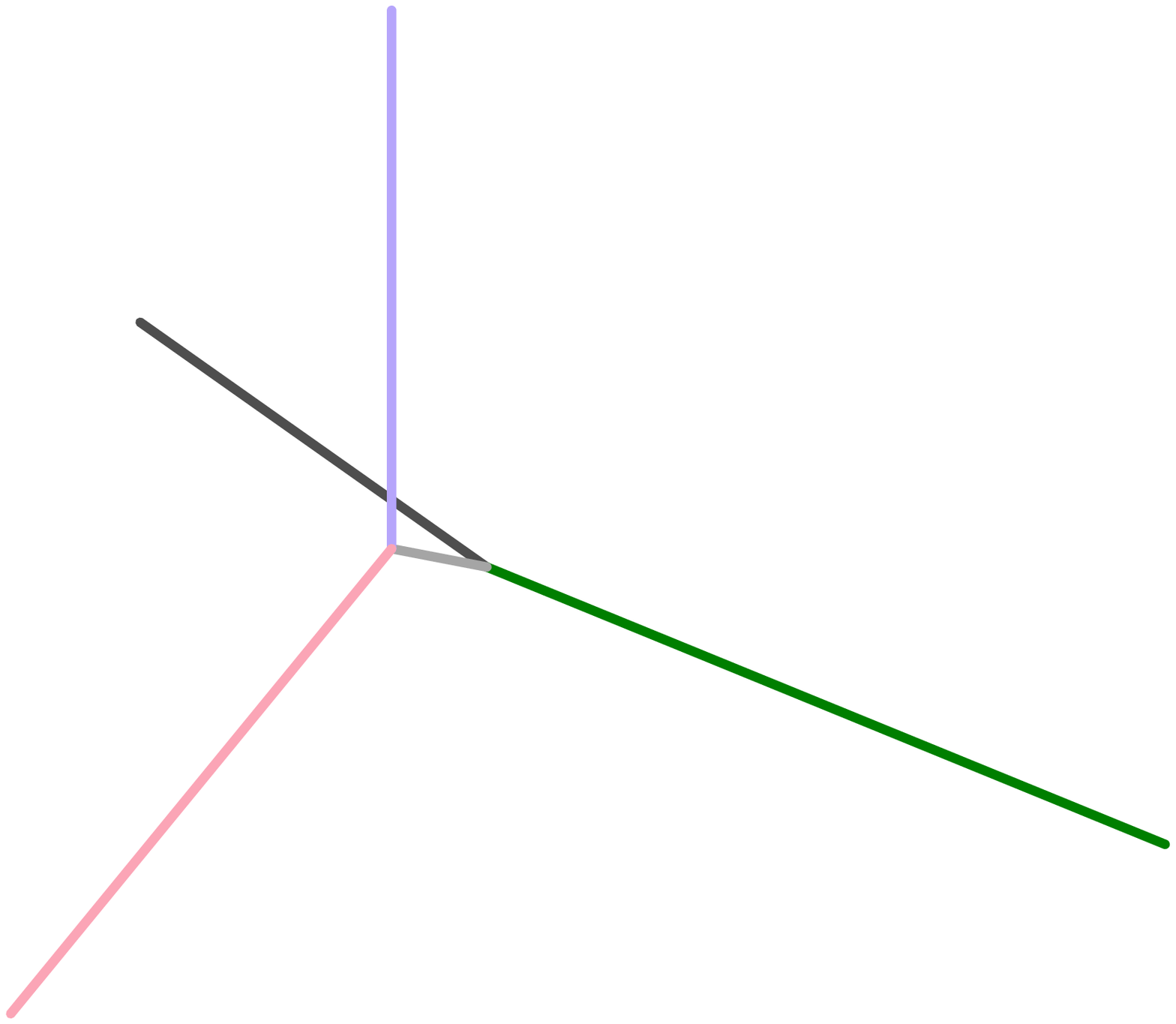}\end{minipage}
\vspace*{-0.8cm}

\begin{minipage}{4cm}
\includegraphics[height=4cm]{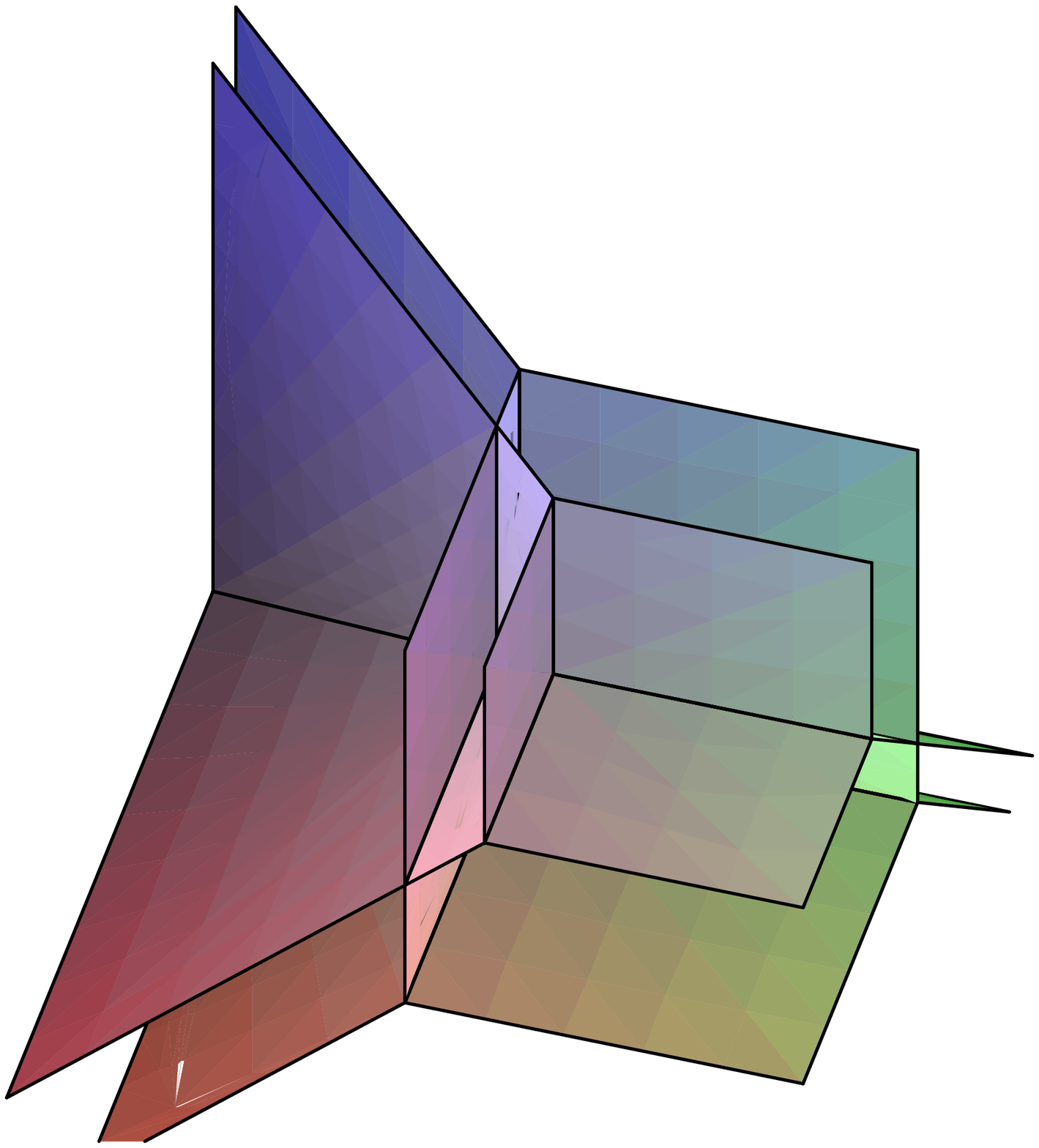}
\end{minipage}\hspace{2cm}
\begin{minipage}{5cm}
\vspace*{1cm}
\includegraphics[height=5cm]{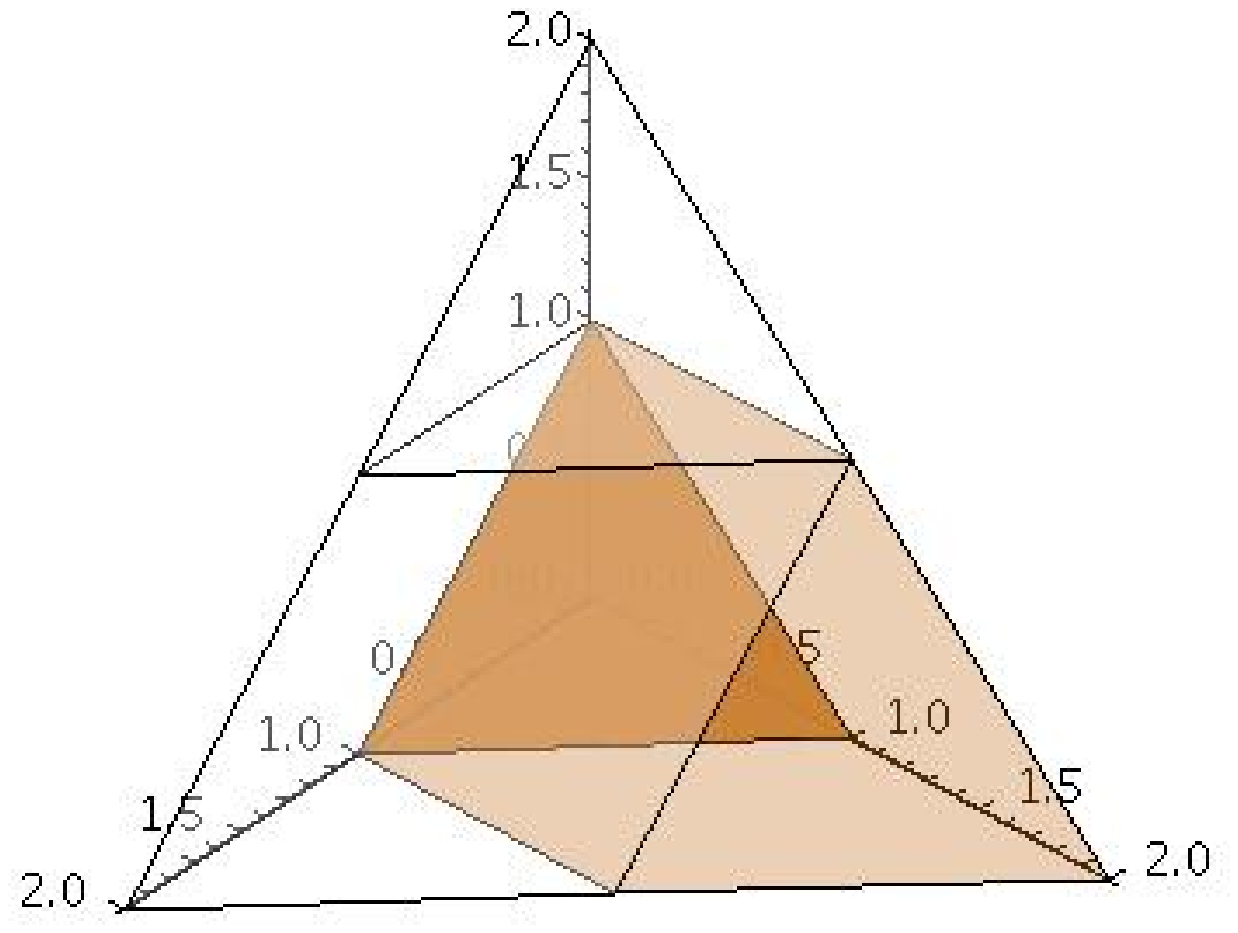}
\end{minipage}
\end{center}
\vspace*{-1.0cm}

 \caption{\label{proper}A proper intersection and the union
of two tropical hypersurfaces $\T(f_1)$ and $\T(f_2)$ in $\R^3$,
and the corresponding subdivision of the Newton polytope
of $\T(f_1 \cdot f_2)$.}
\end{figure}
\fi
\end{bsp}

A proper intersection $Y_1 \cap \cdots \cap Y_k$ is called a \emph{complete intersection} 
if
\[
  \mathcal{T}(\langle f_1, \ldots, f_k \rangle) \ = \
  \mathcal{T}(f_1) \cap \cdots \cap \mathcal{T}(f_k) \, .
\]

The polynomials $f_1, \ldots, f_k \in K[x_1, \ldots, x_n]\setminus\{0\}$
are called \emph{Newton-nondegenerate} 
\cite{bernstein-1975,esterov-khovanski-2006}
if for any collection of faces 
\[A_1\subseteq\new(f_1),\ldots, A_k\subseteq\new(f_k),\] 
such that the sum $A_1+\ldots+A_k$ is at most 
a $(k-1)$-dimensional face of the sum $\new(f_1)+\ldots+\new(f_k)$, 
the restrictions $f_1|_{A_1},\ldots,f_k|_{A_k}$ 
have no common zeros in $(\bar{K}^*)^n$. 
Here $f_i|_{A_i}$ is the sum of terms in $f_i$ with support in $A_i$.
Otherwise we call the polynomials $f_1, \ldots, f_k \in K[x_1, \ldots, x_n]$ 
\emph{Newton-degenerate.}

\subsection{Tropical bases\label{se:tropbases}}

For an ideal $I \lhd K[x_1, \ldots, x_n]$,
a finite generating set $\mathcal{F}$ of $I$ with
$\T(I) \ = \ \bigcap_{f\in \mathcal{F}}\T(f)$ is called a 
\emph{tropical basis} of $I$. Explicit bases are only known
for very special ideals (such as linear ideals or certain Grassmannians, 
see \cite{speyer-sturmfels-2004}).

A starting point for the present paper is the following
result by the authors \cite{hept-diss,hept-theobald-2009}
which guarantees tropical bases of small cardinality and
thus can be seen as a tropical analog to the Eisenbud-Evans-Theorem
from classical algebraic geometry (\cite{Eisenbud-Evans-1973}).
It was obtained by revisiting the regular projection technique of 
Bieri and Groves \cite{bg}.

\begin{prop}
\label{theo:tropbasis}
Let $I \lhd K[x_1,\ldots,x_n]$ be a prime ideal generated by 
the polynomials $f_1, \ldots, f_r$.
Then there exist $g_0,\ldots,g_n \in I$ with
\[  \T(I) \ = \ \bigcap_{i=0}^{n}\T(g_i) \, , \]
and thus $\mathcal{G} := \{f_1, \ldots, f_r, g_0, \ldots, g_n\}$
is a tropical basis for $I$ of cardinality $r+n+1$.
\end{prop}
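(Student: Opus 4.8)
The plan is to realize the tropical variety $\T(I)$ as an intersection of finitely many tropical hypersurfaces coming from carefully chosen rational projections onto $(m+1)$-dimensional coordinate subspaces, following the regular projection technique of Bieri and Groves. First I would invoke the Bieri--Groves Theorem: since $I$ is prime of Krull dimension $m$, the set $\T(I)$ is a pure $m$-dimensional rational polyhedral complex in $\R^n$, and in particular it is a \emph{finite} union of $m$-dimensional (weighted, balanced) polyhedra. The key geometric input is that an $m$-dimensional polyhedral complex in $\R^n$ can be recovered as the intersection of its images under a finite, explicitly bounded collection of rational linear projections $\pi_j : \R^n \to \R^{m+1}$, together with the requirement that the recovered set lies inside $\T(\langle f_1,\dots,f_r\rangle) = \bigcap_{i=1}^r \T(f_i)$. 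Concretely, for a generic rational projection $\pi : \R^n \to \R^{m+1}$ the image $\pi(\T(I))$ is again a pure polyhedral complex of dimension $m$, hence contained in a tropical hypersurface, and by Proposition~\ref{th:piinvpi} (the $\pi^{-1}\pi$ statement referenced in the introduction) the preimage $\pi^{-1}(\pi(\T(I)))$ is itself a tropical variety, indeed a tropical hypersurface after intersecting with $\T(I)$; one then shows a bounded number of such projections suffices to cut out $\T(I)$ on the nose.

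The second step is to count. One always has the trivial inclusion $\T(I) \subseteq \bigcap_{f \in I}\T(f) \subseteq \bigcap_{i=1}^r \T(f_i) =: \T(\langle f_1,\dots,f_r\rangle)$, so the $f_i$ already provide an outer approximation; what remains is to carve away the excess, which is a polyhedral set of dimension $> m$ in general but in any case a closed polyhedral set not equal to $\T(I)$. The projection lemma guarantees that for each "bad" cell of the outer approximation there is a rational projection separating it from $\T(I)$, and by the standard regular-sequence / prime-avoidance style counting argument (the tropical analog of Eisenbud--Evans) one needs at most $n+1$ such projections: each projection kills one "dimension worth" of bad behavior, and $\dim \T(I) = m \le n$ together with the ambient dimension $n$ forces the bound $n+1$. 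For each chosen projection $\pi_j$ one then uses $\pi^{-1}\pi$ to produce a single polynomial $g_j \in I$ with $\T(g_j) \supseteq \pi_j^{-1}(\pi_j(\T(I)))$ and such that $\bigcap_j \T(g_j) \cap \bigcap_i \T(f_i) = \T(I)$; relabeling $g_0,\dots,g_n$ gives the $n+1$ elements, and adjoining the original generators $f_1,\dots,f_r$ (which is free, since they only shrink the intersection and are needed to guarantee membership-generation of $I$) yields the tropical basis $\mathcal{G}$ of cardinality $r+n+1$.

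The main obstacle, and the real content, is establishing that the relevant projections can be taken to be \emph{rational} (so that each $\pi_j^{-1}\pi_j(\T(I))$ is genuinely a tropical variety and hence carries an associated polynomial $g_j \in I$), and that finitely many — in fact at most $n+1$ — of them already separate $\T(I)$ from everything in the outer approximation. Genericity over $\R$ is easy, but one needs a Zariski-density / Baire-category argument showing that rational projections are generic enough, combined with the fact (this is precisely Proposition~\ref{th:piinvpi} and the machinery of \cite{hept-theobald-2009,bg}) that for a rational projection $\pi$ the set $\overline{\pi^{-1}(\pi(\T(I)))}$ equals $\T(J)$ for an ideal $J \subseteq I$ which is principal up to radical, so that a single generator $g_j$ captures it. Once that is in place, the counting is the standard inductive dimension argument: at stage $j$ the partial intersection $\bigcap_i \T(f_i) \cap \bigcap_{\ell \le j}\T(g_\ell)$ is a polyhedral complex whose top-dimensional "excess" cells (those not in $\T(I)$) can be listed finitely, a rational projection $\pi_{j+1}$ is chosen to be injective on each such cell while not collapsing $\T(I)$ onto a smaller set, and $\T(g_{j+1})$ removes them; after $n+1$ steps no excess cell of dimension $\ge 0$ can survive, giving equality. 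I would present this as: (i) Bieri--Groves structure of $\T(I)$; (ii) the rational-projection separation lemma via $\pi^{-1}\pi$; (iii) the inductive count bounding the number of needed projections by $n+1$; (iv) assembling $\mathcal{G}$ and reading off the cardinality $r+n+1$.
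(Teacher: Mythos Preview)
The paper does not prove this proposition; it is quoted from the authors' earlier work \cite{hept-diss,hept-theobald-2009}, so there is no in-paper argument to compare against. Your outline follows the correct broad strategy (Bieri--Groves regular projections combined with the $\pi^{-1}\pi$ characterization of Proposition~\ref{th:piinvpi}), but there are two genuine gaps.

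First, you are proving a weaker statement than what is asserted. The proposition claims $\T(I)=\bigcap_{i=0}^n \T(g_i)$, i.e.\ the $g_i$ \emph{alone} cut out the tropical variety; the generators $f_1,\dots,f_r$ are adjoined only so that $\mathcal{G}$ generates $I$ as an ideal. Your argument, by contrast, uses $\bigcap_i \T(f_i)$ as an outer approximation that the $g_j$ then refine, and concludes only $\bigcap_j \T(g_j)\cap\bigcap_i\T(f_i)=\T(I)$. To get the actual statement you must show that the $n+1$ preimage hypersurfaces already intersect in $\T(I)$ without the help of the $f_i$; this is precisely what the Bieri--Groves ``geometrically independent'' choice of $n+1$ projections accomplishes, and it is not an inductive excess-removal argument of the kind you sketch.

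Second, and relatedly, your counting ``each projection kills one dimension worth of bad behaviour'' is not how the bound $n+1$ arises. In the Bieri--Groves/Hept--Theobald argument the $n+1$ projections are chosen simultaneously and generically so that for every $w\notin\T(I)$ at least one $\pi_j$ separates $w$ from $\T(I)$; the number $n+1$ comes from a covering/position argument in $\R^n$, not from an inductive dimension drop. A minor point: your claim ``$J\subseteq I$'' is ill-posed since $J$ lives in $K[x,\lambda^{\pm}]$; what is true (and what you need) is $J\cap K[x_1,\dots,x_n]\subseteq I$, which follows immediately by specializing $\lambda_j=1$ in any relation $h=\sum a_i\tilde f_i$.
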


The basis elements in 
Theorem~\ref{theo:tropbasis} may have large degrees.
In contrast to this, for linear ideals with constant coefficients
(i.e., $I \lhd \C[x_1, \ldots, x_n]$) the following lower bound 
states that if we want to require that all basis elements are
linear then small cardinality bases do not exist in general
\cite{bjsst,speyer-sturmfels-2004}. For $1\leq d\leq n$
there is a $d$-dimensional linear ideal $I$ in $\C[x_1,\ldots,x_n]$ 
such that any tropical basis of linear forms in $I$ has size
at least $\frac{1}{n-d+1}\binom{n}{d}$. See also
\cite{bjsst,speyer-sturmfels-2004} for Gr\"obner-related techniques
for constructing tropical bases.

\subsection{Mixed fiber polytopes}

Fiber polytopes have been introduced by Billera and Sturmfels 
in \cite{billera-sturmfels-92} (see also \cite{ziegler-95}) 
and generalize the concept of secondary 
polytopes \cite{gelfand-kapranov-zelevinsky-2008}. 
Let $\psi : \R^n \to \R^l$ be a linear map,
let $P$ be an $n$-polytope in $\R^n$,
and $Q$ be an $l$-polytope in $\R^l$ with $\psi(P) = Q$.
The fiber polytope $\Sigma_{\psi}(P)$ is defined as
\[
  \Sigma_{\psi}(P) \ = \ \int_Q (\psi^{-1}(x) \cap P) dx \ \subseteq \ \R^n \, ,
\]
where the integral on the right hand side is a 
Minkowski integral (see \cite{billera-sturmfels-92,ziegler-95}).

For $P_1, \ldots, P_r \subseteq \R^n$ and positive parameters
 $\lambda_1, \ldots, \lambda_r$ we consider the Minkowski sum
\[
  P_{\lambda} \ := \ \lambda_1 P_1 + \cdots + \lambda_r P_r \, .
\]
The fiber polytope $\Sigma_{\psi}(P_{\lambda})$ depends polynomially on the 
parameters $\lambda_1, \ldots, \lambda_r$, homogeneously of
degree $l+1$,
\begin{equation}
  \label{eq:mixed1}
  \Sigma_{\psi} (\lambda_1 P_1 + \cdots + \lambda_r P_r) \ = \
  \sum_{i_1 + \cdots + i_r = l+1} \lambda_1^{i_1} \lambda_2^{i_2} \cdots \lambda_r^{i_r} M_{i_1 \cdots i_r} 
\end{equation}
with some (uniquely determined) polytopes $M_{i_1 \cdots i_r}$.
For the case $r=l+1$, 
the \emph{mixed fiber polytope} $\Sigma_{\psi}(P_1, \ldots, P_r)$
is defined as the coefficient of the monomial $\lambda_1 \cdots \lambda_r$ 
in~\eqref{eq:mixed1},
\[
  \Sigma_{\psi}(P_1, \ldots, P_r) \ := \ M_{1 \cdots 1} \, .
\]
For $P := P_1 = \cdots = P_r$, we obtain (scaled)
ordinary fiber polytopes as special cases of mixed fiber polytopes,
$\Sigma_{\psi}(P, \ldots, P) \ = \ {r!}\ \Sigma_{\psi}(P)$.

Fiber polytopes can be expressed by Minkowski sums and 
formal differences of conventional fiber polytopes 
(see \cite{esterov-khovanski-2006}). For this, define 
a formal subtraction on the semigroup of 
polytopes with the Minkowski summation by 
\[P-Q=R \ :\Longleftrightarrow \ P=Q+R \, . \]
This gives the group of \emph{virtual polytopes}.
With this definition we can state:
\begin{theo} \label{th:alternatingsum}
For any polytopes $P_1, \ldots, P_r \subseteq \R^n$ we have
\begin{equation}
\label{eq:mixedfiber1}
\Sigma_\psi(P_1,\ldots,P_r) \ = \ \sum_{k=1}^n (-1)^{n+k} 
  \sum_{i_1 < \cdots < i_k}
  \Sigma_{\psi}(P_{i_1} + \cdots + P_{i_k}) \, .
\end{equation}
\end{theo}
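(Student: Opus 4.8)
The plan is to prove the identity by polarization, exploiting the polynomiality of $\Sigma_\psi(\lambda_1 P_1 + \cdots + \lambda_r P_r)$ in the parameters $\lambda_i$ (which is recorded in~\eqref{eq:mixed1}), specialized to the case $r = l+1$ that defines the mixed fiber polytope. Write $P_\lambda = \lambda_1 P_1 + \cdots + \lambda_r P_r$. By Billera--Sturmfels the map $\lambda \mapsto \Sigma_\psi(P_\lambda)$ is given by a single polynomial of degree $l+1 = r$, homogeneous of degree $r$ in $\lambda$; hence the coefficient of $\lambda_1\lambda_2\cdots\lambda_r$ can be extracted by the discrete difference operator $\prod_{i=1}^r \bigl(\text{coefficient of } \lambda_i\bigr)$, which on a homogeneous degree-$r$ polynomial coincides with the mixed/polarization formula
\[
  \Sigma_\psi(P_1,\ldots,P_r) \ = \ \frac{1}{r!}\sum_{J\subseteq\{1,\ldots,r\}} (-1)^{r-|J|}\, \Sigma_\psi\!\Bigl(\sum_{j\in J}\lambda_j P_j\Bigr)\Big|_{\lambda_j = 1}.
\]
One then has to convert the $\frac1{r!}$-normalized sum over all subsets (including repeated-index contributions) into the asserted signed sum over strictly increasing index tuples $i_1<\cdots<i_k$.

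**Key steps.** First I would set up the standard polarization identity for a homogeneous form and verify that the integral definition of $\Sigma_\psi$ makes $\Sigma_\psi(P_\lambda)$ genuinely polynomial, so that term-by-term coefficient extraction is legitimate (this is exactly \eqref{eq:mixed1}). Second, I would observe that $\Sigma_\psi\bigl(\sum_{j\in J}\lambda_j P_j\bigr)$ evaluated at all $\lambda_j=1$ is $\Sigma_\psi\bigl(\sum_{j\in J}P_j\bigr)$, and group the subset sum by cardinality $k=|J|$, giving $\sum_{k}\sum_{i_1<\cdots<i_k}(-1)^{r-k}\Sigma_\psi(P_{i_1}+\cdots+P_{i_k})$ up to the normalization. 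Third — and this is the combinatorial heart — I would reconcile the leading constant: in the defining expansion \eqref{eq:mixed1} the monomial $\lambda_1\cdots\lambda_r$ already appears with coefficient $M_{1\cdots 1}=\Sigma_\psi(P_1,\ldots,P_r)$ exactly once, so the polarization formula's $1/r!$ must be absorbed by the multiplicities with which lower-cardinality terms contribute. Concretely, one checks that in $\sum_{J}(-1)^{r-|J|}\Sigma_\psi(\sum_{j\in J}P_j)$ the only surviving ``truly mixed'' contribution, after expanding each $\Sigma_\psi(\sum_{j\in J}P_j)$ via \eqref{eq:mixed1} and collecting, is $r!$ copies of $M_{1\cdots 1}$ plus virtual-polytope cancellations; the upper summation index $n$ (rather than $r$) in \eqref{eq:mixedfiber1} then records that $\Sigma_\psi(\,\cdot\,)$ of a Minkowski sum of polytopes in $\R^n$ is a polynomial of degree at most $n$ (equivalently $l+1\le n$), so terms with $k>n$ vanish identically and the sum may as well run to $n$.

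**Main obstacle and how to handle it.** The delicate point is that $\Sigma_\psi(P_1,\ldots,P_r)$ is an honest polytope only a priori as a virtual polytope: the polarization sum mixes Minkowski sums with formal Minkowski differences, and one must work in the group of virtual polytopes (as the paper sets up just before the statement) to make every intermediate expression meaningful. I would therefore carry out the entire computation in that group, invoking the support-function description — $h_{\Sigma_\psi(P_\lambda)}$ depends polynomially on $\lambda$ for each direction, by the Minkowski-integral formula — so that the identity reduces to the classical scalar polarization identity for homogeneous polynomials applied pointwise to support functions, after which well-definedness of each $\Sigma_\psi(P_{i_1}+\cdots+P_{i_k})$ as a genuine (non-virtual) fiber polytope is automatic since its argument is a genuine polytope. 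The sign $(-1)^{n+k}$ then emerges as $(-1)^{n-k}$ from the polarization of a degree-$n$ form, consistent with the truncation of the degree from $r$ down to the ambient dimension $n$; checking that this truncation introduces no error (i.e.\ that $\Sigma_\psi$ really has degree $\le n$, not just $\le l+1$, in a way compatible with \eqref{eq:mixed1} when $r>n$) is the one spot where I would be most careful.
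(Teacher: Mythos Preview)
Your strategy—using the homogeneity recorded in~\eqref{eq:mixed1} to extract $M_{1\cdots1}$ by an alternating sum over subsets—is precisely the paper's, but your execution contains a normalization error. The formula you display with the prefactor $\tfrac{1}{r!}$ is the polarization identity for the symmetric multilinear form $B$ satisfying $B(P,\ldots,P)=\Sigma_\psi(P)$; the mixed fiber polytope, however, is defined as the \emph{coefficient} $M_{1\cdots1}$ of $\lambda_1\cdots\lambda_r$, which equals $r!\,B(P_1,\ldots,P_r)$, so no $\tfrac{1}{r!}$ should appear. Your proposed repair, that expanding $\sum_J(-1)^{r-|J|}\Sigma_\psi\bigl(\sum_{j\in J}P_j\bigr)$ produces ``$r!$ copies of $M_{1\cdots1}$,'' is false: already for $r=2$ one gets $\Sigma_\psi(P_1+P_2)-\Sigma_\psi(P_1)-\Sigma_\psi(P_2)=M_{11}$, a single copy. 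The alternating sum equals $M_{1\cdots1}$ on the nose, and that \emph{is} the theorem—there is nothing to absorb.

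The paper sidesteps all such bookkeeping. It names the right-hand side $g(P_1,\ldots,P_r)$, notes that $g(\lambda_1P_1,\ldots,\lambda_rP_r)$ is polynomial in the $\lambda_i$, and shows by a short telescoping computation that $g(\{0\},P_2,\ldots,P_r)=0$. By symmetry $g$ vanishes whenever any argument degenerates to a point, so in $g(\lambda_1P_1,\ldots,\lambda_rP_r)$ every monomial missing some $\lambda_i$ has zero coefficient; the sole survivor is $\lambda_1\cdots\lambda_r$, and comparing with~\eqref{eq:mixed1} identifies its coefficient as $\Sigma_\psi(P_1,\ldots,P_r)$. This is the same polarization principle you invoke, packaged so that no multiplicities need tracking. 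Finally, your attempt to justify the upper index $n$ and sign $(-1)^{n+k}$ via a degree bound cannot account for the sign when $n\not\equiv r\pmod 2$; the paper's own telescoping step carries the prefactor $(-1)^{r+1}$, so the $n$ in the displayed statement should be read as $r$.
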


\begin{proof} The proof is analogous to similar
statements on the mixed volume (see, e.g., \cite[Thm.\ IV.3.7]{ewald-1996}).
Denoting the right hand side of~\eqref{eq:mixedfiber1} by $g(P_1, \ldots, P_r)$,
we observe that for $\lambda_1, \ldots, \lambda_r > 0$ the expression
$g(\lambda_1 P_1, \ldots, \lambda_r P_r)$ is a polynomial in 
$\lambda_1, \ldots, \lambda_r$. 
For $P_1=\{0\}$, the definition of $g$ gives a telescoping sum,
\begin{eqnarray*}
 & & (-1)^{r+1}\cdot g(\{0\},P_2,\ldots,P_r) \\
  & = & \sum_{2\leq i\leq r}\Sigma_\psi(P_i) - 
  \left( \sum_{2\leq j\leq r}\Sigma_\psi(\{0\}+P_j)+\sum_{2\leq i< j\leq r}\Sigma_\psi(P_i+P_j) \right) \\
& & + \left( \sum_{2\leq j < k\leq r}\Sigma_\psi(\{0\}+P_j+P_k)  
  +\sum_{2\leq i< j < k \leq r}\Sigma_\psi(P_i+P_j+P_k) \right) \ \pm \ \ldots \\
& = & 0 \, , 
\end{eqnarray*}
where the last equality follows from re-arranging the parenthesis and 
$\Sigma_\psi(P)=\Sigma_\psi(\{0\}+P)$. 
As a consequence, the polynomial
$g(0\cdot P_1,\lambda_2\cdot P_2,\ldots,\lambda_r\cdot P_r)=0$ evaluates
to zero for all $\lambda_2,\ldots,\lambda_r$; i.e., it is the zero polynomial.
Thus, in the polynomial $g(\lambda_1\cdot P_1,\lambda_2\cdot P_2,\ldots,\lambda_r\cdot P_r)$, the coefficients of all monomials $\lambda_{i_1} \cdots \lambda_{i_r}$ 
with $1\notin \{i_1,\ldots,i_r\}$ vanish. By symmetry, this statement also holds
for all terms in which not all indices from $\{1, \ldots, r\}$
occur. Hence, there is only one monomial with nonzero coefficient, 
namely $\lambda_1 \cdots \lambda_r$. 
And this term must coincide with the mixed term in 
$\Sigma_{\psi}(\lambda_1 P_1 + \ldots + \lambda_r P_r)$.
So the corresponding coefficient has to be the mixed fiber polytope 
$\Sigma_{\psi}(P_1, \ldots, P_r)$.
\end{proof}

\section{Projections of tropical varieties via elimination theory\label{se:projelim}}

In this section, our main goal is to study some properties of
projections of tropical varieties from the viewpoint of elimination
theory. As already motivated in Section~\ref{se:tropbases},
we consider a tropical variety $\T(I)$ for some ideal $I \lhd K[x_1, \ldots, x_n]$ and a projection $\pi: \R^n \to \R^{m+1}$,
where $m$ is the dimension of $I$. We always assume that the projection
is \emph{rational}, i.e., that it is of the form $x \mapsto Ax$ with
some rational $(m+1) \times n$-matrix $A$ of rank $m+1$.
Then the (inverse) projection $\pi^{-1}(\pi(\T(I)))$ 
is a tropical variety. 

In terms of elimination theory,
the set $\pi^{-1} \pi(\T(I))$ can be characterized as follows.
Fix a basis $v^{(1)},\ldots, v^{(l)} \in \Z^n$ spanning the kernel 
of the projection $\pi$, where $l:=n-(m+1)$.
For $f\in I$ let
\[
  \tilde{f} \ = \ f(x_1 \prod_{j=1}^l \lambda_j^{v_1^{(j)}}, \ldots, x_n \prod_{j=1}^l \lambda_j^{ v_n^{(j)}}) \, .
\]
Then the ideal 
$J \lhd K[x_1, \ldots, x_n, \lambda_1^{\pm}, \ldots, \lambda_l^{\pm}]$
is defined by 
\[J \ := \ \langle \tilde{f}:~f\in I\rangle \ \lhd \ 
  K[x_1,\ldots,x_n,\lambda_1^{\pm},\ldots,\lambda_l^{\pm}] \, . 
\]
It can easily be checked that $J$ is generated by 
$\tilde{f}_1,\ldots,\tilde{f}_s$.
The following characterization was shown in \cite{hept-theobald-2009}.

\begin{prop}
\label{th:piinvpi}
Let $I \lhd K[x_1, \ldots, x_n]$ be an $m$-dimensional prime ideal and
$\pi : \RR^n \to \RR^{m+1}$ be a rational projection.
\begin{enumerate}
\item[(a)] Then $\pi^{-1} \pi(\T(I))$ is a tropical variety with
\begin{equation}
\label{eq:piinversepi}
  \pi^{-1} \pi(\T(I)) \ = \  \T(J \cap K[x_1, \ldots, x_n]) \, .
\end{equation}
\item[(b)]
  If $\pi(\T(I))$ is $m$-dimensional then 
  $\pi^{-1} \pi(\T(I))$ is a tropical hypersurface.
\end{enumerate}
\end{prop}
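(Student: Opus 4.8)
The plan is to deduce both parts from the elimination-theoretic description in~\eqref{eq:piinversepi} together with the general principle that tropical varieties of ideals in a Laurent polynomial ring behave well under coordinate monomial substitutions and projections of the ambient torus. For part~(a), the key observation is that the monomial substitution defining $\tilde f$ from $f$ corresponds on the level of varieties to the automorphism of $(\bar K^*)^{n+l}$ that sends $(x,\lambda)$ to $(x_1\prod_j\lambda_j^{v_1^{(j)}},\ldots)$, and that the image of $\V(J)$ under the projection forgetting the $\lambda$-coordinates is exactly $\V(J\cap K[x_1,\ldots,x_n])$ by elimination. On the tropical side, forgetting the $\lambda$-coordinates corresponds to a linear projection $\R^{n+l}\to\R^n$, and under the valuation map this projection of $\overline{\val\,\V(J)}$ equals $\overline{\val\,\V(J\cap K[x])}=\T(J\cap K[x])$. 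Chasing the diagram, the effect of the $\lambda$-substitution followed by this projection on $\val\,\V(I)\subseteq\R^n$ is precisely to translate each point $w\in\val\,\V(I)$ by the full lattice spanned by $v^{(1)},\ldots,v^{(l)}$, i.e.\ by $\ker\pi$; taking closures this yields $\pi^{-1}\pi(\T(I))$. (When the valuation is trivial one argues directly from $\T(I)=\bigcap_{f\in I}\T(f)$ and the fact that $\trop(\tilde f)$ is invariant under adding elements of $\ker\pi$.) The statement that $\pi^{-1}\pi(\T(I))$ is a tropical variety is then immediate from~\eqref{eq:piinversepi}, since the right-hand side is the tropical variety of an ideal in $K[x_1,\ldots,x_n]$; I would record that it is cut out set-theoretically by a tropical basis of that elimination ideal.

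For part~(b), I would argue by a dimension count. Since $I$ is prime of dimension $m$, the variety $\T(I)$ is a pure $m$-dimensional polyhedral complex (Bieri--Groves). The fibers of $\pi$ over points of $\pi(\T(I))$ are $l$-dimensional affine subspaces (translates of $\ker\pi$), so $\pi^{-1}\pi(\T(I))$ has dimension $\dim\pi(\T(I))+l$. Under the hypothesis $\dim\pi(\T(I))=m$, this equals $m+l=n-1$. By part~(a), $\pi^{-1}\pi(\T(I))=\T(J\cap K[x_1,\ldots,x_n])$ is the tropical variety of an ideal in $K[x_1,\ldots,x_n]$, and it is of pure dimension $n-1$; the elimination ideal $J\cap K[x_1,\ldots,x_n]$ is prime (as $J$, being generated by the images of generators of the prime ideal $I$ under an automorphism of the Laurent ring composed with an inclusion, is prime, and contraction of a prime ideal is prime), hence of Krull dimension $n-1$. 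A prime ideal of dimension $n-1$ in $K[x_1,\ldots,x_n]$ is principal — it is the vanishing ideal of an irreducible hypersurface, generated by a single irreducible polynomial $g$ — and therefore $\pi^{-1}\pi(\T(I))=\T(g)$ is a tropical hypersurface.

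The main obstacle I anticipate is making the diagram chase in part~(a) fully rigorous: one must keep careful track of the interplay between the torus automorphism given by the $\lambda$-substitution, the elimination (projection) step, and the topological closures in the definition $\T(I)=\overline{\val\,\V(I)}$, and verify that closure commutes with the relevant projections here (which it does because rational projections of polyhedral complexes are closed and $\val\,\V(I)$ has closure $\T(I)$). A secondary point requiring care is the degenerate case of trivial valuation, where one falls back on the defining intersection $\bigcap_{f}\T(f)$ and checks directly that $w\in\pi^{-1}\pi\T(I)$ iff every $\tilde f$ attains its tropical minimum at least twice at $(w,0)$, using that $\ker\pi$-translation invariance of $\trop(\tilde f)$ in the $x$-variables is built into the construction. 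Once these closure and primeness facts are in place, both parts follow cleanly.
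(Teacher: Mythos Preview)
The paper does not contain its own proof of this proposition: it is quoted from the authors' earlier work \cite{hept-theobald-2009} (see the sentence immediately preceding the statement). So there is no in-paper argument to compare against, and your proposal should be read as a standalone reconstruction.

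Your outline is essentially correct. For part~(a), the description of $\T(J)$ you implicitly use is the right one: since $\trop(\tilde f)(w,\mu)=\trop(f)\bigl(w+\sum_j\mu_j v^{(j)}\bigr)$, one has $\T(J)=\{(w,\mu):w+V\mu\in\T(I)\}$, and projecting off the $\mu$-coordinates gives $\T(I)+\ker\pi=\pi^{-1}\pi(\T(I))$. The step you flag as the obstacle---that the coordinate projection of $\T(J)$ equals $\T(J\cap K[x_1,\ldots,x_n])$---is exactly the tropical elimination theorem of Sturmfels and Tevelev, and you should cite it explicitly rather than leave it as a diagram chase; the closure issues you worry about are absorbed into that theorem. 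One small correction: you write ``the full lattice spanned by $v^{(1)},\ldots,v^{(l)}$'', but since the $\lambda_j$ range over $\bar K^*$ with dense value group, the translates sweep out (after closure) the full real subspace $\ker\pi$, not just the lattice. For part~(b) your argument is clean and correct: $J$ is prime because it is the image under a ring automorphism of the extension $I\cdot K[x,\lambda^{\pm}]$, which is prime since $(K[x]/I)[\lambda^{\pm}]$ is a domain; the contraction $J\cap K[x]$ is then prime of Krull dimension $n-1$ by Bieri--Groves, hence principal in the UFD $K[x_1,\ldots,x_n]$.
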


Geometrically, a polynomial $f$ and its corresponding polynomial 
$\tilde{f}$ are related by the following simple observation:

\begin{observation}
\label{ob:observation}
Let $f \in K[x_1, \ldots, x_n]$ and $z = (z_1, \ldots, z_n) \in V(f)$.
If $(w_{n+1}, \ldots, w_{n+l}) \in \R^l$ 
and $y$ denotes an element of valuation~1, then the point
\[
  \bar{z} \ := \ \left( \frac{z_1}{y^{\sum_{j=1}^l v_1^{(j)} w_{n+j}}}, \ldots,
                            \frac{z_n}{y^{\sum_{j=1}^l v_n^{(j)} w_{n+j}}},
                            y^{w_{n+1}}, \ldots, y^{w_{n+l}} \right) \in (K^*)^{n+l}
\]
is contained in $V(\tilde{f})$ with 
$\val(\overline{z}_i) = \val z_i - \sum_{j=1}^l v_i^{(j)} w_{n+j}$, $1 \le i \le n$.
In particular, the first $n$ components
of $\bar{z}$ are of the form
$(\frac{z_1}{y^{b_1}}, \ldots, \frac{z_n}{y^{b_n}})$ with 
$b \in \ker \pi$. 
\end{observation}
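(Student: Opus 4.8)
The plan is to verify the claimed membership $\bar z \in V(\tilde f)$ by a direct substitution, and then read off the valuation statement from the explicit coordinates. First I would recall how $\tilde f$ arises: if $f = \sum_\alpha c_\alpha x^\alpha$, then by definition $\tilde f = f(x_1 \prod_j \lambda_j^{v_1^{(j)}}, \ldots, x_n \prod_j \lambda_j^{v_n^{(j)}}) = \sum_\alpha c_\alpha \prod_{i=1}^n \bigl(x_i \prod_{j=1}^l \lambda_j^{v_i^{(j)}}\bigr)^{\alpha_i}$. So $\tilde f$ is obtained from $f$ by the monomial substitution $x_i \mapsto x_i \prod_j \lambda_j^{v_i^{(j)}}$, and evaluating $\tilde f$ at the point $\bar z = (\bar z_1, \ldots, \bar z_n, y^{w_{n+1}}, \ldots, y^{w_{n+l}})$ amounts to evaluating $f$ at the point whose $i$-th coordinate is $\bar z_i \cdot \prod_{j=1}^l (y^{w_{n+j}})^{v_i^{(j)}} = \bar z_i \cdot y^{\sum_{j=1}^l v_i^{(j)} w_{n+j}}$.

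The key computation is then simply that $\bar z_i \cdot y^{\sum_{j=1}^l v_i^{(j)} w_{n+j}} = \frac{z_i}{y^{\sum_{j=1}^l v_i^{(j)} w_{n+j}}} \cdot y^{\sum_{j=1}^l v_i^{(j)} w_{n+j}} = z_i$ for each $i = 1, \ldots, n$; that is, the substitution $\lambda_j \mapsto y^{w_{n+j}}$, $x_i \mapsto \bar z_i$ sends the point $\bar z$ exactly back to $z = (z_1, \ldots, z_n)$ in the first $n$ coordinates on which $f$ depends. Since $z \in V(f)$, we get $\tilde f(\bar z) = f(z) = 0$, hence $\bar z \in V(\tilde f)$. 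One should also note $\bar z \in (K^*)^{n+l}$: the last $l$ coordinates $y^{w_{n+j}}$ lie in $\bar K^*$ because $y$ has valuation $1$ (in particular $y \neq 0$), and the first $n$ coordinates are nonzero because $z \in V(f)$ is taken in the torus $(\bar K^*)^n$, so $z_i \neq 0$ and dividing by a power of the nonzero element $y$ keeps it nonzero. Strictly speaking one needs $w_{n+j}$-th powers of $y$ to make sense; this is the usual convention (picking $y$ of valuation $1$ and using real exponents, as already in play for tropicalizations with real lifting values), and I would just invoke it rather than belabour it.

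For the valuation assertion, apply $\val$ to the explicit $i$-th coordinate: $\val(\bar z_i) = \val(z_i) - \val\bigl(y^{\sum_{j=1}^l v_i^{(j)} w_{n+j}}\bigr) = \val(z_i) - \sum_{j=1}^l v_i^{(j)} w_{n+j}$, using additivity of the valuation on products and $\val(y) = 1$. This is exactly the displayed formula. Finally, the last sentence of the statement is a reformulation: setting $b_i := \sum_{j=1}^l v_i^{(j)} w_{n+j}$, the first $n$ components of $\bar z$ are $(z_1/y^{b_1}, \ldots, z_n/y^{b_n})$, and $b = \sum_{j=1}^l w_{n+j}\, v^{(j)}$ is a real-linear combination of the kernel basis vectors $v^{(1)}, \ldots, v^{(l)}$, hence $b \in \ker\pi$. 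I do not expect a genuine obstacle here; the only mildly delicate point is to be clean about working with a fixed $y$ of valuation $1$ and real exponents so that all the monomials $y^{w_{n+j}}$ and $y^{b_i}$ are well-defined elements of $\bar K^*$, and then the verification is a one-line substitution.
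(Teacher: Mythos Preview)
Your proposal is correct. The paper actually does not supply a proof for this statement at all: it is presented as a ``simple observation'' and left to the reader, so your direct substitution argument---evaluating $\tilde f$ at $\bar z$ and watching the $y$-powers cancel to recover $f(z)=0$, then reading off the valuation and the kernel membership---is exactly the intended verification.
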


The next statement shows that many interesting properties carry over 
from $I$ to $J$.

\begin{lemma}\label{le:carryover}
Given polynomials $f_1, \ldots, f_{n-m} \in K[x_1, \ldots, x_n]$, let
$Y = \bigcap_{i=1}^{n-m} \T(f_i)$ and 
$\tilde{Y} = \bigcap_{i=1}^{n-m} \T(\tilde{f}_i)$.
\begin{enumerate}
\item[(a)] If the intersection is a proper intersection 
then the intersection $\tilde{Y}$ is a proper intersection.
\item[(b)] If the intersection is transversal then the intersection
  $\tilde{Y}$
is transversal.
\item[(c)] If $f_1, \ldots, f_{n-m}$ are New\-ton-nondegenerate
  then $\tilde{f}_1, \ldots, \tilde{f}_{n-m}$ are New\-ton-non\-de\-gen\-erate.
\item[(d)] If the intersection is complete then the intersection 
  $\tilde{Y}$ is complete.
\end{enumerate}
\end{lemma}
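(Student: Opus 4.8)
The key observation throughout is that the substitution $f \mapsto \tilde f$ is a monomial change of coordinates in the enlarged torus: setting $g(x_1,\dots,x_n,\lambda_1,\dots,\lambda_l) := x_i \prod_j \lambda_j^{v_i^{(j)}}$, the map $\Phi : (\bar K^*)^{n+l} \to (\bar K^*)^n$ given by $\Phi(x,\lambda)_i = x_i\prod_j\lambda_j^{v_i^{(j)}}$ is a surjective homomorphism of algebraic tori, and $\tilde f = f\circ\Phi$. On the tropical side, $\Phi$ tropicalizes to the linear map $L : \R^{n+l}\to\R^n$ with $L(w,u)_i = w_i + \sum_j v_i^{(j)}u_{n+j}$, which is surjective with kernel of dimension $l$, and $\T(\tilde f) = L^{-1}(\T(f))$; more generally $\tilde Y = L^{-1}(Y)$ and, by Proposition~\ref{th:piinvpi}(a) applied to principal ideals (or directly), $\new(\tilde f_i)$ is the image of $\new(f_i)$ under the injective \emph{dual} linear map $L^{\vee}$ induced on the Newton-polytope side by the exponent substitution $\alpha \mapsto (\alpha, \langle v^{(1)},\alpha\rangle,\dots,\langle v^{(l)},\alpha\rangle)$. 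I would first record these three facts (torus homomorphism, tropicalization is $L^{-1}$, Newton polytopes transform by an affine-linear injection) as a short preliminary paragraph, since all four parts follow from them.

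For part~(a): since $L$ is a linear surjection with $l$-dimensional kernel, $\dim L^{-1}(Y) = \dim Y + l$ for any nonempty polyhedral complex $Y$, and similarly each $L^{-1}(\T(f_i))$ has dimension $(n-1)+l = (n+l)-1$, i.e.\ it is a hypersurface in $\R^{n+l}$. If $Y$ is proper, $\dim Y = n-(n-m)=m$, so $\dim\tilde Y = m+l = (n+l)-(n-m)$, which is exactly the properness condition for $n-m$ hypersurfaces in $(n+l)$-space. For part~(b): dual cells. If $C = \bigcap C_i$ is a cell of $\tilde Y$ with each $C_i$ a minimal cell of $\T(\tilde f_i)$ containing it, then because $\tilde Y = L^{-1}(Y)$ and the subdivision of $\T(\tilde f_i)$ is the preimage under $L$ of the subdivision of $\T(f_i)$, one has $C_i = L^{-1}(D_i)$ for the corresponding cell $D_i$ of $\T(f_i)$; the dual cell $C_i^\vee$ is then $L^\vee(D_i^\vee)$, an affine-linear \emph{injective} image, so $\dim C_i^\vee = \dim D_i^\vee$, and likewise $\dim C^\vee = \dim D^\vee$ where $D = \bigcap D_i$ is the corresponding cell of $Y$. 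Transversality of $Y$ along $D$, i.e.\ $\dim D^\vee = \sum\dim D_i^\vee$, therefore transfers verbatim to transversality of $\tilde Y$ along $C$; running this over all index subsets $J$ gives transversality. Part~(d) then follows from (a), (b), and Proposition~\ref{th:piinvpi} together with the observation that $\T(\langle\tilde f_1,\dots,\tilde f_{n-m}\rangle) = \T(J') $ where $J' = \langle \tilde f_i\rangle$ is the image ideal under $\Phi^*$: pulling back along the torus homomorphism commutes with forming tropical varieties of ideals (since $\Phi$ is dominant, $\V(J')=\Phi^{-1}\V(\langle f_i\rangle)$ and tropicalization is $L^{-1}$), hence $\T(\langle\tilde f_i\rangle) = L^{-1}\T(\langle f_i\rangle) = L^{-1}(Y_1\cap\cdots\cap Y_{n-m}) = \tilde Y_1\cap\cdots\cap\tilde Y_{n-m}$, which is exactly completeness of $\tilde Y$.

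For part~(c), Newton-nondegeneracy: I would argue by a bijection of faces. Every face $\tilde A_i$ of $\new(\tilde f_i) = L^\vee(\new(f_i))$ is of the form $L^\vee(A_i)$ for a unique face $A_i$ of $\new(f_i)$, because $L^\vee$ is an affine-linear injection and faces are detected by linear functionals (precompose the functional defining $\tilde A_i$ with $L^\vee$). Moreover $\tilde A_1 + \cdots + \tilde A_{n-m} = L^\vee(A_1 + \cdots + A_{n-m})$ and, $L^\vee$ being injective-affine-linear, $\dim(\tilde A_1+\cdots+\tilde A_{n-m}) = \dim(A_1+\cdots+A_{n-m})$, and it is a face of $\new(\tilde f_1)+\cdots+\new(\tilde f_{n-m}) = L^\vee(\new(f_1)+\cdots+\new(f_{n-m}))$ exactly when $A_1+\cdots+A_{n-m}$ is a face of $\new(f_1)+\cdots+\new(f_{n-m})$. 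Finally $\tilde f_i|_{\tilde A_i} = (f_i|_{A_i})\circ\Phi$, so a common zero of the $\tilde f_i|_{\tilde A_i}$ in $(\bar K^*)^{n+l}$ maps under $\Phi$ (which is surjective onto $(\bar K^*)^n$) to a common zero of the $f_i|_{A_i}$ in $(\bar K^*)^n$. Hence a degenerate collection of faces for the $\tilde f_i$ produces one for the $f_i$; contrapositively, Newton-nondegeneracy of $f_1,\dots,f_{n-m}$ forces it for $\tilde f_1,\dots,\tilde f_{n-m}$.

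\textbf{Main obstacle.} The conceptual content is light; the real care goes into part~(c), specifically verifying cleanly that the face-lattice bijection $A_i \leftrightarrow L^\vee(A_i)$ interacts correctly with Minkowski sums and with the ``at most $(k-1)$-dimensional face of the sum'' condition — i.e.\ that taking preimages/images under $L^\vee$ neither creates nor destroys the relevant faces and preserves their dimensions. One must also be slightly careful that $\Phi$ genuinely surjects onto $(\bar K^*)^n$ (it does, since the $v^{(j)}$ only need to be primitive enough to span $\ker\pi\cap\Z^n$ as a saturated sublattice — if not, one works with the torus quotient, which does not affect existence of common zeros upstairs). A secondary technical point is justifying ``tropicalization of a torus homomorphism is the induced linear map on cocharacter lattices, and it commutes with $\T(-)$ of ideals'', which is standard but should be cited (e.g.\ from the elimination-theory references) rather than reproved.
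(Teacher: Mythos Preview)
Your proposal is correct and follows essentially the same route as the paper: both arguments rest on the injective affine-linear map $\alpha\mapsto(\alpha,\langle v^{(1)},\alpha\rangle,\dots,\langle v^{(l)},\alpha\rangle)$ on exponent vectors (your $L^\vee$), which induces the bijection of faces/dual cells used for (a)--(c), and on the explicit point-lifting $(z,\lambda)\mapsto z$ along the torus map $\Phi$ for (c) and (d).

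The only noteworthy difference is in packaging. You phrase everything through the surjective torus homomorphism $\Phi$ and its tropicalization $L$, deducing (d) from the general principle ``$\T$ commutes with pullback along dominant torus maps,'' which you flag as something to be cited. The paper instead proves exactly this commutation by hand via Observation~\ref{ob:observation}: given $w\in\bigcap\T(\tilde f_i)$, it passes to the corresponding $u\in\bigcap\T(f_i)=\T(\langle f_i\rangle)$, lifts to an algebraic point $z\in V(f_1,\dots,f_{n-m})$, and then writes down the explicit preimage $\bar z\in V(\tilde f_1,\dots,\tilde f_{n-m})$ with the prescribed valuation. So the ``standard fact'' you would cite is precisely what the paper establishes in Observation~\ref{ob:observation}; your approach is the same argument abstracted one level up. (Your parenthetical that (d) ``follows from (a), (b), and Proposition~\ref{th:piinvpi}'' is misleading---drop it; the actual content, which you give immediately after, is the pullback-commutes-with-$\T$ statement.)
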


Let $\rho : \RR^{n+l} \to \RR^n$ be the projection forgetting the last $l$ coordinates,
and let $\rho^{\circ} : \RR^{n+l} \to \RR^l$ be the projection onto the last $l$ coordinates
(i.e., the map forgetting the first $n$ coordinates). 

\begin{proof}
The set of vectors $\alpha$ in the support of $f_i$ is in 1-1-correspondence
with the set of vectors in the support of $\tilde{f}_i$ by
the injective linear mapping
\[
  (\alpha_1, \ldots, \alpha_n) \ \mapsto \ (\alpha_1, \ldots, \alpha_n,
  \sum_{i=1}^n v^{(1)}_i \alpha_i, \ldots, \sum_{i=1}^n v^{(l)}_i \alpha_i)
\, .
\]
This mapping implies a canonical 1-1-correspondence between the dual
cells of $f_i$ and $\tilde{f}_i$. Statements (a) and (b) of the theorem
are an immediate consequence of this observation.

In order to prove (c)
let $\tilde{A}_i\subseteq\new(\tilde{f}_i)$, $i=1,\ldots,n-m$, be a collection
of faces such that the sum $\tilde{A}_1+\dots+\tilde{A}_{n-m}$ is at most
an $(n-m-1)$-dimensional face of
$\new(\tilde{f}_1)+\dots+\new(\tilde{f}_{n-m})$. Then any common zero
$(c_1,\ldots,c_{n+l})\in(\bar{K}^*)^{n+l}$ induces a common zero
\[
  \Big(c_1\prod_{j=1}^l c_{n+j}^{v_1^{(j)}},\ldots,c_n\prod_{j=1}^l
c_{n+j}^{v_n^{(j)}}\Big)\in(\bar{K}^*)^n
\] of
$\rho(\tilde{A}_1)=A_1,\ldots,\rho(\tilde{A}_{n-m})=A_{n-m}$ which are faces
of $\new(f_i)$ such that $\sum_{i=1}^{n-m} A_i$ is at most an
$(n-m-1)$-dimensional face of $\sum_{i=1}^{n-m}\new(f_i)$. This proves the
assertion.

Concerning (d), let $w \in \bigcap_{i=1}^{n-m} \T(\tilde{f}_i)$.
By definition of the $\tilde{f}_i$ there exists a point
$u \in \bigcap_{i=1}^{n-m} \mathcal{T}(f_i)$ with
$u_i = (w_i + \sum_{j=1}^l v_i^{(j)} w_{n+j})$, $1 \le i \le n$.
Since the intersection $Y$ is complete, 
$u \in \mathcal{T}(\langle f_1, \ldots, f_{n-m}\rangle)$.
Further, since $\bigcap_{i=1}^{n-m} \T(\tilde{f}_i)$ is closed,
we can assume without loss of generality that there exists
a point $z = (z_1, \ldots, z_n) \in V(f_1, \ldots, f_{n-m})$
with $\val(z_i) = w_i + \sum_{j=1}^l v_i^{(j)} w_{n+j}$.
If $y$ denotes an element of valuation~1, then the point $\bar{z}$
defined in Observation~\ref{ob:observation} satisfies
$\val(\bar{z}) = (w_1, \ldots, w_n,w_{n+1}, \ldots, w_{n+l})$
and $\bar{z} \in V(\tilde{f}_1, \ldots, \tilde{f}_{n-m})$.
Thus $w \in \T(\langle \tilde{f}_1, \ldots, \tilde{f}_{n-m} \rangle)$,
which completes the proof.
\end{proof}

\section{Combinatorics of projections of tropical varieties\label{se:combinatorics}}

\subsection{The dual subdivision}

Let $\pi:\RR^n\to\RR^{m+1}$ be again a projection represented by the matrix $A$ and $I\lhd\  K[x_1,\ldots,x_n]$ be an $m$-dimensional ideal. Assume that 
$\pi^{-1} \pi(\T(I))$ is a tropical hypersurface.
To describe the Newton polytope of a polynomial 
$f$ with $\pi^{-1} \pi(\T(I)) = \T(f)$
we consider a complementary linear map
\[\pi^\circ:\RR^n \ \to \ \RR^{n-m-1}=\RR^l\]
whose kernel is the rowspace of $A$. Then the following theorem was
shown by Esterov and Khovanski \cite{esterov-khovanski-2006}
and Sturmfels, Tevelev, and Yu 
(\cite{sturmfels-tevelev-2008,sturmfels-tevelev-yu-2007} and \cite[Thm. 4.1]{sturmfels-yu-2008}).

\begin{prop}\label{theo:styu}
Let $f_1, \ldots, f_{n-m} \in K[x_1, \ldots, x_n]$ be Newton-nondegenerate polynomials
and let the intersection $\bigcap_{i=1}^{n-m} \T(f_i)$ be complete.
Then the Newton polytope of $\pi^{-1} \pi(\T(\langle f_1,\dots,f_{n-m}\rangle))$ is affinely isomorphic to the mixed fiber polytope $\Sigma_{\pi^\circ}(\new(f_1),$ $\ldots,$ $\new(f_{n-m}))$.
\end{prop}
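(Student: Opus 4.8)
The plan is to reduce the statement to the already-known description of Newton polytopes of tropicalized eliminants and then transport everything back along the natural coordinate change $f_i \mapsto \tilde f_i$. First I would set $F_i := \tilde f_i \in K[x_1,\dots,x_n,\lambda_1^\pm,\dots,\lambda_l^\pm]$ and $J = \langle F_1,\dots,F_{n-m}\rangle$, and invoke Proposition~\ref{th:piinvpi}(a) to identify $\pi^{-1}\pi(\T(\langle f_1,\dots,f_{n-m}\rangle))$ with $\T(J\cap K[x_1,\dots,x_n])$, i.e.\ with the tropical hypersurface obtained by eliminating the variables $\lambda_1,\dots,\lambda_l$ from $J$. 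The point of Section~\ref{se:projelim} is precisely that the hypotheses are inherited: by Lemma~\ref{le:carryover}(c)--(d) the polynomials $F_1,\dots,F_{n-m}$ are again Newton-nondegenerate and the intersection $\bigcap_i \T(F_i)$ is again complete. So the situation is exactly the one handled by the elimination-theoretic results of Esterov--Khovanski and Sturmfels--Tevelev--Yu, applied to the projection $\rho^\circ:\R^{n+l}\to\R^l$ forgetting the first $n$ coordinates.

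Next I would quote the mixed-fiber-polytope description of the eliminant in that form: the Newton polytope of the polynomial generating $\T(J\cap K[x_1,\dots,x_n])$ is affinely isomorphic to the mixed fiber polytope $\Sigma_{\rho^\circ}(\new(F_1),\dots,\new(F_{n-m}))$, where $\rho^\circ$ is the coordinate projection onto the $\lambda$-block (this is the "generic projection / coordinate elimination" case, and is exactly \cite[Thm.~4.1]{sturmfels-yu-2008} together with \cite{esterov-khovanski-2006}). The remaining work is purely combinatorial: relate $\Sigma_{\rho^\circ}(\new(F_1),\dots,\new(F_{n-m}))$ to $\Sigma_{\pi^\circ}(\new(f_1),\dots,\new(f_{n-m}))$. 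Here I would use the injective linear map
\[
  L : \R^n \to \R^{n+l}, \quad
  (\alpha_1,\dots,\alpha_n) \mapsto \Bigl(\alpha_1,\dots,\alpha_n,\textstyle\sum_i v_i^{(1)}\alpha_i,\dots,\sum_i v_i^{(l)}\alpha_i\Bigr),
\]
already appearing in the proof of Lemma~\ref{le:carryover}, which carries $\new(f_i)$ onto $\new(F_i)$. The graph of $L$ is an $n$-dimensional subspace of $\R^{n+l}$, and $\rho^\circ\circ L$ is (up to the identification of $\R^{n-m-1}$ with $\R^l$ and a rational change of basis) the complementary map $\pi^\circ$, because the $v^{(j)}$ span $\ker\pi$ and $\pi^\circ$ is defined by having the rowspace of $A$ as its kernel. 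Then I would use the fact that the mixed fiber polytope is natural with respect to affine isomorphisms of the source and target: feeding an affine isomorphism $\Phi$ into the Minkowski-integral definition gives $\Sigma_{\psi\circ\Phi^{-1}}(\Phi(P_1),\dots) = \overline{\Phi}(\Sigma_{\psi}(P_1,\dots))$ for the induced map $\overline{\Phi}$ on the fibers (this follows by expanding \eqref{eq:mixed1} and matching coefficients of $\lambda_1\cdots\lambda_r$, since each ordinary fiber polytope transforms this way by \cite{billera-sturmfels-92}). Applying this with $\Phi = L$ gives $\Sigma_{\rho^\circ}(\new(F_1),\dots,\new(F_{n-m})) \cong \Sigma_{\pi^\circ}(\new(f_1),\dots,\new(f_{n-m}))$ up to affine isomorphism, and chaining the isomorphisms yields the claim.

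The main obstacle is the last, combinatorial step: making precise that $\Sigma_{\rho^\circ}$ of the $F_i$-polytopes, which lives inside the $n$-dimensional graph of $L$ sitting in $\R^{n+l}$, is affinely isomorphic to $\Sigma_{\pi^\circ}$ of the $f_i$-polytopes in $\R^n$. One has to be careful that $\rho^\circ|_{\operatorname{graph}(L)}$ really corresponds to $\pi^\circ$ and not merely to some map with the same kernel — but since both are surjective linear maps $\R^n\to\R^{n-m-1}$ with kernel equal to $(\ker\pi)$ respectively the span of the $v^{(j)}$, they differ only by a rational linear automorphism of the target, which again only changes the mixed fiber polytope by an affine isomorphism, so this is harmless. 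A secondary technical point is checking that the Newton-nondegeneracy and completeness hypotheses needed for the elimination theorem in the coordinate-projection form are exactly those supplied by Lemma~\ref{le:carryover}; this is where Section~\ref{se:projelim} does its job, so it should go through without incident.
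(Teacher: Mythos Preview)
In the paper this proposition is not proved at all: it is quoted verbatim as a result of Esterov--Khovanski and Sturmfels--Tevelev--Yu, with references, and no argument is supplied. So there is nothing in the paper to compare your proof against.

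That said, your argument is essentially the paper's proof of the \emph{next} result, Theorem~\ref{th:mixedfiber1}, read in the opposite direction. The paper's logic is: accept Proposition~\ref{theo:styu} for arbitrary $\pi^\circ$ as given by the literature; then apply it to the coordinate projection $\rho^\circ$ on $\R^{n+l}$ (using Lemma~\ref{le:carryover} to transfer the hypotheses to the $\tilde f_i$) to obtain the alternative description $\rho(\Sigma_{\rho^\circ}(\new(\tilde f_1),\dots))$; and finally conclude, by comparing the two descriptions of the same Newton polytope, that $\Sigma_{\pi^\circ}(\new(f_1),\dots)$ and $\rho(\Sigma_{\rho^\circ}(\new(\tilde f_1),\dots))$ are affinely isomorphic. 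You instead take the coordinate-projection case as the black box and deduce the general $\pi^\circ$ case by your combinatorial transport along the graph map~$L$. This is a legitimate route and your naturality argument for mixed fiber polytopes under affine isomorphisms is the right tool; the paper simply never needs it because it invokes the general-$\pi^\circ$ statement directly from the cited sources. One small caution: when you quote ``\cite[Thm.~4.1]{sturmfels-yu-2008}'' for the coordinate-projection case you are citing the same theorem that already covers the general case, so strictly speaking your reduction is a detour rather than a genuine strengthening.
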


Next we provide an alternative characterization of the Newton polytope
based on the elimination characterization in Proposition~\ref{th:piinvpi}.
To apply this, we consider the coordinate projection
$\rho^{\circ}:\RR^{n+l}\to\RR^n$ which forgets the last $l$ coordinates
and $\rho:\RR^{n+l}\to\RR^l$ which forgets the first $n$ coordinates. 

\begin{theo} \label{th:mixedfiber1}
Let $f_1, \ldots, f_{n-m} \in K[x_1, \ldots, x_n]$ be Newton-nondegenerate, and let
the intersection 
\[Y \ = \ \bigcap_{i=1}^{n-m} Y_i \ = \ \bigcap_{i=1}^{n-m} \T(f_i) \ =\ \T(I)\]
be complete with $I := \langle f_1, \ldots, f_{n-m} \rangle$. 
Then the Newton polytope of the 
tropical hypersurface $\pi^{-1} \pi(\mathcal{T}(I))$ is affinely
isomorphic to
\begin{equation}
  \label{eq:alternfiberpoly}
  \rho(\Sigma_{\rho^{\circ}}(\new(\tilde{f}_1), \ldots, \new(\tilde{f}_{n-m}))) \, .
\end{equation}
Hence, up to an affine isomorphism, the mixed fiber polytopes
$\Sigma_{\pi^{\circ}}(\new(f_1), \ldots, \new(f_{n-m}))$ and
$\rho(\Sigma_{\rho^{\circ}}(\new(\tilde{f}_1), \ldots, \new(\tilde{f}_{n-m})))$
coincide.
\end{theo}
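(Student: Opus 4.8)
The plan is to deduce the result from Proposition~\ref{theo:styu}, but applied to the lifted polynomials $\tilde f_1,\dots,\tilde f_{n-m}$ and the coordinate projection $\rho\colon\RR^{n+l}\to\RR^n$ (the one forgetting the $\lambda$-coordinates), and then to transport the outcome back along the elimination identity of Proposition~\ref{th:piinvpi}. A second, direct application of Proposition~\ref{theo:styu} to $f_1,\dots,f_{n-m}$ and $\pi$ then yields the concluding sentence.

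The first thing to check is that the lifted system satisfies the hypotheses of Proposition~\ref{theo:styu}. By Lemma~\ref{le:carryover}(c) the $\tilde f_i$ are Newton-nondegenerate, and by Lemma~\ref{le:carryover}(d) the intersection $\bigcap_i\T(\tilde f_i)$ is complete, i.e.\ equals $\T(J)$; after multiplying each $\tilde f_i$ by a suitable monomial in the $\lambda_j$ I may take it to be a genuine polynomial, which alters neither $\T(\tilde f_i)$ nor, up to translation, $\new(\tilde f_i)$ and the mixed fiber polytopes. I then have to line up the combinatorial bookkeeping: $\T(J)$ is proper, hence of dimension $(n+l)-(n-m)=l+m$, and $n=(l+m)+1$, so $\rho$ projects onto a space of dimension one more than $\dim\T(J)$; the standing assumption that $\pi^{-1}\pi(\T(I))$ is a hypersurface forces $\rho(\T(J))$ to be $(l+m)$-dimensional; the complementary map of $\rho$ in the sense of Proposition~\ref{theo:styu} is exactly $\rho^\circ$, because $\ker\rho^\circ$ coincides with the row space of the matrix representing $\rho$; and the number $n-m$ of polytopes equals $l+1$, so $\Sigma_{\rho^\circ}(\new(\tilde f_1),\dots,\new(\tilde f_{n-m}))$ is a bona fide mixed fiber polytope. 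Proposition~\ref{theo:styu} then gives that the Newton polytope of the hypersurface $\rho^{-1}\rho(\T(J))$ is affinely isomorphic to $\Sigma_{\rho^\circ}(\new(\tilde f_1),\dots,\new(\tilde f_{n-m}))$.

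Next I would identify $\rho^{-1}\rho(\T(J))$. By Proposition~\ref{th:piinvpi}(a) together with the standard correspondence between eliminating $\lambda_1,\dots,\lambda_l$ and the coordinate projection $\rho$, one has $\rho(\T(J))=\T(J\cap K[x_1,\dots,x_n])=\pi^{-1}\pi(\T(I))$ (no closure is needed, a linear image of a polyhedral complex being closed). Let $g$ generate the principal elimination ideal $J\cap K[x_1,\dots,x_n]$, so that $\pi^{-1}\pi(\T(I))=\T(g)$ and $\new(g)$ is the Newton polytope of this hypersurface. Viewing $g$ as a polynomial $\hat g\in K[x_1,\dots,x_n,\lambda_1,\dots,\lambda_l]$ not involving the $\lambda_j$, I obtain $\rho^{-1}\rho(\T(J))=\T(g)\times\RR^l=\T(\hat g)$, hence $\new(\hat g)=\new(g)\times\{0\}$. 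Together with the previous paragraph, $\new(g)$ is affinely isomorphic to $\Sigma_{\rho^\circ}(\new(\tilde f_1),\dots,\new(\tilde f_{n-m}))$.

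It remains to insert the outer $\rho$ of \eqref{eq:alternfiberpoly} and to read off the final claim. By the Minkowski-integral description of fiber polytopes, $\rho^\circ$ collapses each $\Sigma_{\rho^\circ}(\lambda_1\new(\tilde f_1)+\dots+\lambda_{n-m}\new(\tilde f_{n-m}))$ to a point, and comparing Minkowski coefficients shows that $\rho^\circ$ also collapses $\Sigma_{\rho^\circ}(\new(\tilde f_1),\dots,\new(\tilde f_{n-m}))$ to a point; thus this mixed fiber polytope lies in one fiber $\RR^n\times\{c\}$ of $\rho^\circ$, on which $\rho$ is a linear isomorphism onto $\RR^n$, so applying $\rho$ does not change the affine-isomorphism class. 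This proves the first assertion. For the second, I apply Proposition~\ref{theo:styu} once more, now to $f_1,\dots,f_{n-m}$ and $\pi$, which identifies $\new(g)$ with $\Sigma_{\pi^\circ}(\new(f_1),\dots,\new(f_{n-m}))$ up to affine isomorphism, and transitivity finishes the proof. The step I expect to be most delicate is precisely this bookkeeping — certifying that Proposition~\ref{theo:styu} applies to the lifted system with ambient dimension $n+l$, ideal dimension $l+m$, and complementary map $\rho^\circ$ — along with the small observation that $\Sigma_{\rho^\circ}(\dots)$ is contained in a single fiber of $\rho^\circ$, which is what makes the extra projection in \eqref{eq:alternfiberpoly} harmless.
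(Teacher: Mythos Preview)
Your proof is correct and follows essentially the same route as the paper's: verify via Lemma~\ref{le:carryover} that the lifted system $\tilde f_1,\dots,\tilde f_{n-m}$ satisfies the hypotheses of Proposition~\ref{theo:styu}, apply that proposition with the coordinate projection forgetting the $\lambda$-variables, invoke Proposition~\ref{th:piinvpi} to identify the resulting hypersurface with $\pi^{-1}\pi(\T(I))$, observe that the outer projection $\rho$ is an affine isomorphism on the mixed fiber polytope, and finish by a second application of Proposition~\ref{theo:styu}. You supply more detail than the paper does---in particular the dimensional bookkeeping and the argument that $\Sigma_{\rho^\circ}(\dots)$ lies in a single fiber of $\rho^\circ$---but the architecture is the same.
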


\begin{proof} Starting from 
the elimination characterization in Proposition~\ref{th:piinvpi}, we
know that $\pi^{-1} \pi(\mathcal{T}(I)) \ = \ \mathcal{T}(J \cap K[x_1, \ldots, x_n])$,
where $J=\langle\tilde{f}_1,\ldots,\tilde{f}_{n-m}\rangle$.
By Lemma~\ref{le:carryover} the polynomials 
$\tilde{f}_1,\ldots,\tilde{f}_{n-m}$ are Newton-nondegenerate
and the intersection $\bigcap_{i=1}^{n-m}\T(\tilde{f_i})$ is complete.
Then Theorem \ref{theo:styu} implies that
the Newton polytope of the defining polynomial of the right
hand side is (up to an affine isomorphism) given by the mixed 
fiber polytope
\[
  \Sigma_{\rho^{\circ}}(\new(\tilde{f}_1), \ldots, \new(\tilde{f}_{n-m})) \ 
  \subseteq \ \R^{n+l} \, .
\]
Applying the canonical projection $\rho$ which maps the mixed fiber polytope 
isomorphic onto its image proves the representation~\eqref{eq:alternfiberpoly}.
The coincidence with
$\Sigma_{\pi^{\circ}}(\new(f_1), \ldots, \new(f_{n-m}))$
then follows from  Theorem~\ref{theo:styu}.
\end{proof}

In the following, 
we study the subdivision of the Newton polytope of the defining polynomial of $\pi^{-1}\pi(\T(I))$. Each cell of that subdivision provides a local description of $\pi^{-1}\pi(\T(I))$.
These cells are described by mixed fiber polytopes. We will also show how to 
patchwork these local fiber polytopes.

\begin{remark} 
\label{re:relative}
We remark that rather considering the dual subdivision of
$\pi^{-1} \pi(\T(I))$ we could equivalently consider a dual subdivision
of $\pi(\T(I))$ (which can be considered as a tropical hypersurface relative 
to an $(m+1)$-dimensional hyperplane).
\end{remark}

We concentrate on the case of a transversal intersection, and we 
will always assume that the Newton nondegeneracy condition also holds for the 
local cells. 
In the local version of Corollary~\ref{th:mixedfiber1} we have to assume
that the preimage of a cell $\pi(C)$ is unique in $Y$.
Set $k:=n-m$.

\begin{lemma}\label{le:dualcell}
In the setup of Theorem~\ref{th:mixedfiber1},
let $C$ be a cell of $Y$ and $\pi^{-1}\pi(C)\cap Y=\{C\}$.
If $C^{\vee}$ denotes the dual cell of $C$ in 
the dual subdivision of $U = \bigcup_{i=1}^k \T(f_i)$
then $C^{\vee} \ = \ C_1^{\vee} + \cdots + C_k^{\vee}$
and the corresponding dual cell of $\pi^{-1}\pi(C)\subseteq\pi^{-1}\pi(\mathcal{T}(I))$ in the subdivision of the Newton polytope of the defining polynomial of $\pi(\T(I))$ is affinely isomorphic to
\[
\Sigma_{\pi^\circ} (C_1^{\vee}, \ldots , C_k^{\vee}) \, .
\] 
\end{lemma}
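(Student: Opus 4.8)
The plan is to mimic the proof of Theorem~\ref{th:mixedfiber1}, but to carry out the passage from $f_i$ to $\tilde f_i$ and the application of Theorem~\ref{theo:styu} \emph{locally}, at the single cell $C$. First I would recall that by hypothesis $C = \bigcap_{i=1}^k C_i$ with $C_i$ a cell of $Y_i$ minimal containing $C$, so that $C^\vee = C_1^\vee + \cdots + C_k^\vee$ by the transversality/duality setup reviewed in Section~\ref{se:prelim}. The key point is that the condition $\pi^{-1}\pi(C)\cap Y = \{C\}$ means that, locally around $\pi(C)$, the projection $\pi(\T(I))$ is governed entirely by the local pieces of the $f_i$ supported on $C_i^\vee$; equivalently, if we set $g_i := f_i|_{C_i^\vee}$ (the sum of terms of $f_i$ with support in the dual cell $C_i^\vee$), then near $\pi(C)$ the variety $\pi^{-1}\pi(\T(I))$ agrees with $\pi^{-1}\pi(\T(\langle g_1,\dots,g_k\rangle))$. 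This localization is precisely the mechanism by which a cell of a dual subdivision of a projection is read off from the corresponding local tropical intersection problem.

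Second, I would apply the global Theorem~\ref{th:mixedfiber1} (equivalently Proposition~\ref{theo:styu}) to the polynomials $g_1,\dots,g_k$ in place of $f_1,\dots,f_k$. For this I must check that $g_1,\dots,g_k$ are Newton-nondegenerate and that $\bigcap_i \T(g_i)$ is a complete intersection: nondegeneracy of the local cells is exactly the standing assumption made just before the statement, and completeness/transversality along $C$ of the original intersection transfers to the local model because the local cones $LC_x(Y_i)$ at a point $x\in\relint C$ are the tropical varieties of the $g_i$ (this is the standard fan-displacement/local picture for tropical stable intersection). Then Proposition~\ref{theo:styu} gives that the Newton polytope of $\pi^{-1}\pi(\T(\langle g_1,\dots,g_k\rangle))$ is affinely isomorphic to the mixed fiber polytope $\Sigma_{\pi^\circ}(\new(g_1),\dots,\new(g_k))$. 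Since $\new(g_i)=C_i^\vee$ by construction, this reads $\Sigma_{\pi^\circ}(C_1^\vee,\dots,C_k^\vee)$, which is the claimed polytope.

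Third, I would match this up with the dual cell of $\pi(C)$ in the global subdivision. The dual cell of $\pi^{-1}\pi(C)$ in the subdivision of $\new$ of the defining polynomial of $\pi^{-1}\pi(\T(I))$ (equivalently, after Remark~\ref{re:relative}, the dual cell of $\pi(C)$ in the subdivision attached to the relative hypersurface $\pi(\T(I))$) is, by the general duality for tropical hypersurfaces recalled in Section~\ref{se:prelim}, exactly the Newton polytope of the local defining polynomial, i.e.\ of the defining polynomial of $\pi^{-1}\pi(\T(\langle g_1,\dots,g_k\rangle))$. Combining with the previous paragraph yields the affine isomorphism to $\Sigma_{\pi^\circ}(C_1^\vee,\dots,C_k^\vee)$.

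\textbf{Main obstacle.} I expect the delicate step to be the localization in the first paragraph: making precise that the hypothesis $\pi^{-1}\pi(C)\cap Y=\{C\}$ forces the germ of $\pi^{-1}\pi(\T(I))$ along $\pi^{-1}\pi(C)$ to coincide with the germ of $\pi^{-1}\pi(\T(\langle g_1,\dots,g_k\rangle))$, and that the dual cell in question is genuinely the full Newton polytope of the local defining polynomial rather than merely a face of it. The uniqueness condition $\pi^{-1}\pi(C)\cap Y=\{C\}$ is exactly what prevents contributions from other cells of $Y$ mapping into the same fiber, so the argument hinges on combining this with the elimination description of Proposition~\ref{th:piinvpi} applied to the local ideal, together with Lemma~\ref{le:carryover} to pass to the $\tilde g_i$ and $\rho(\Sigma_{\rho^\circ}(\cdots))$ as in Theorem~\ref{th:mixedfiber1}. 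Once the local model is correctly identified, the rest is a direct citation of Theorem~\ref{th:mixedfiber1}/Proposition~\ref{theo:styu} and the general duality dictionary.
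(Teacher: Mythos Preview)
Your proposal is correct and follows essentially the same route as the paper. The paper's proof is terser: it introduces the truncated polynomials $g_i$ with support $C_i^\vee$, writes the localization as the single identity $\LC_{\pi(p)}\pi(\T(I)) = \pi(\LC_p\T(I))$ (valid precisely because $C$ is the unique preimage), and then cites Proposition~\ref{theo:styu} directly rather than detouring through the $\tilde g_i$ and Theorem~\ref{th:mixedfiber1}; your more careful discussion of the nondegeneracy and completeness hypotheses for the $g_i$ is implicit in the paper's standing assumptions but not spelled out there.
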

\begin{proof}
For $1 \le i \le k$ let $g_i$ be the polynomial with support given by $C_i$
whose coefficients are induced by $f_i$. 
Then the local cone of $\T(I)$ at $p \in C$ is given by $\LC_p(\T(I))=\T(\langle g_1, \ldots, g_k\rangle)$. If $C$ is the only preimage of $\pi(C)$, then 
\[\LC_{\pi(p)}\pi(\T(I)) \ = \ \pi(\LC_p\T(I)) \, .\]
By Proposition~\ref{theo:styu} the image $\pi(\T(\langle g_1, \ldots, g_k \rangle))$ is dual to 
$\Sigma_{\pi^\circ} (C_1^{\vee}, \ldots , C_k^{\vee})$.
\end{proof}

Note that each $C_i^\vee$ is the dual of a cell of dimension 
at most $n-1$, so it has dimension at least $1$. Thus 
the sum $C_1^\vee+\cdots+C_{n-m}^\vee$ is a 
mixed cell (cf.\ \cite{vigeland-2007}).

By Remark~\ref{re:relative}
we can consider $\pi(\T(I))$ as an $m$-dimensional complex in an $(m+1)$-dimensional space.
Every $j$-dimensional face $F$ of $\pi(\T(I))$ is either the projection of a unique
$j$-dimensional face of $\T(I)$ (see Lemma~\ref{le:dualcell}), or the intersection of the images of faces of $\T(I)$. Since every cell in the tropical hypersurface 
$\pi(\mathcal{T}(I))$ respectively $\pi^{-1} \pi(\mathcal{T}(I))$ 
arises in this way, we obtain:

\begin{theo}\label{ko:subdivision}
Let $k = n-m$, $I = \langle f_1, \ldots, f_k \rangle
\ \lhd \ K[x_1,\ldots,x_n]$ be an $m$-dimensional ideal, 
$\bigcap_{i=1}^{k} \mathcal{T}(f_i)$ be a complete intersection and
$\pi:\RR^n\to\RR^{m+1}$ be a rational projection.
If the condition of Newton-nondegeneracy is satisfied with respect to all
local cells then up to affine isomorphisms, the cells of the dual
subdivision of $\pi^{-1} \pi \T(I)$ are of the form
\begin{equation}
  \label{eq:p}
  \sum_{i=1}^p \Sigma_{\pi^\circ} (C_{i1}^{\vee}, \ldots, C_{i{k}}^{\vee})\mbox{ for some }p\in\NN.
\end{equation}
Here, $F_1, \ldots, F_p$ are faces of $\bigcap_{i=1}^k \T(f_i)$ 
and the dual cell of $F_i\subseteq U = \bigcup_{i=1}^k \T(f_i)$
is given by $F_i^\vee=C_{i1}^{\vee}+ \cdots+ C_{ik}^{\vee}$ with 
faces $C_{i1}, \ldots, C_{i k}$ of 
$\T(f_1), \ldots, \T(f_{k})$.
\end{theo}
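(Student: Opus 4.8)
The plan is to combine the global description of the Newton polytope from Theorem~\ref{th:mixedfiber1} with the local analysis already carried out in Lemma~\ref{le:dualcell}, and then to ``glue'' the local pieces along the face lattice of the union $U = \bigcup_{i=1}^k \T(f_i)$. First I would fix a cell $D$ of the dual subdivision of $\pi^{-1}\pi\T(I)$; geometrically $D^\vee$ (the dual face in $\pi^{-1}\pi\T(I)$) is a face $E$ of that tropical hypersurface, and by Remark~\ref{re:relative} we may equivalently work with the face lattice of $\pi(\T(I))$ viewed as a codimension-one complex in $\R^{m+1}$. The key structural observation is that every face $E$ of $\pi(\T(I))$ is obtained from faces of $\T(I)$ in exactly one of two ways: either $E = \pi(F)$ for a \emph{unique} face $F$ of $\T(I)$ (the generic situation, in which no self-overlap occurs), or $E$ lies in the image of several faces $F_1,\dots,F_p$ of $\T(I)$ that the projection $\pi$ superimposes, so that $E = \bigcap_{j=1}^p \pi(F_j)$ with $\pi^{-1}(\relint E)\cap\T(I)$ meeting each $F_j$. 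This dichotomy is what makes the parameter $p$ in~\eqref{eq:p} appear.

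Next I would treat the two cases. In the first case, Lemma~\ref{le:dualcell} applies verbatim: with $F = F_1$, writing $F^\vee = C_{11}^\vee + \cdots + C_{1k}^\vee$ for the dual cell in $U$, the dual cell of $E$ in the Newton polytope of the defining polynomial of $\pi(\T(I))$ is affinely isomorphic to $\Sigma_{\pi^\circ}(C_{11}^\vee,\dots,C_{1k}^\vee)$, i.e.\ the sum~\eqref{eq:p} with $p=1$. In the second case the local picture is that of a transversal superposition: near a generic point $q\in\relint E$ the germ $\LC_q\,\pi(\T(I))$ is the set-theoretic union $\bigcup_{j=1}^p \pi(\LC_{p_j}\T(I))$ of the $p$ local pieces coming from the points $p_j\in F_j$ above $q$, and each $\LC_{p_j}\T(I)$ is itself $\T(\langle g_{j1},\dots,g_{jk}\rangle)$ for the truncated polynomials $g_{ji}$ supported on the faces $C_{ji}$ of $\new(f_i)$. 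Tropical duality turns the set-theoretic \emph{union} of hypersurface germs into the \emph{Minkowski sum} of the corresponding dual cells; so the dual cell of $E$ is the Minkowski sum over $j$ of the dual cells of $\pi(\LC_{p_j}\T(I))$, and by Lemma~\ref{le:dualcell} the $j$-th summand is affinely isomorphic to $\Sigma_{\pi^\circ}(C_{j1}^\vee,\dots,C_{jk}^\vee)$. This yields precisely~\eqref{eq:p}. The Newton-nondegeneracy hypothesis applied to all local cells is what guarantees that each local piece $\pi(\LC_{p_j}\T(I))$ is again dual to the asserted mixed fiber polytope, and transversality guarantees that the $F_j^\vee=C_{j1}^\vee+\cdots+C_{jk}^\vee$ have the expected dimensions so that the sum is genuinely a mixed cell (as noted after Lemma~\ref{le:dualcell}).

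The two remaining points I would nail down are: (i) consistency of the affine isomorphisms. Each local statement is only ``up to affine isomorphism,'' and a priori the isomorphisms used for different faces $E$ need not agree; but since they all arise by restriction from the single global affine isomorphism of Theorem~\ref{th:mixedfiber1} (equivalently, from the common complementary map $\pi^\circ$ and the lifting data of the $f_i$), the local identifications are restrictions of one fixed global identification, and the claimed description is compatible across the whole subdivision. (This is exactly the patchworking issue flagged in the paragraph after Lemma~\ref{le:dualcell} and addressed in Statements~\ref{theo:conv}--\ref{cor:vector}.) (ii) Exhaustiveness: I must check that \emph{every} cell of the dual subdivision of $\pi^{-1}\pi\T(I)$ arises as the dual of such a face $E$ of $\pi(\T(I))$ --- this is immediate because dual cells are in inclusion-reversing bijection with faces of the hypersurface, and by the dichotomy above every such face is of one of the two types.

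The main obstacle I expect is case~2, the transversal superposition: one has to argue carefully that, locally around a generic point of $E$, the image $\pi(\T(I))$ really is the \emph{set-theoretic union} $\bigcup_j \pi(\LC_{p_j}\T(I))$ with no extra incidences, and that passing to dual cells converts this union into the Minkowski sum with the right coefficients (no cancellation, correct multiplicities). This uses transversality of the intersection $\bigcap_i \T(f_i)$ together with Newton-nondegeneracy of the local cells in an essential way, and it is the step where the bookkeeping of which faces $C_{ji}$ contribute --- and hence the precise form of~\eqref{eq:p} --- has to be done with care; the reduction to Lemma~\ref{le:dualcell} then makes the rest routine.
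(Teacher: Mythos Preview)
Your proposal is correct and follows essentially the same route as the paper: identify the preimage cells $F_1,\dots,F_p$ over a face of $\pi(\T(I))$, write the local cone of the projection as the union $\bigcup_i \pi(\LC_{x^{(i)}}\T(I))$, apply Lemma~\ref{le:dualcell} to each summand, and then dualize the union to a Minkowski sum. Two small remarks: the paper handles all $p$ uniformly rather than splitting off $p=1$, and it makes the union-to-sum step explicit via ``the normal fan of a Minkowski sum is the common refinement of the normal fans''; your point~(i) about global consistency of the affine isomorphisms is \emph{not} part of this proof in the paper---it is precisely what is deferred to Statements~\ref{theo:conv}--\ref{cor:vector}, so you can drop it here.
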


Specifically, for $p=1$ the full-dimensional cells are of the form
$\Sigma_{\pi^\circ} (C_1^\vee, \ldots, C_{k}^\vee)$,
where $C_1^\vee+\dots + C_{k}^{\vee}$ is 
a mixed cell in $\new(f_1)+ \cdots + \new(f_{k})$.

\begin{proof}
For any cell $D\in\pi(\T(I))$ let $F_1,\ldots,F_p$ be the cells in $\T(I)$ minimal with $D\subseteq\pi(F_i)$ and $C_{i1}^{\vee}+ \ldots+ C_{ik}^{\vee}$ be the dual cells in the subdivision of $\new(f_1\cdots f_k)$. Then
\[ \LC_d \pi(\T(I)) \ = \ \bigcup_{i=1}^p\pi(\LC_{x^{(i)}}\T(I)) \, , \]
where $d$ is a point in $D$ and $x^{(i)}$ is the preimage of $d$ in $F_i$. As above the image $\pi(\LC_{x^{(i)}}\T(I))$ is dual to $\Sigma_{\pi^\circ} (C_{i1}^{\vee}, \ldots, C_{i{k}}^{\vee})$. Since the normal fan of a sum of two polytopes is the common refinement of the normal fans of the two polytopes, 
\[
  \bigcup_{i=1}^p\pi(\LC_{x^{(i)}}\T(I)) \ \mbox{is dual to}\ \sum_{i=1}^p \Sigma_{\pi^\circ} (C_{i1}^{\vee}, \ldots, C_{i{k}}^{\vee}) \, .
\]
This proves the claim.  
\end{proof}

Hence, every dual cell of the tropical hypersurface $\pi^{-1} \pi(\T(I))$
is indexed by some $p$-tuple of ``formal'' 
mixed fiber polytopes $\Sigma_{\pi^\circ} (C_{i1}^{\vee}, \ldots, C_{i{k}}^{\vee})$.

\subsection{Self-intersections\label{se:selfinter}}
If the projections $\pi(F_1),\pi(F_2)$ of two non-adjacent 
faces $F_1$ and $F_2$ of $\mathcal{T}(I)$ intersect in a point $a$, 
then $a$ is called a \emph{self-intersection point} of 
$\mathcal{T}(I)$ under~$\pi$. See Figure \ref{fig:sip} for an example.
From the dual viewpoint, a face of $\pi(\mathcal{T}(I))$ contains a
self-intersection point if the number $p$ of terms in the dual
characterization~\eqref{eq:p} is at least~2.

\begin{bsp1}
Figure~\ref{fig:sip} shows an example of a tropical line in $\R^3$
and a projection $\pi$ such that two non-adjacent half-rays of the
line intersect in the projection.
\end{bsp1}

\ifpictures
\begin{figure}[!ht]
\begin{center}
\resizebox{11cm}{!}{
 \input{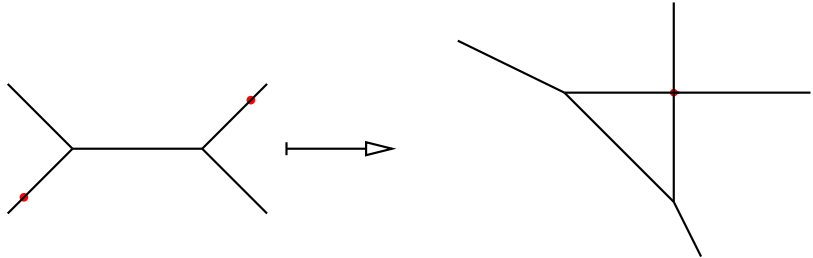}}
\end{center}
\vspace*{-1cm}

\caption{\label{fig:sip}A self-intersection point of 
 a line in $\R^3$ under a projection to $\R^2$}
\end{figure}
\fi

The number of self-intersections is not an invariant of the
tropical variety $\T(I)$, but depends on the choice of the
projection $\pi$.

Concerning upper bounds on the number of self-intersection points
for the case of curves, observe that every
self-intersection point is a singular point of the projection 
(which, as mentioned earlier, can be regarded as a tropical curve $\T(f)$ 
in the plane). Thus, the number of self-intersection
points is bounded by the number of singular points of $\T(f)$.
In Section~\ref{se:bounds}, we deal with more explicit bounds and
constructions for various special classes of tropical varieties

\subsection{Constructing the dual subdivision}

In all the preceding statements on the Newton polytopes and mixed fiber
polytopes, the Newton polytopes are only determined up to an affine isomorphism.
Since by Theorem~\ref{ko:subdivision}, the local cells in the subdivision
are fiber polytopes as well, the question arises how neighboring
cells fit together. In the following, we explain this patchworking of
the local mixed fiber polytopes. Theorem \ref{theo:conv} characterizes 
the offset of the fiber polytope of a facet of a simplex to the face of the fiber polytope of that simplex. Theorem \ref{theo:facemixed} is the generalization to faces of mixed fiber polytopes of mixed cells and Corollary \ref{cor:vector} gives us the desired patchworking of the cells in the dual subdivision of $\pi(\T(I))$.

For simplicity, we assume that we know a vertex $v$ of $\pi(\T(I))$
and the corresponding $m$-dimensional cell $C$
of the dual subdivision of $S := \pi(\T(I))$.
We explain how to pass over to a neighboring cell.
Locally around $v$, the tropical variety
$\T(I)$ defines the $m$-dimensional fan $\Gamma:=\LC_v(\T(I))$.
In order to determine the neighboring cell of $C$, 
we consider the 1-skeleton of $\Gamma$.
$\Gamma$ is geometrically dual to $\pi(C)$.

Consider a fixed direction vector $w$ of one of the rays 
of $\Gamma$. Let $v'$ be the neighboring vertex of $v$
on $S$ with regard to this ray, and let $\Gamma'$ be the
corresponding local fan. Further let $D$ be the dual
cell corresponding to $v'$.
Recall that $\pi^{\circ}$ has been fixed, and set $k:=n-m$.
Up to affine isomorphisms, Theorem~\ref{ko:subdivision} expresses 
$C$ and $D$ as
\begin{eqnarray*}
  C & = & \sum_{i=1}^{p_1} \Sigma_{\pi^\circ}(C_{i1}^{\vee}, \ldots, C_{ik}^{\vee}) \, , \\
  D & = & \sum_{i=1}^{p_2} \Sigma_{\pi^\circ}(D_{i1}^{\vee}, \ldots, D_{ik}^{\vee}) \, .
\end{eqnarray*}
Due to the affine isomorphisms,
these polytopes $C$ and $D$ do not necessarily share a common facet.
In order to characterize the translation involved it
suffices to characterize the offset from $C$ to the
``common face'' (up to a translation) of $C$ and $D$.
Denoting by $\arg \max$ the set of arguments at which a maximum is
attained, 
this common face is given by $\face_{w}(C) := \arg \max_{x \in C} w^T x$
and by $\face_{-w} (D)$ (up to translation) for some $w \in \R^n$.
We denote by $F$ this
face and use the notation
\[
  F \ = \ \sum_{i=1}^{q} \Sigma_{\pi^\circ}(F_{i1}^{\vee}, \ldots, F_{ik}^{\vee})
\]
with $q \le \min\{p_1,p_2\}$.
In the simplest case, $C$ consists of only one summand. Then in
the representation of $F$ one of the terms
$F_{ij}$ is a face of the corresponding $C_{ij}$ and the other $F_{ij}$
coincide with the corresponding $C_{ij}$.

\subsection*{Mappings $\pi^\circ$ to $\R^1$}
For the case that $\pi^\circ$ maps to $\R^1$, we will give explicit descriptions of the offsets of the mixed fiber polytopes of the mixed cells.
For a lattice polytope $P$,
let
\begin{equation}
\label{eq:pip}
{\pi^\circ}_{P} \ = \ \min_{x \in P} {\pi^\circ}(x) \; \text{ and } \;
{\pi^\circ}^{P} \ = \ \max_{x \in P} {\pi^\circ}(x) \, .
\end{equation}
Then
\[
  \Sigma_{\pi^\circ}(P) \ = \ 
  \sum_{i= {\pi^\circ}_P}^{{\pi^\circ}^P-1} \int_i^{i+1} ({\pi^\circ}^{-1}(x) \cap P) dx
  \ = \ \sum_{i={\pi^\circ}_P}^{{\pi^\circ}^P-1} ({\pi^\circ}^{-1}(i+\frac{1}{2}) \cap P) \, .
\]
Note that in general for a face $F$ of $P$ we do not have
that set-theoretically $\Sigma_{\pi^\circ}(F)$ is a face of $\Sigma_{\pi^\circ}(P)$.
As a consequence, in general for two polytopes $P_1$ and $P_2$ with
a common face the polytopes $\Sigma_{\pi^\circ}(P_1)$ and $\Sigma_{\pi^\circ}(P_2)$ 
do not have a common face. 

\begin{bsp} \label{ex:patchworkingex}
Let $\pi^\circ:\R^3\to \R, x\mapsto (1,1,1)\cdot x$, $P$ be the standard cube and $F$ the face 
\[\face_{(0,-1,0)}P \ = \ \conv\{(0,0,0),(1,0,1),(1,0,0),(0,0,1)\}.
\]
So $\pi^\circ(P)=[0,3]$, $\pi^\circ(F)=[0,2]$.
Then $\Sigma_\pi^\circ(P) = \sum_{i=0}^2 ({\pi^\circ}^{-1}(i + \frac{1}{2})\cap P)$
is a hexagon, and 
$\Sigma_{\pi^\circ}(F) = \sum_{i=0}^1 ({\pi^\circ}^{-1}(i + \frac{1}{2})\cap F)$
is a segment; see Figure \ref{fig5}. In particular, 
$\Sigma_{\pi^\circ}(F)+(1,\frac{1}{2},1)$ is a face of $\Sigma_{\pi^\circ}(P)$.
\end{bsp}

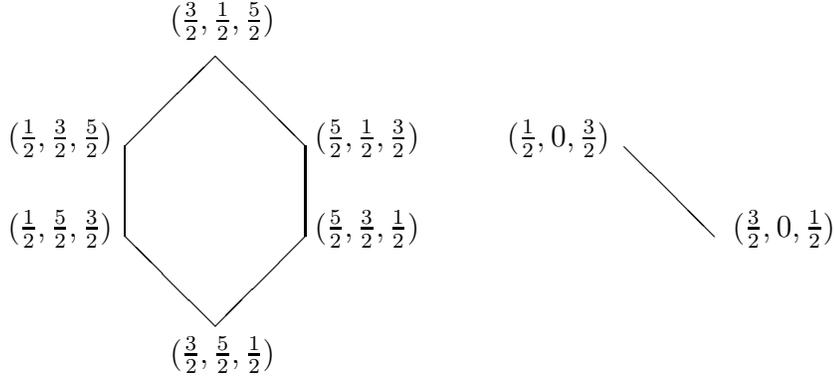
\begin{figure}[ht!]
\begin{center}
\begin{minipage}{5cm}
\setlength{\unitlength}{1.2cm}
\begin{picture}(3,4.5)
\put(2,1){\color{black}\line(-1,1){1}}
\put(1,2){\color{black}\line(0,1){1}}
\put(1,3){\color{black}\line(1,1){1}}
\put(2,4){\color{black}\line(1,-1){1}}
\put(3,3){\color{black}\line(0,-1){1}}
\put(3,2){\color{black}\line(-1,-1){1}}
\put(-0.3,2){\color{black}$(\frac{1}{2},\frac{5}{2},\frac{3}{2})$}
\put(3.1,2){\color{black}$(\frac{5}{2},\frac{3}{2},\frac{1}{2})$}
\put(3.1,3){\color{black}$(\frac{5}{2},\frac{1}{2},\frac{3}{2})$}
\put(-0.3,3){\color{black}$(\frac{1}{2},\frac{3}{2},\frac{5}{2})$}
\put(1.5,0.6){\color{black}$(\frac{3}{2},\frac{5}{2},\frac{1}{2})$}
\put(1.5,4.3){\color{black}$(\frac{3}{2},\frac{1}{2},\frac{5}{2})$}
\end{picture}
\end{minipage}\hspace{1.5cm}
\begin{minipage}{5cm}
\setlength{\unitlength}{1.2cm}
\begin{picture}(2,4.5)
\put(1,3){\color{black}\line(1,-1){1}}
\put(-0.3,3){\color{black}$(\frac{1}{2},0,\frac{3}{2})$}
\put(2.2,2){\color{black}$(\frac{3}{2},0,\frac{1}{2})$}
\end{picture}
\end{minipage}
\vspace*{-0.6cm}

\end{center}
\caption{\label{fig5}The fiber polytopes of $P$ and $F$} 
\end{figure}

First we characterize the affine isomorphism between the fiber polytope of a face of a polytope and the face of a fiber polytope in a simple case. For a 
lattice polytope $P$ and $i\in\NN$ define 
\[ [P]_i \ :=\ \arg \max_{x \in P \cap {\pi^\circ}^{-1}(i+\frac{1}{2})} w^Tx \, . 
\]

\begin{theo}\label{theo:conv}
Let $F$ be an $(n-1)$-dimensional lattice 
polytope in $\R^n$, ${\pi^\circ}:\R^n\to\R$, $v \in \Z^n \setminus \aff F$
and $P = \conv (F \cup \{v\})$. Let $w$ be an outer normal vector of the 
face $F$ of~$P$.
Then the face $\face_w(\Sigma_{\pi^\circ}(P))$ coincides with
\begin{equation}
  \label{eq:face1}
  \begin{cases}
    \Sigma_{\pi^\circ}(F) & \text{ if } {\pi^\circ}(v) \in {\pi^\circ}(F) \, , \\
  \Sigma_{\pi^\circ}(F) + \sum\limits_{i=\max_{x \in F} {\pi^\circ}(x)}^{{\pi^\circ}(v)-1} [P]_i & \text{ if } {\pi^\circ}(v) > \max\limits_{x \in F} {\pi^\circ}(x) \, , \\
  \Sigma_{\pi^\circ}(F) + \sum\limits^{\min_{x \in F} {\pi^\circ}(x)-1}_{{i=\pi^\circ}(v)} [P]_i & \text{ if } {\pi^\circ}(v) < \min\limits_{x \in F} {\pi^\circ}(x) \, ,
  \end{cases}
\end{equation}
where we assumed that all $\arg \max_{x \in P \cap {\pi^\circ}^{-1}(i+\frac{1}{2})} w^T x$ are unique.
\end{theo}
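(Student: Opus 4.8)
The plan is to compute both sides of the claimed equality slice-by-slice with respect to $\pi^\circ$ and compare. Recall from the discussion preceding the theorem that for any lattice polytope $Q\subseteq\R^n$,
\[
  \Sigma_{\pi^\circ}(Q) \ = \ \sum_{i={\pi^\circ}_Q}^{{\pi^\circ}^{Q}-1} \bigl({\pi^\circ}^{-1}(i+\tfrac12)\cap Q\bigr),
\]
a Minkowski sum of the (lattice-centered) slices of $Q$. Applying this to $Q=P=\conv(F\cup\{v\})$ and using that $P$ is a ``cone-like'' polytope over $F$ with apex $v$, each slice ${\pi^\circ}^{-1}(i+\tfrac12)\cap P$ decomposes naturally: for levels $i$ lying in ${\pi^\circ}(F)$ the slice of $P$ contains the slice of $F$ together with extra material coming from the edges joining $v$ to $F$, while for levels between ${\pi^\circ}(F)$ and ${\pi^\circ}(v)$ the slice of $P$ is a shrunken copy of a slice of $F$ obtained by the obvious central projection toward $v$.

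The main step is to show that taking $\face_w$ commutes with the Minkowski summation in a controlled way. Since $w$ is an outer normal of the facet $F$ of $P$, for each fixed level $i+\tfrac12$ the functional $w^T$ restricted to the slice ${\pi^\circ}^{-1}(i+\tfrac12)\cap P$ is maximized exactly on $[P]_i$, and $\face_w$ of a Minkowski sum is the Minkowski sum of the $\face_w$'s of the summands (here using the uniqueness hypothesis on the $\arg\max$ so that these faces are single points in the relevant range and genuine faces in general). Hence
\[
  \face_w(\Sigma_{\pi^\circ}(P)) \ = \ \sum_{i={\pi^\circ}_P}^{{\pi^\circ}^{P}-1} [P]_i \, .
\]
It remains to identify this sum with the right-hand side of~\eqref{eq:face1} in each of the three cases. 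For levels $i$ with $i+\tfrac12\in{\pi^\circ}(F)$, one checks that $[P]_i$ equals the corresponding slice ${\pi^\circ}^{-1}(i+\tfrac12)\cap F$ of $F$: the points of $P$ in that level that maximize $w^T$ are precisely those on the facet $F$, because moving off $F$ toward the apex $v$ strictly decreases $w^T x$ (as $w$ is an outer normal of $F$). Summing these contributions reproduces $\Sigma_{\pi^\circ}(F)$. The remaining levels (if any) are exactly those strictly between $\max_{x\in F}{\pi^\circ}(x)$ and ${\pi^\circ}(v)$, or strictly between ${\pi^\circ}(v)$ and $\min_{x\in F}{\pi^\circ}(x)$, and their $[P]_i$ terms are by definition the extra summands $\sum [P]_i$ appearing in the second and third cases; when ${\pi^\circ}(v)\in{\pi^\circ}(F)$ there are no such levels and we are in the first case. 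This yields the three-case formula.

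The step I expect to be the main obstacle is the careful bookkeeping of the slices near the endpoints ${\pi^\circ}(v)$ and the extreme values of ${\pi^\circ}$ on $F$: one must verify that the half-integer levels $i+\tfrac12$ are correctly matched with the integer indices $i$ in~\eqref{eq:pip} and~\eqref{eq:face1}, that the slice of $P$ at the apex level degenerates appropriately, and that no slice is double-counted between the ``$F$-part'' and the ``cone-part.'' A secondary subtlety is justifying that $\face_w$ distributes over the Minkowski integral/sum even though, as the text warns, $\Sigma_{\pi^\circ}(F)$ need not itself be set-theoretically a face of $\Sigma_{\pi^\circ}(P)$ — the point is that it becomes one only after the explicit translation by $\sum_{i}[P]_i$ over the cone levels, which is exactly what the formula records.
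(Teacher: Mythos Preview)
Your proposal is correct and is essentially an expanded version of the paper's own proof, which is the single sentence ``The points in~\eqref{eq:face1} are exactly the points in $\Sigma_{\pi^\circ}(P)$ which maximize the objective function $x \mapsto w^T x$.'' You unpack this by distributing $\face_w$ over the Minkowski sum of slices and identifying the $w$-face of each slice (the slice of $F$ for levels in $\pi^\circ(F)$, the point $[P]_i$ for the remaining levels); one small presentational slip is that your intermediate display $\face_w(\Sigma_{\pi^\circ}(P))=\sum_i [P]_i$ reads oddly against the uniqueness hypothesis, since for levels in $\pi^\circ(F)$ the set $[P]_i$ is the full slice of $F$ rather than a point, but you immediately clarify this and it does not affect the argument.
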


\begin{proof}
The points in~\eqref{eq:face1} are exactly the points in $\Sigma_{\pi^\circ}(P)$ which
maximize the objective function $x \mapsto w^T x$.
\end{proof}

Our characterization of the offset between two neighboring cells
in the dual subdivision is based upon the following theorem.
It describes the relation between the face of a 
mixed fiber polytope of $C$ and $D$ and the mixed fiber polytope of 
two faces of the two polytopes $C$ and $D$, where all faces 
maximize the same linear map. 

\begin{theo}\label{theo:facemixed}
Let ${\pi}:\R^n\to\R^{n-1}$ be a rational projection with
complementary map $\psi := \pi^{\circ}$, $C,D \subseteq \RR^n$ lattice
polytopes, and let $w\in\R^n$. Then
\begin{eqnarray*}
&& \Sigma_{\psi}(\face_w(C),\face_w(D))+ \sum_{i={\psi}_{C+D}}^{{\psi}_{\face_w(C+D)}-1} [C+D]_i
+ \sum_{i={\psi}^{\face_w(C+D)}}^{{\psi}^{C+D}-1}
[C+D]_i \\
& = & \face_w\Sigma_{\psi}(C,D) +
\sum_{i={\psi}_{C}}^{{\psi}_{\face_w(C)}-1} [C]_i +
\sum_{i={\psi}^{\face_w(C)}}^{{\psi}^C-1} [C]_i +
\sum_{i={\psi}_D}^{{\psi}_{\face_w(D)}-1}  [D]_i +
\sum_{i={\psi}^{\face_w(D)}}^{{\psi}^D-1} [D]_i \, ,
\end{eqnarray*}
where $\psi_P$ and $\psi^P$ are defined as in~\eqref{eq:pip}.
\end{theo}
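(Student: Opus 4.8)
The plan is to reduce Theorem~\ref{theo:facemixed} to Theorem~\ref{theo:conv} by using the inclusion-exclusion formula for the mixed fiber polytope (Theorem~\ref{th:alternatingsum}) in the special case $n-1 = 1$, i.e., $\psi = \pi^\circ : \R^n \to \R^1$, where it reads $\Sigma_\psi(C,D) = \Sigma_\psi(C+D) - \Sigma_\psi(C) - \Sigma_\psi(D)$ as an identity of virtual polytopes. First I would observe that $\face_w$ commutes with Minkowski sums, $\face_w(C+D) = \face_w(C) + \face_w(D)$, and that consequently, on the level of virtual polytopes, $\Sigma_\psi(\face_w(C),\face_w(D)) = \Sigma_\psi(\face_w(C)+\face_w(D)) - \Sigma_\psi(\face_w(C)) - \Sigma_\psi(\face_w(D)) = \Sigma_\psi(\face_w(C+D)) - \Sigma_\psi(\face_w(C)) - \Sigma_\psi(\face_w(D))$.

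The key step is then to apply Theorem~\ref{theo:conv} — or rather its natural extension from the case $P = \conv(F \cup \{v\})$ to an arbitrary lattice polytope $P$ with face $F = \face_w(P)$ — to each of the three polytopes $C$, $D$, and $C+D$ separately. For a lattice polytope $P$ with face $F := \face_w(P)$, slicing by $\psi^{-1}(i+\tfrac12)$ and taking $\arg\max_{w}$ on each slice gives
\[
  \face_w\Sigma_\psi(P) \ = \ \Sigma_\psi(F) + \sum_{i=\psi_P}^{\psi_F - 1} [P]_i + \sum_{i=\psi^F}^{\psi^P - 1} [P]_i \, ,
\]
since $\face_w(\Sigma_\psi(P))$ consists exactly of those Minkowski integrands that maximize $w^Tx$, and on a slice $\psi^{-1}(i+\tfrac12)\cap P$ this maximizer is $[P]_i$; when the level $i+\tfrac12$ lies over $\psi(F)$ the maximizing slice $[P]_i$ already lies inside $F$ and contributes to $\Sigma_\psi(F)$, while for the remaining levels (those below $\psi_F$ or above $\psi^F$) it contributes the correction terms $[P]_i$. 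This is precisely the content of the proof of Theorem~\ref{theo:conv}, which only uses that the extremal points over each slice are unique; I would state and prove this slightly more general slicing identity as the real lemma. Writing it down for $P \in \{C, D, C+D\}$ and rearranging — moving $\face_w\Sigma_\psi(C)$ and $\face_w\Sigma_\psi(D)$ and their correction sums to the right, and the $\Sigma_\psi(\face_w(\cdot))$ and the $C+D$ correction sums to the left — yields exactly the claimed identity, once one uses $\Sigma_\psi(\face_w(C+D)) = \Sigma_\psi(\face_w(C)) + \Sigma_\psi(\face_w(D)) + \Sigma_\psi(\face_w(C),\face_w(D))$ and $\Sigma_\psi(C+D) = \Sigma_\psi(C) + \Sigma_\psi(D) + \Sigma_\psi(C,D)$ to cancel the genuine (nonmixed) fiber-polytope terms against each other.

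The main obstacle is bookkeeping rather than conceptual: one must verify that all the terms in the three instances of the slicing identity actually cancel in pairs as intended, which requires checking that the nonmixed pieces $\Sigma_\psi(F_C)$, $\Sigma_\psi(F_D)$, $\Sigma_\psi(F_{C+D})$ appearing on the two sides combine correctly via Theorem~\ref{th:alternatingsum}, and that the ranges of the index sums are compatible — in particular that $\psi_{C+D} = \psi_C + \psi_D$, $\psi^{C+D} = \psi^C + \psi^D$, and the analogous additivity for $\face_w$, so that the correction sums line up. One should also note that every Minkowski sum and difference occurring here is a genuine polytope (not merely virtual), since each $\Sigma_\psi(C,D)$, $\Sigma_\psi(\face_w(C),\face_w(D))$ is an honest polytope by construction; thus the virtual-polytope manipulations are legitimate. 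The uniqueness hypothesis on $\arg\max$ (inherited from Theorem~\ref{theo:conv}) is what makes each $[P]_i$ a single point and hence makes the slice-by-slice comparison valid.
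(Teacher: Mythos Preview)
Your approach is correct and essentially identical to the paper's. Both proofs use Theorem~\ref{th:alternatingsum} to write $\Sigma_\psi(C+D)=\Sigma_\psi(C,D)+\Sigma_\psi(C)+\Sigma_\psi(D)$, apply $\face_w$ to obtain $\face_w\Sigma_\psi(C+D)=\face_w\Sigma_\psi(C,D)+\face_w\Sigma_\psi(C)+\face_w\Sigma_\psi(D)$, invoke the slicing identity $\face_w\Sigma_\psi(P)=\Sigma_\psi(\face_w P)+(\text{correction over }\psi(P)\setminus\psi(\face_w P))$ three times, and then cancel the unmixed $\Sigma_\psi(\face_w\cdot)$ terms via a second application of Theorem~\ref{th:alternatingsum}; the only cosmetic difference is that the paper writes the correction terms as Minkowski integrals and discretizes to the sums $\sum[P]_i$ at the very end, whereas you work with the discrete sums throughout. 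One small remark: the additivity $\psi_{C+D}=\psi_C+\psi_D$ you flag is not actually needed for the cancellation, which happens purely at the level of the $\Sigma_\psi(\face_w C)$, $\Sigma_\psi(\face_w D)$ terms.
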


\begin{proof}
Theorem \ref{th:alternatingsum} implies
$\Sigma_{\psi}(C+D) \ = \ \Sigma_{\psi}(C,D)+\Sigma_{\psi}(C)+\Sigma_{\psi}(D)$, which carries over to the faces of the polytopes,
\begin{equation}
\label{eq:sigma}
\face_w\Sigma_{\psi}(C+D) \ = \ \face_w\Sigma_{\psi}(C,D)+\face_w\Sigma_{\psi}(C)+\face_w\Sigma_{\psi}(D) \, .
\end{equation}
For a face $F=\face_w(P)$ of an $n$-polytope $P$ in $\R^n$ we have 
$\Sigma_{\psi}(F) = \face_w\int_{x\in \psi(F)} (P\cap{\psi}^{-1})dx$ and therefore
\[
  \label{eq:face3}
  \face_w(\Sigma_{\psi}(P))
  \ = \ \Sigma_{\psi}(F) + \face_w\int_{{\psi}(P)\setminus{\psi}(F)}\hspace*{-1cm}{\psi}^{-1}(x)\cap P\ dx \, .
\]
Applying this three times in~\eqref{eq:sigma} yields
\begin{eqnarray*}
& & \Sigma_{\psi}(\face_w(C+D)) + \face_w\int_{{\psi}(C+D)\setminus{\psi}(\face_w(C+D))}\hspace*{-3cm}{\psi}^{-1}(x)\cap (C+D)\  dx \\
& = & \face_w\Sigma_{\psi}(C,D)+\Sigma_{{\psi}}(\face_w(C)) + \face_w\int_{{\psi}(C)\setminus{\psi}(\face_w(C))}\hspace*{-2cm}{\psi}^{-1}(x)\cap C\  dx \\
& & + \ \Sigma_{{\psi}}(\face_w(D)) + \face_w\int_{{\psi}(D)\setminus{\psi}(\face_w(D))}\hspace*{-2cm}{\psi}^{-1}(x)\cap D\  dx \, .
\end{eqnarray*}
By linearity of the $\face_w$-operator and Theorem \ref{th:alternatingsum} we obtain 
\begin{eqnarray*}
& & \Sigma_{{\psi}}(\face_w(C),\face_w(D)) + \face_w\int_{{\psi}(C+D)\setminus{\psi}(\face_w(C+D))}\hspace*{-3cm}{\psi}^{-1}(x)\cap (C+D)\  dx \\
& = & \face_w\Sigma_{\psi}(C,D) + \face_w\int_{{\psi}(C)\setminus{\psi}(\face_w(C))}\hspace*{-2cm}{\psi}^{-1}(x)\cap C\  dx +\face_w\int_{{\psi}(D)\setminus{\psi}(\face_w(D))}\hspace*{-2cm}{\psi}^{-1}(x)\cap D\  dx \, .
\end{eqnarray*}
Then replacing the integrals by sums we get the assertion.
\end{proof}

\begin{kor}\label{cor:vector}
 In the case of a mixed cell $C+D$ where $D$ and $\pi^{\circ}(D)$
are one-dimensional and
$\face_w(C)+D$ is a facet of $C+D$, the difference
\[v(C,D,w) \ := \ \Sigma_{\psi}(\face_w(C),D)-\face_w(\Sigma_{\psi}(C,D))\]
is a $0$-dimensional polytope.
\end{kor}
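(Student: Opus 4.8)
The plan is to read off the statement from Theorem~\ref{theo:facemixed}, applied with $\psi=\pi^{\circ}:\R^n\to\R^1$: that identity already expresses the difference between $\Sigma_{\psi}(\face_w(C),\face_w(D))$ and $\face_w\Sigma_{\psi}(C,D)$ as a Minkowski combination of the one-dimensional ``argmax slices'' $[C]_i$, $[D]_i$, $[C+D]_i$, so it suffices to show that, under the present hypotheses, the net contribution of all these slices is a single point. I will do this by first collapsing the $D$-slices using the facet hypothesis, then matching the $(C+D)$-slices with the $C$-slices.

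First I would exploit that $\face_w(C)+D$ is the facet $\face_w(C+D)$. Since faces of a Minkowski sum split, $\face_w(C+D)=\face_w(C)+\face_w(D)$, and cancellativity of Minkowski addition forces $\face_w(D)=D$; equivalently $w^{T}$ is constant on the segment $D$, and $\psi_{\face_w(D)}=\psi_D$, $\psi^{\face_w(D)}=\psi^D$. Substituting this into Theorem~\ref{theo:facemixed}, the two $[D]_i$-sums on the right become empty and $\Sigma_{\psi}(\face_w(C),\face_w(D))=\Sigma_{\psi}(\face_w(C),D)$. Using that $P\mapsto\psi_P$ and $P\mapsto\psi^P$ are additive under Minkowski sums together with $\face_w(C+D)=\face_w(C)+D$, I would reindex the remaining $(C+D)$-sums by $i\mapsto i-\psi_D$ and by $i\mapsto i-\psi^D$, so that they run over exactly the index ranges $\psi_C\le j\le\psi_{\face_w(C)}-1$ and $\psi^{\face_w(C)}\le j\le\psi^C-1$ of the two $[C]_j$-sums, turning the identity into
\[
\Sigma_{\psi}(\face_w(C),D)+\sum_{j=\psi_C}^{\psi_{\face_w(C)}-1}[C+D]_{j+\psi_D}+\sum_{j=\psi^{\face_w(C)}}^{\psi^C-1}[C+D]_{j+\psi^D}=\face_w\Sigma_{\psi}(C,D)+\sum_{j=\psi_C}^{\psi_{\face_w(C)}-1}[C]_j+\sum_{j=\psi^{\face_w(C)}}^{\psi^C-1}[C]_j.
\]

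The heart of the argument, and the step I expect to be the main obstacle, is the identification $[C+D]_{j+\psi_D}=[C]_j+\{d_-\}$ for $j$ in the first range and $[C+D]_{j+\psi^D}=[C]_j+\{d_+\}$ for $j$ in the second, where $d_-$ and $d_+$ are the vertices of $D$ with $\psi(d_-)=\psi_D$ and $\psi(d_+)=\psi^D$; the subtlety is to reconcile the single-slice definition of $[C]_j$ with the thickened slice of $C+D$ one sees after adding the segment $D$. Because $w^{T}$ is constant on $D$, maximizing $w^{T}x$ over $(C+D)\cap\psi^{-1}(j+\tfrac12+\psi_D)$ is the same as maximizing $w^{T}c$ over $\{c\in C:\psi(c)\in[\,j+\tfrac12-(\psi^D-\psi_D),\,j+\tfrac12\,]\}$. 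The function $s\mapsto\max\{w^{T}c:c\in C,\ \psi(c)=s\}$ is concave and piecewise linear, with maximum attained exactly on $\psi(\face_w(C))=[\psi_{\face_w(C)},\psi^{\face_w(C)}]$, hence it is nondecreasing on $[\psi_C,\psi_{\face_w(C)}]$; since $j+\tfrac12<\psi_{\face_w(C)}$ in the first range, the maximum over the thickened slice is attained on the top slice $\psi(c)=j+\tfrac12$, which forces the $D$-coordinate of the maximizer to be $d_-$. The second identification is symmetric, using that the same function is nonincreasing on $[\psi^{\face_w(C)},\psi^C]$. (Here, as in Theorem~\ref{theo:conv}, one uses the standing genericity assumption that all the relevant $\arg\max$'s are unique, so that each $[C]_j$ and $[C+D]_i$ really is a point.)

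Finally, summing the identifications gives $\sum_{j}[C+D]_{j+\psi_D}=\big(\sum_{j}[C]_j\big)+\{(\psi_{\face_w(C)}-\psi_C)\,d_-\}$ and likewise $\sum_{j}[C+D]_{j+\psi^D}=\big(\sum_{j}[C]_j\big)+\{(\psi^C-\psi^{\face_w(C)})\,d_+\}$, over the respective ranges. Plugging these into the displayed identity, the two $\sum_{j}[C]_j$ blocks cancel on both sides by cancellativity, leaving
\[
\Sigma_{\psi}(\face_w(C),D)+\{(\psi_{\face_w(C)}-\psi_C)\,d_-+(\psi^C-\psi^{\face_w(C)})\,d_+\}=\face_w\Sigma_{\psi}(C,D).
\]
Thus $\face_w\Sigma_{\psi}(C,D)$ is a translate of $\Sigma_{\psi}(\face_w(C),D)$, so $v(C,D,w)=\Sigma_{\psi}(\face_w(C),D)-\face_w\Sigma_{\psi}(C,D)$ equals the single point $-\big((\psi_{\face_w(C)}-\psi_C)\,d_-+(\psi^C-\psi^{\face_w(C)})\,d_+\big)$, i.e.\ a $0$-dimensional polytope; in particular this also yields an explicit formula for the patchworking offset.
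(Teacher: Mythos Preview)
Your proof is correct and follows the same skeleton as the paper: apply Theorem~\ref{theo:facemixed}, use the facet hypothesis to deduce $\face_w(D)=D$ so that the two $[D]_i$-sums vanish, and then match the $[C+D]_i$-terms on the left with the $[C]_j$-terms on the right via the additivity of $\psi_P$ and $\psi^P$. The paper stops at that point, simply observing that (under the standing genericity assumption from Theorem~\ref{theo:conv}) each $[P]_i$ is a single point, so the difference of the two sums of points is again a point; you go further and, via the concavity argument on $s\mapsto\max\{w^Tc:c\in C,\ \psi(c)=s\}$, explicitly identify $[C+D]_{j+\psi_D}=[C]_j+\{d_-\}$ and $[C+D]_{j+\psi^D}=[C]_j+\{d_+\}$, thereby producing an explicit formula for the offset $v(C,D,w)$ that the paper does not derive.
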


\begin{proof} Since $\face_w(C) + D$ is a facet, we have $\face_w(D) = D$
and thus the last two terms in Theorem~\ref{theo:facemixed} vanish.
Further, since ${\psi} := \pi^{\circ}$ is a projection to $\RR$,
\begin{eqnarray*}
\min{\psi}(\face_w C+ D)-\min{\psi}(C+D) & = & \min{\psi}(\face_wC)-\min{\psi}(C) \\
\text{ and } \: 
  \max{\psi}(C+D)-\max{\psi}(\face_w C+ D) & = & \max{\psi}(C)-\max{\psi}(\face_wC) \, ,
\end{eqnarray*}
and thus there is a 1-1-correspondence
between the terms of the two sums on the left hand side 
and of the right hand side
of the equation in Theorem~\ref{theo:facemixed}.
Since the difference between the corresponding terms is just a
vector (i.e., a 0-dimensional polytope), the statement follows.
\end{proof}

If $C_1+D_1$ and $C_2+D_2$ are two neighboring cells with respect to a vector $w$ in the Minkowski sum of the two Newton polytopes, then the offset
between the cells
$\Sigma_{\pi^\circ}(C_1,D_1)$ and $\Sigma_{\pi^\circ}(C_2,D_2)$ is 
$v(C_1,D_1,w)-v(C_2,D_2,-w)$.


\begin{bsp} \label{ex:monomialmap}
We consider the 2-adic valuation $\val:\QQ\mapsto\RR_\infty$. 
Let $f_1=x+2y+z-4$, $f_2=3x-y+2z+1$, 
and let \[\pi \, : \, \RR^3 \ \to \ \RR^2,\ x \ \mapsto \ \left(\begin{array}{ccc}1 & 2 & 0\\ 0 & 1 & 1\end{array}\right)\cdot x\]
be a projection with kernel $\langle (2,-1,1)\rangle$. Then a defining polynomial of $\pi^{-1}\pi\T(\langle f_1,f_2\rangle)$ is 
\[g \ := \ -338x-18z^2+483xyz+25yz^3+343y^2x^2 \, . \]

\noindent
After applying the monomial map 
\[B:\QQ[x,y,z] \ \to \ \QQ[x,y],\] \[x \ \mapsto \ x \, ,\; y \ \mapsto \ x^2y,\ z \ \mapsto \ y\]
induced by the projection matrix, we get 
\[B(g) \ := \ -338x-18y^2+483x^3y^2+25x^2y^4+343x^6y^2 \, . \]
This is a polynomial generating the image, $\T(B(g))=\pi(\T(I))$. 
For the subdivided Newton polytope and the tropical variety see Figure \ref{fig1}.

\ifpictures
\begin{figure}[t]
\begin{center}
\begin{minipage}{4cm}
\psfrag{1}{$1$}
\psfrag{x}{$x$}
\psfrag{y}{$y$}
\includegraphics[height=4cm]{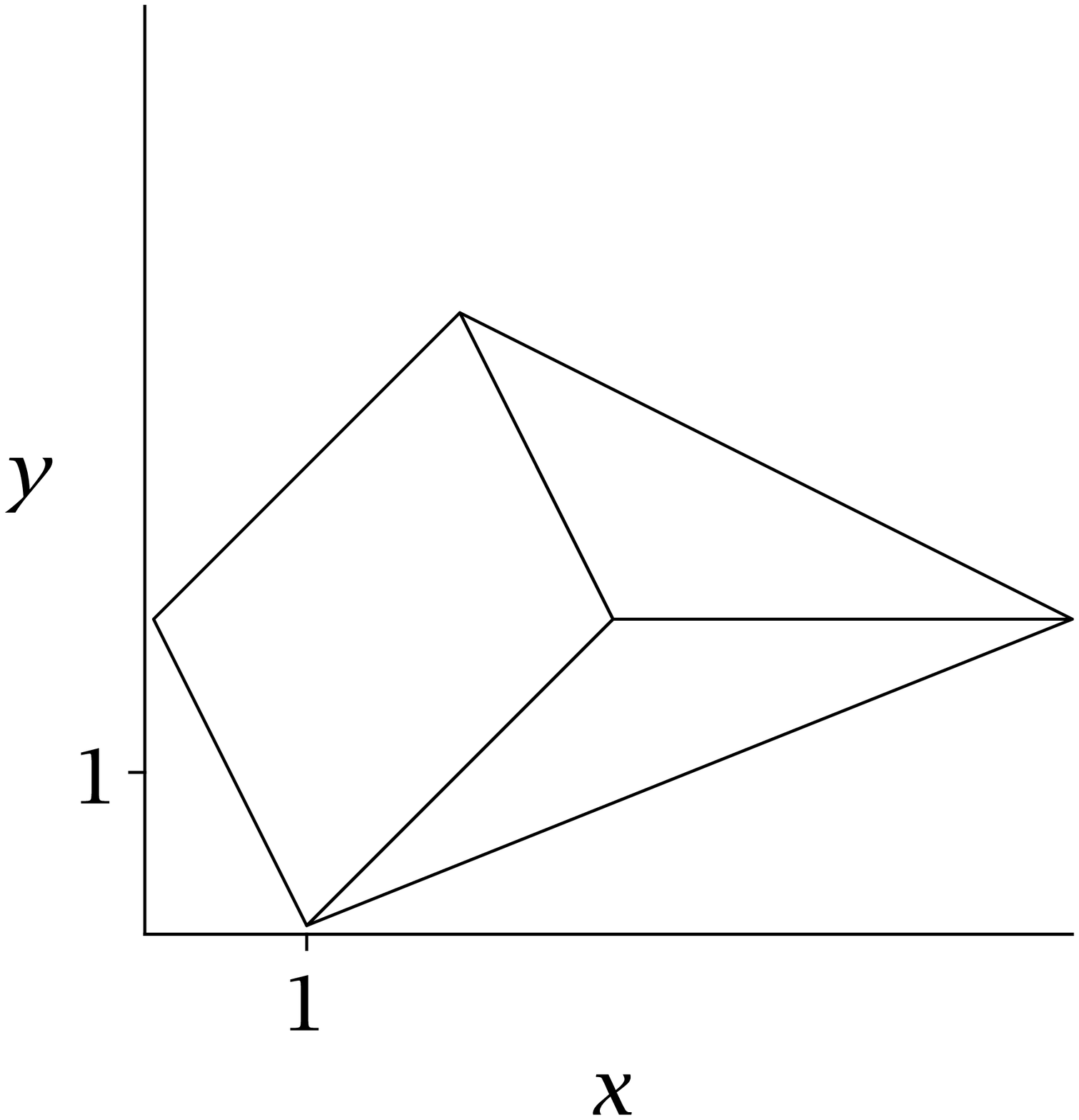}
\end{minipage} \hspace{1cm}
\begin{minipage}{5cm}
  \includegraphics[height=4cm]{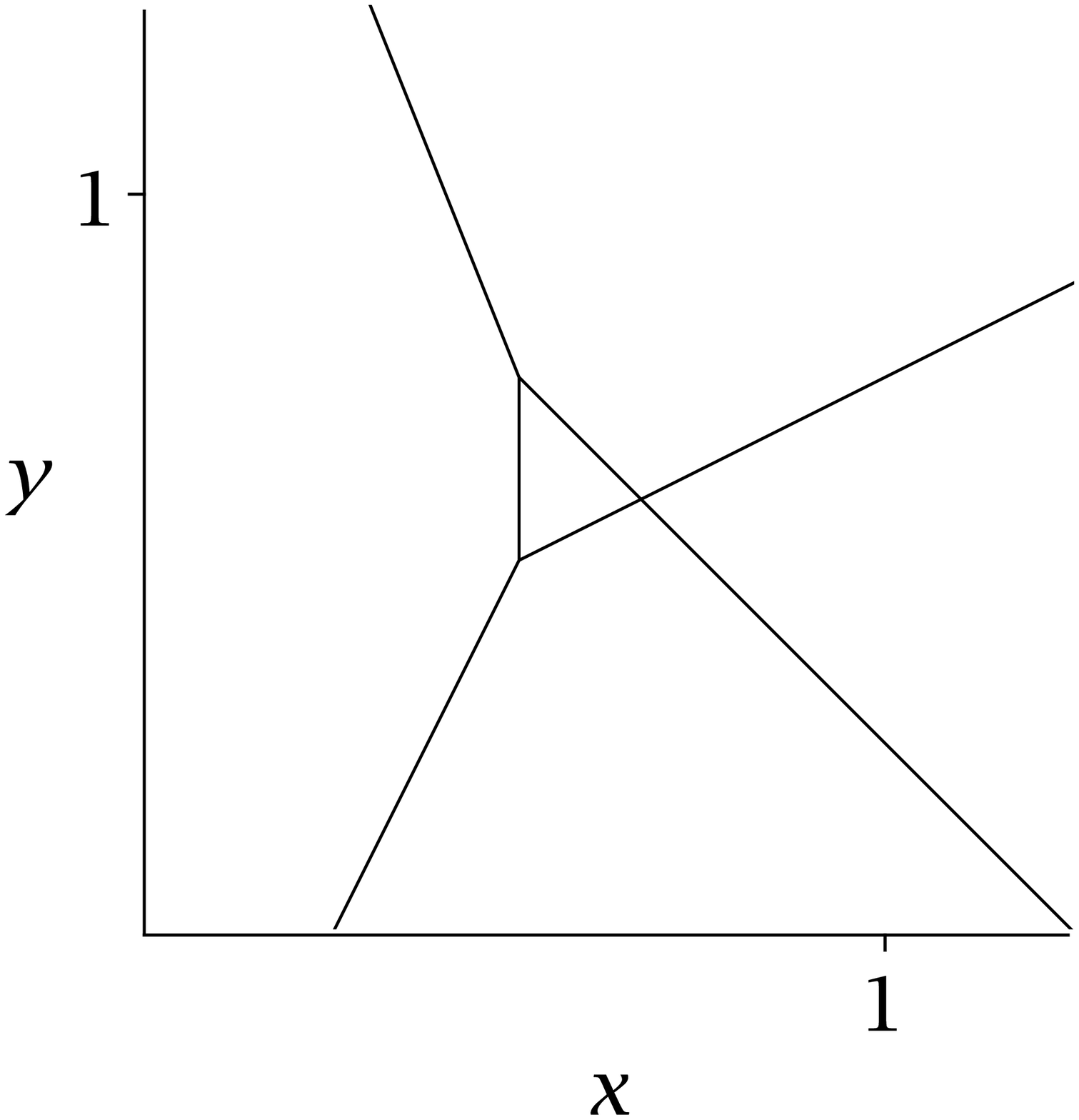}
\end{minipage}
\end{center}

\begin{center}
\begin{minipage}{4cm}
\setlength{\unitlength}{0.8cm}
\begin{picture}(4,5)
\put(0,1){\color{red}\line(0,1){3}}
\put(0,1){\color[rgb]{0,0.5,0}\line(1,0){3}}
\put(0,1){\color{red}\line(1,1){2}}
\put(3,1){\line(-1,1){3}}
\put(3,1){\color[rgb]{0,0.5,0}\line(-1,2){1}}
\put(0,4){\color{red}\line(2,-1){2}}
\put(-0.7,1){$w_0$}
\put(3.2,1){$w_1$}
\put(-0.7,4){$w_3$}
\put(2.3,3){$w_2$}
\put(1.5,1.5){\color[rgb]{0,0.5,0}$F_2$}
\put(0.5,2.5){\color{red}$F_1$}
\put(1.6,2.6){$F_3$}
\put(1.5,0.6){$F_4\uparrow$}
\end{picture}
\end{minipage} \hspace{1cm}
\begin{minipage}{4cm}
\setlength{\unitlength}{0.8cm}
\begin{picture}(4,5)
\put(0,1){\color{red}\line(0,1){3}}
\put(0,1){\color[rgb]{0,0.5,0}\line(1,0){3}}
\put(0,1){\color{red}\line(1,1){2}}
\put(3,1){\line(-1,1){3}}
\put(3,1){\color[rgb]{0,0.5,0}\line(-1,2){1}}
\put(0,4){\color{red}\line(2,-1){2}}
\put(-0.7,1){$v_0$}
\put(3.2,1){$v_1$}
\put(-0.7,4){$v_3$}
\put(2.3,3){$v_2$}
\put(1.5,1.5){\color[rgb]{0,0.5,0}$G_2$}
\put(0.5,2.5){\color{red}$G_1$}
\put(1.6,2.6){$G_3$}
\put(1.5,0.6){$G_4\uparrow$}
\end{picture}
\end{minipage} \qquad
\begin{minipage}{3.5cm}
 \resizebox{3.5cm}{!}{
 \input{Bilder/sumpolytopes.pstex_t}
}
\end{minipage}

\end{center}

\caption{\label{fig1}Top: $\new(B(g))$ and $\T(B(g))$.
Bottom: The Newton polytopes of $f_1$ and $f_2$ and their sum with the corresponding mixed cells.} 
\end{figure}
\fi

In the dual subdivision of $\new(f_1 \cdot f_2)$ there are two mixed $3$-cells which correspond to the two points of the tropical line $\T(\langle f_1,f_2\rangle)$. If the faces of the Newton polytopes of $f_1$ and $f_2$ are denoted as in Figure \ref{fig1} then the topdimensional mixed cells of the subdivision of $\new(f_1)+\new(f_2)$ are $F_3+[v_0,v_2]$ and $G_4+[w_1,w_3]$.

Applying Theorem~\ref{th:alternatingsum}, we compute the
corresponding mixed fiber polytopes $\Sigma_{\pi^\circ}(F_3,$ $[v_0,v_2])$ and $\Sigma_{\pi^\circ}(G_4,[w_1,w_3])$
where ${\pi^\circ}:\RR^3\to\RR,\ x\mapsto (2,-1,1)\cdot x$. Figure~\ref{fig3}
shows these fiber polytopes and their images after 
the translations stemming from Corollary~\ref{cor:vector};
these translations are $(-1,-1)$ and $(1,1)$, respectively.

\ifpictures
\begin{figure}[ht!]
\psfrag{1}{$1$}
\psfrag{x}{$x$}
\psfrag{y}{$y$}
\begin{center}
\begin{minipage}{4cm} 
 \includegraphics[height=4cm]{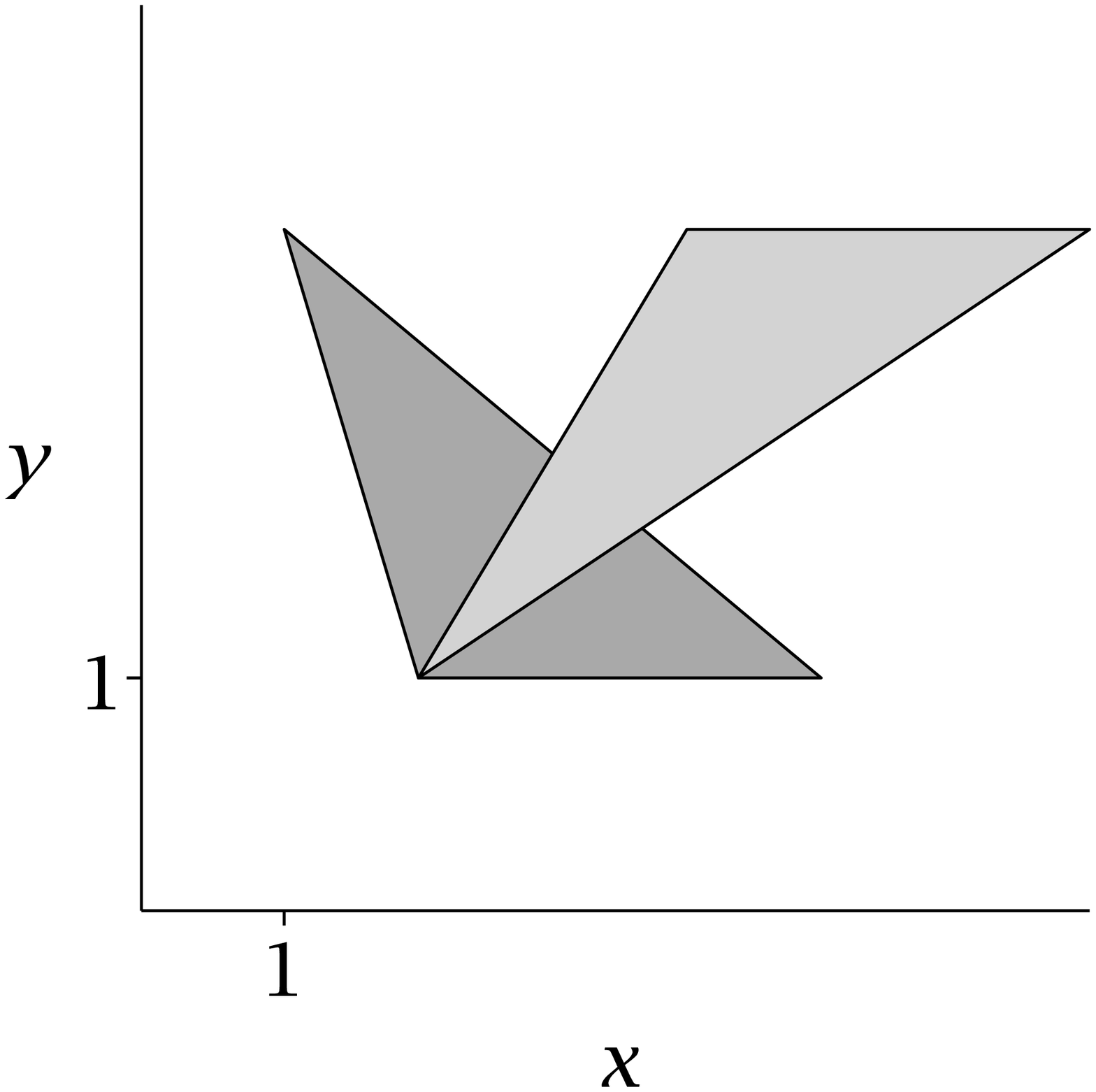}
\end{minipage}\hspace{2cm} 
\begin{minipage}{4cm}
 \includegraphics[height=4cm]{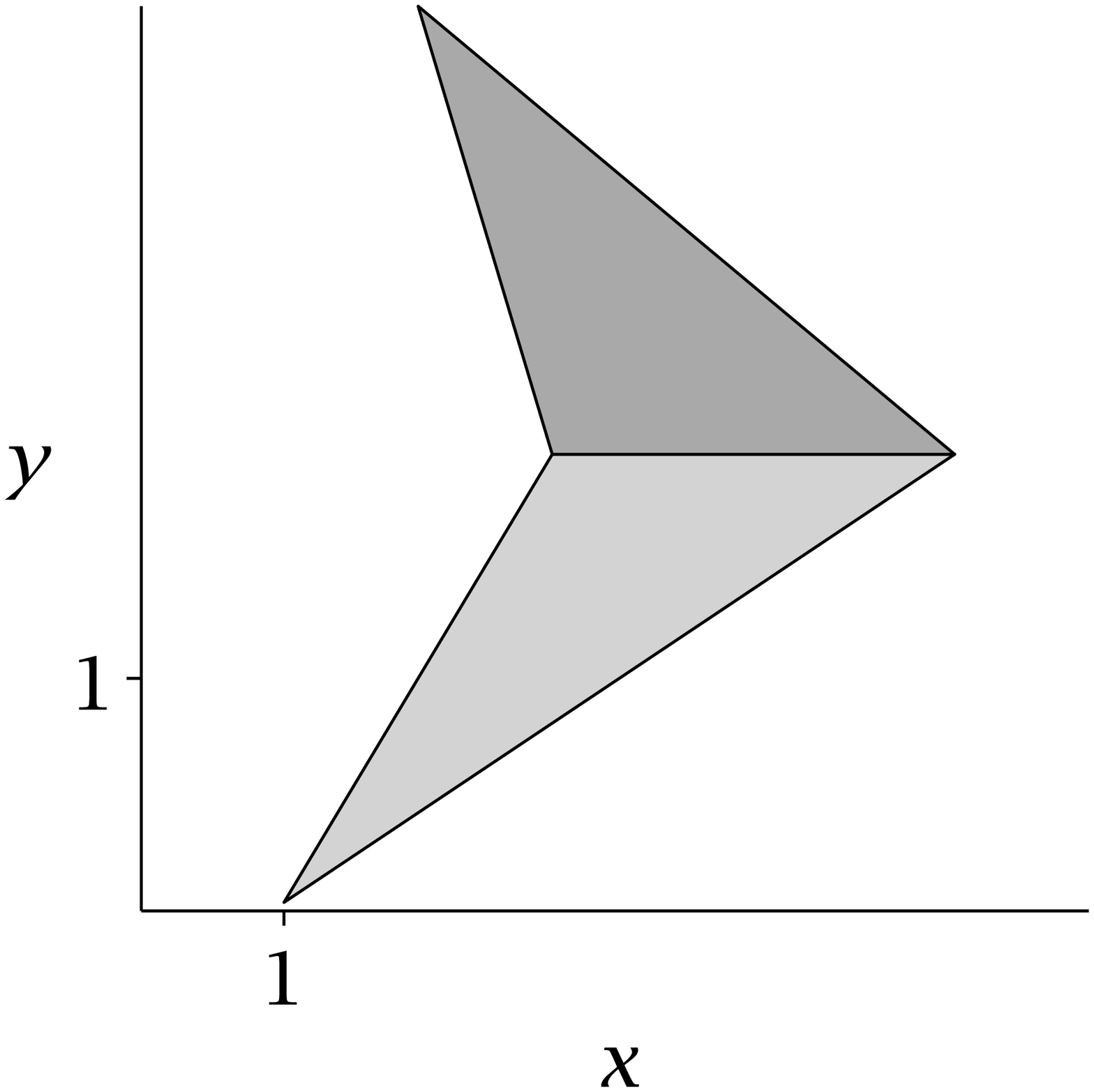}
\end{minipage}
\end{center}
\caption{\label{fig3} The fiber polytopes of the mixed cells} 
\end{figure}
\fi

The quadrangle in the subdivision in left upper picture of Figure~\ref{fig1}
is the dual cell of the self-intersection of the right picture.
The quadrangle is (up a translation) the sum of 
$\Sigma_\alpha([w_1,w_3],[v_0,v_1])$ and $
\Sigma_\alpha([w_2,w_3],[v_0,v_2])$. 
\end{bsp}

\section{Bounds on the number of self-intersections\label{se:bounds}}

In this section, we analyze the projections of tropical curves onto the plane
and derive some bounds on the complexity of the image.

Regarding the combinatorics of tropical curves,
in \cite{steffens-theobald-2009,vigeland-2007}
the number of vertices and the number of edges of a tropical transversal intersection 
curve was computed in dependence of the Newton polytopes of the
underlying tropical hypersurfaces.
Here, we give bounds on the number of vertices resp.\ self-intersections
(as defined in Section~\ref{se:selfinter}) of the image 
of a tropical curve. Most of our results refer to the case of lines.

Recall that a tropical line in $\RR^n$ has $n+1$ half-rays emanating into the directions $e^{(1)},\ldots,e^{(n)}$ and $-\sum_{i=1}^n e^{(i)}$. The combinatorial structure of (non-degenerate) lines has been studied in \cite{speyer-sturmfels-2004}. 
The combinatorial type of a non-degenerate line is a trivalent tree whose leaves are labeled by $1, \ldots, n+1$
(where the label $i$ for $1\leq i \le n$ represents the ray emanating in 
direction $e^{(i)}$ and $n+1$ the ray emanating in direction \mbox{$-\sum_{i=1}^n e^{(i)}$}).
The number of these trees is the Schr\"oder number
\[
  (2n-3)!! \ = \ 1 \cdot 3 \cdot 5 \cdots (2n-5) \cdot (2n-3) \, .
\]

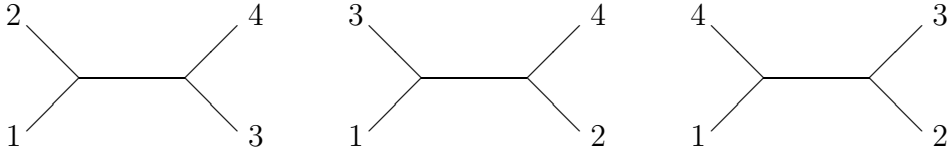
\begin{figure}[ht]\setlength{\unitlength}{0.7cm}
\begin{center}
\begin{minipage}{4cm}
\begin{picture}(4,2.5)
\put(1,1){\color{black}\line(1,0){2}}
\put(1,1){\color{black}\line(-1,1){1}}
\put(1,1){\color{black}\line(-1,-1){1}}
\put(3,1){\color{black}\line(1,1){1}}
\put(3,1){\color{black}\line(1,-1){1}}
\put(-0.4,-0.3){\color{black}$1$}
\put(-0.4,2){\color{black}$2$}
\put(4.2,-0.3){\color{black}$3$}
\put(4.2,2){\color{black}$4$}
\end{picture}
\end{minipage}
\quad
\begin{minipage}{4cm}
\begin{picture}(4,2.5)
\put(1,1){\color{black}\line(1,0){2}}
\put(1,1){\color{black}\line(-1,1){1}}
\put(1,1){\color{black}\line(-1,-1){1}}
\put(3,1){\color{black}\line(1,1){1}}
\put(3,1){\color{black}\line(1,-1){1}}
\put(-0.4,-0.3){\color{black}$1$}
\put(-0.4,2){\color{black}$3$}
\put(4.2,-0.3){\color{black}$2$}
\put(4.2,2){\color{black}$4$}
\end{picture}
\end{minipage}
\quad
\begin{minipage}{4cm}
\begin{picture}(4,2.5)
\put(1,1){\color{black}\line(1,0){2}}
\put(1,1){\color{black}\line(-1,1){1}}
\put(1,1){\color{black}\line(-1,-1){1}}
\put(3,1){\color{black}\line(1,1){1}}
\put(3,1){\color{black}\line(1,-1){1}}
\put(-0.4,-0.3){\color{black}$1$}
\put(-0.4,2){\color{black}$4$}
\put(4.2,-0.3){\color{black}$2$}
\put(4.2,2){\color{black}$3$}
\end{picture}
\end{minipage}
\vspace*{0.2cm}

\end{center}\caption{\label{fi:linetypes}
The three combinatorial types $[12,34]$
$[13,24]$, and $[14,23]$ of a tropical line in $\R^3$.
}
\end{figure}

\begin{bsp}
For a tropical line in $\R^3$ there are three combinatorial types,
as depicted in Figure~\ref{fi:linetypes} (see \cite{rgst}).
In $\RR^4$ there are 15 different non-degenerate types of lines. 
\end{bsp}

A tropical
line is called a \emph{caterpillar} if the graph of its combinatorial type 
has diameter $n$ (and thus is maximally possible). See Figure~\ref{fig:caterpillar}. For $n \in \{3,4\}$
all (non-degenerate) tropical lines are caterpillars.

\ifpictures
\begin{figure}[!ht]
 \begin{center}
  \resizebox{11cm}{!}{\input{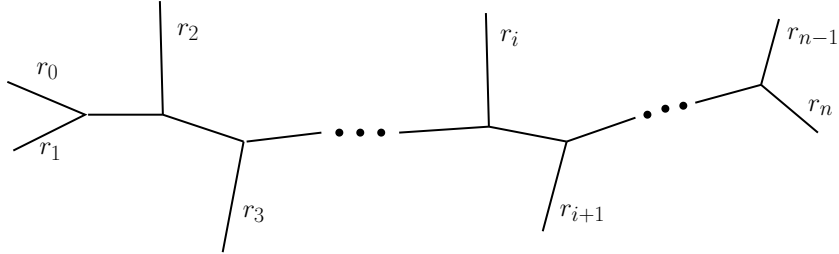}}
 \end{center}
\caption{\label{fig:caterpillar} A caterpillar line $L_n$ in $\R^n$.}
\end{figure}
\fi

\begin{remark}
\label{re:caterpillarcomplete} We remark that
tropical caterpillar lines in $\R^n$ can be written
as a complete intersection of the form $L = \bigcap_{i=1}^{n-1} \mathcal{T}(f_i)$
with linear polynomials $f_1, \ldots, f_{n-1}$. Namely, the
following representation of a caterpillar line $L$
in $\R^5$ with vertices $(0,0,0,0,0)$, $(-1,-1,0,$ $0,0)$,
$(-2,-2,-1,0,0)$,$(-3,-3,-2,-1,0)$ as a complete intersection
$L = \bigcap_{i=1}^{4} \mathcal{T}(f_i)$ generalizes to $\R^n$:
$\trop f_1 = 0\cdot x_1 \oplus 0\cdot x_2 \oplus 0 \cdot x_3 \oplus 1 \cdot x_4$,
$\trop f_2 = 1\cdot x_2 \oplus 0 \cdot x_3 \oplus 0 \cdot x_4$,
$\trop f_3 = 1\cdot x_3 \oplus 0\cdot x_4 \oplus 0 \cdot x_5$,
$\trop f_4 = 1\cdot x_4 \oplus 0\cdot x_5 \oplus 0$, where (for the sake of easier
reading) $\cdot$ denotes tropical multiplication.
We do not know if lines of other combinatorial types can always be written 
as a complete intersection of $n-1$ tropical hyperplanes.
\end{remark}

We describe the following constructions with many self-intersection points
(which can be regarded as lower bounds to the maximum number of
self-intersection points).
The proofs will be given in Sections~\ref{se:lowerbounds}
and \ref{se:upperbounds}.

\begin{theo} \label{theo:lower} For $n\geq 3$ we have:
\begin{enumerate}
\item[(a)] There exist a tropical line $L_n\subseteq\RR^n$ and a rational projection $\pi:\RR^n\to\RR^2$ such that $\pi(L_n)$ has 
\[\sum_{i=1}^{n-2}i \ = \ \left(\atopfrac{n-1}{2}\right)\] self-intersection points. 
\item[(b)] There exists a tropical curve $\mathcal{C}\subseteq\RR^n$ which is a transversal intersection 
of $n-1$ tropical hypersurfaces of degrees $d_1,\ldots,d_{n-1}$ and a rational projection $\pi:\RR^n\to\RR^2$ 
such that $\pi(\mathcal{C})$ has at least \[(d_1 \cdots d_{n-1})^2\cdot \left(\atopfrac{n-1}{2}\right) \] self-intersection points.
\end{enumerate}
\end{theo}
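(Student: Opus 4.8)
The plan is to build both constructions essentially by hand and then verify the self-intersection count combinatorially. For part~(a) I would start with the caterpillar line $L_n \subseteq \R^n$ as pictured in Figure~\ref{fig:caterpillar}, whose rays $r_0, \ldots, r_n$ emanate in the coordinate directions $-\sum_i e^{(i)}$, $e^{(1)}, \ldots, e^{(n)}$ (after a suitable relabelling matching the caterpillar tree). The key point is to choose the rational projection $\pi : \R^n \to \R^2$ cleverly so that the images of as many pairs of non-adjacent rays as possible cross. A natural choice is a ``generic monotone'' projection, e.g.\ $\pi(x) = (x_1, \, c_1 x_1 + \cdots + c_n x_n)$ for a strictly increasing sequence $c_1 < c_2 < \cdots < c_n$ of rationals; under such a $\pi$ one checks that the image of $L_n$ fans out so that ray $i$ and ray $j$ (for $|i-j|\ge 2$, i.e.\ non-adjacent in the caterpillar tree) meet in exactly one point. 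Counting the non-adjacent pairs among a path-like tree with $n+1$ leaves: the $i$-th ``bounded'' direction can be crossed by $i$ of the ray-images that lie ``below'' it, giving $\sum_{i=1}^{n-2} i = \binom{n-1}{2}$ crossings. I would make the crossing claim precise by writing down the parametrised half-lines $\pi(r_j)$ explicitly and solving the resulting $2\times 2$ linear systems, checking that each solution lies in the relevant pair of half-rays and that no three images are concurrent (this last is where the genericity of the $c_i$'s is used).

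For part~(b) the idea is a standard ``multiplicity'' trick: replace each tropical hyperplane $\T(f_i)$ used to cut out a caterpillar-type line by a tropical hypersurface of degree $d_i$ chosen so that the transversal intersection $\mathcal{C} = \bigcap_{i=1}^{n-1} \T(g_i)$ is a disjoint union of $d_1 \cdots d_{n-1}$ translated copies of the line $L_n$ from part~(a), arranged in generic position. Concretely, one takes $g_i$ to be a product-like tropical polynomial whose tropical hypersurface is $d_i$ parallel translates of $\T(f_i)$; by Bernstein/transversality the intersection then consists of $(d_1 \cdots d_{n-1})$ combinatorial lines (the stable intersection count, ignoring weights, with $\deg = 1$ curves this is just $\prod d_i$ for the appropriate degree normalization), and applying the same projection $\pi$ as in part~(a) each copy contributes $\binom{n-1}{2}$ self-intersections. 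Moreover distinct copies, being generic translates, will also cross each other — but to get a \emph{clean lower bound} it suffices to count only the self-intersections \emph{within} each copy together with the pairwise crossings between copies; after perturbing the translates one gets that each \emph{pair} of the $d_1 \cdots d_{n-1}$ copies behaves, under $\pi$, like two generic plane tropical curves and contributes its full complement of transverse crossings. The cleanest count that I would present is: restrict attention to the $(d_1\cdots d_{n-1})^2$ ``blocks'' obtained by pairing copies (including the diagonal, which gives the intra-copy count $\binom{n-1}{2}$), and show each block furnishes at least $\binom{n-1}{2}$ crossing points, all distinct. This yields the bound $(d_1 \cdots d_{n-1})^2 \binom{n-1}{2}$ claimed in the statement.

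The technical heart of both parts is the same bookkeeping: writing each ray/edge image as an explicit parametrised half-line or segment in $\R^2$, solving the pairwise intersection systems, and certifying (via genericity of the projection coefficients and of the translation vectors) that (i) every intended crossing actually occurs on the correct pair of half-rays rather than on their extensions, and (ii) all intersection points are pairwise distinct, so that the count is not diminished by accidental coincidences. For part~(b) there is the additional step of exhibiting the degree-$d_i$ hypersurfaces and checking that the intersection is transversal in the sense of Section~\ref{se:prelim} and decomposes as asserted; here I would invoke the tropical Bernstein theorem together with the transversality criterion (dimensions of dual cells adding up), so that the combinatorial structure of $\mathcal{C}$ is forced and each component is lattice-equivalent to $L_n$.

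The main obstacle I anticipate is part~(b): controlling the self-intersections \emph{between} different translated copies of $L_n$ simultaneously with those \emph{within} a single copy, while guaranteeing all these points remain distinct after projection. Getting a uniform ``at least $\binom{n-1}{2}$ per block'' statement requires choosing the translation vectors and the projection jointly generically, and one must argue that the $\binom{n-1}{2}$ crossings persist under small perturbation — essentially a transversality/openness argument. For part~(a) the only delicate point is verifying that the caterpillar combinatorics, rather than some other tree type, maximises the non-adjacent-pair count realisable by a single monotone projection; this I would handle by the direct case analysis sketched above.
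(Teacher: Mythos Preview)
Your plan for part~(a) has a genuine gap. The explicit projection you propose, $\pi(x) = (x_1,\, c_1 x_1 + \cdots + c_n x_n)$, sends every coordinate direction $e^{(i)}$ with $i \ge 2$ to the direction $(0,c_i)$; these are all vertical, hence the images of the rays $r_2,\ldots,r_n$ (in your labelling) are pairwise parallel half-lines in $\R^2$ and cannot cross one another at all. More generally, any projection whose first row is a single coordinate functional will collapse $n-1$ of the coordinate rays to parallel directions. Your combinatorial bookkeeping is also inconsistent: you first assert that all ray pairs with $|i-j|\ge 2$ cross, but among $r_0,\ldots,r_n$ there are $\binom{n}{2}$ such pairs, not $\binom{n-1}{2}$; a sentence later you switch to ``the $i$-th bounded direction can be crossed by $i$ of the ray-images,'' which is a different mechanism entirely. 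The paper does not attempt a closed-form projection. Instead it proves a stronger auxiliary lemma by induction on $n$: assuming a good projection $\pi'$ for $L_n$ exists (with the crossing points $p_i = \pi'(r_i)\cap\pi'(r_1)$ arranged \emph{in order} along $\pi'(r_1)$ and the $\pi'(r_i)$ disjoint from the bounded edges), one first applies the coordinate projection $\sigma:\R^{n+1}\to\R^n$ forgetting the last coordinate, composes with $\pi'$, and then adjusts the last column of the resulting $2\times(n+1)$ matrix so that the image of the new ray $r_n$ runs below the bounded edges and crosses every $\pi(r_i)$ for $1\le i\le n-1$. This adds $n-1$ new crossings to the $\binom{n-1}{2}$ provided by induction, giving $\binom{n}{2}$. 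The ordering hypothesis on the $p_i$ is what makes the induction close.

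For part~(b) your construction --- replace each hyperplane $\T(f_i)$ by a product of $d_i$ perturbed translates --- is exactly the paper's. The counting you sketch via ``$(d_1\cdots d_{n-1})^2$ blocks'' can be made to work but is more delicate than necessary, and as written it is ambiguous whether the blocks are ordered or unordered pairs of copies (two distinct copies contribute $2\binom{n-1}{2}$ crossings between them, not $\binom{n-1}{2}$, so the unordered reading undercounts). The paper bypasses this by observing that in the part~(a) construction every self-intersection is a crossing of two \emph{unbounded rays}; after taking products, each such ray is replaced by $d_1\cdots d_{n-1}$ parallel copies, and a single original crossing therefore becomes $(d_1\cdots d_{n-1})^2$ crossings (every copy of the first ray meets every copy of the second). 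This gives the bound in one line, with no need to separate intra- and inter-copy contributions.
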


As an explicit upper bound, we show the following theorem
on the (unweighted) number of self-intersection points for 
caterpillar lines.

\begin{theo} \label{theo:upper} The image of a tropical line $L_n$ in $\RR^n$ which is a caterpillar can have at most $\sum_{i=1}^{n-2}i\ =\ \left(\atopfrac{n-1}{2}\right)$ self-intersection points. This bound is tight.
\end{theo}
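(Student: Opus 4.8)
The statement has two halves: the upper bound $\binom{n-1}{2}$ on self-intersections of a projected caterpillar line, and its tightness. Tightness follows immediately from Theorem~\ref{theo:lower}(a): the construction there produces a tropical \emph{line} $L_n$, and one checks that this line is a caterpillar (its combinatorial type has diameter $n$), so the $\binom{n-1}{2}$ self-intersection points realize the bound. Thus the real content is the upper bound, and I would devote the proof to that.

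\textbf{The upper bound.} A caterpillar line $L_n\subseteq\R^n$ has $n+1$ half-rays $r_0,\dots,r_n$ with $r_1,\dots,r_n$ in directions $e^{(1)},\dots,e^{(n)}$ (after relabeling) and $r_0$ in direction $-\sum e^{(i)}$, strung along a path of $n-1$ internal vertices. A self-intersection point under $\pi:\R^n\to\R^2$ comes from two \emph{non-adjacent} faces whose images meet; since all bounded edges of the path contract or stay bounded but the rays are what can wander off, the dominant contributions — and in fact all of them, after a careful case analysis — come from pairs of half-rays. The key combinatorial fact is: if $r_a$ and $r_b$ are two half-rays that are non-adjacent in the caterpillar tree, their images $\pi(r_a)$ and $\pi(r_b)$ are rays (or segments) in $\R^2$ and can intersect in at most one point. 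So the number of self-intersections is at most the number of non-adjacent pairs of half-rays, \emph{minus} those pairs which, for directional reasons, cannot possibly cross. I would first bound crudely by the number of non-adjacent leaf-pairs in a trivalent tree with $n+1$ leaves and show this already has the right shape, then tighten.

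More precisely, I would argue by induction on $n$, peeling off the outermost ``tooth'' of the caterpillar. Write $L_n$ as the cone over $L_{n-1}$ glued with one extra pendant ray at the last internal vertex, or use the complete-intersection presentation from Remark~\ref{re:caterpillarcomplete}: $L_n=\bigcap_{i=1}^{n-1}\T(f_i)$ with the explicit linear $f_i$. The newly added half-ray $r_n$ (say in direction $e^{(n)}$) can cross at most the images of the half-rays $r_1,\dots,r_{n-2}$ that lie ``before'' it along the path — the ray $r_{n-1}$ is adjacent to it, and one further ray is excluded because its image, together with $\pi(r_n)$, points into a common half-plane boundary and they are arranged so as not to cross (this is where the caterpillar structure, as opposed to a general tree, is essential). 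That gives at most $n-2$ new self-intersection points, and by induction the sub-line contributes at most $\binom{n-2}{2}$, so the total is at most $\binom{n-2}{2}+(n-2)=\binom{n-1}{2}$.

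\textbf{Main obstacle.} The delicate point is the claim that exactly one ray (beyond the adjacent one) is excluded from crossing each newly added tooth — equivalently, that the cyclic order in which the $n+1$ directions $\pi(e^{(1)}),\dots,\pi(e^{(n)}),-\sum\pi(e^{(i)})$ emanate around the origin in $\R^2$ interacts with the caterpillar's path order so that adjacent-in-the-path rays whose images are adjacent-in-the-cyclic-order never contribute. Handling this cleanly probably requires fixing coordinates: after an affine change, assume $\pi$ is a coordinate-type projection, order the images of the basis directions by angle, and then track, for each half-ray, the half-plane swept by the path segment attaching it. The self-intersection points are precisely the ``inversions'' between the path order and the angular order, and counting inversions of a permutation of $n+1$ elements with the one forced agreement at the tree's spine gives $\binom{n-1}{2}$. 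I would state this as a lemma on permutations and inversions and reduce the geometric statement to it; that reduction, rather than the permutation count, is where the care is needed, since one must rule out self-intersections among bounded edges and mixed ray/edge pairs as well.
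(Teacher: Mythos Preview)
Your proposal has a genuine gap: the claim that ``in fact all of [the self-intersections], after a careful case analysis, come from pairs of half-rays'' is false, and the rest of your argument leans on it. Bounded edges of the caterpillar can and do participate in self-intersections under a generic projection --- the paper's own discussion of the case $n=4$ already splits into subcases according to whether the bounded part $B$ is hit by the two outer pairs of rays, and Example~\ref{ex:resultant} exhibits self-intersections of bounded-edge/ray type explicitly. So the reduction to an inversion count on the cyclic order of the ray directions cannot work as stated: that count ignores the positions of the internal vertices, which govern whether and where bounded edges cross other edges. Your final sentence concedes this (``one must rule out self-intersections among bounded edges and mixed ray/edge pairs as well''), but that is precisely the point that cannot be ruled out and must instead be \emph{bounded}.

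The paper's inductive step is organized differently and is designed to cope with bounded edges. Rather than peeling off a single tooth, it removes the two rays $r_0,r_1$ and the edge $e_1$ at one end and replaces them by a single ray $\tilde r$ in the direction of $e_1$, yielding a projected caterpillar in one dimension lower. The quantity to control is then $\#\SIP_{\pi(r_0)}+\#\SIP_{\pi(r_1)}-\#\SIP_{\pi'(\tilde r)}\le n-1$. This is achieved by assigning to every edge $c$ (bounded or not) a contribution $\alpha(c)\in\{-1,0,1\}$, and building a combinatorial map from rays of contribution $1$ to segments of nonpositive contribution with controlled fiber sizes; total concavity of the local cones is the geometric input that forces enough segments of nonpositive contribution to exist. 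Your ``one further ray is excluded because its image points into a common half-plane'' is a shadow of this concavity argument, but it does not by itself give the inductive inequality once bounded edges are in play.
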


By our earlier remark, in dimensions 3 and 4 this theorem covers 
all (non-degenerate) lines.
We conjecture that the upper bound in Theorem~\ref{theo:upper}
also holds for non-caterpillar lines in general dimension.

\begin{conj} \label{conj:upper} The image of a tropical line $L_n$ in $\RR^n$ can have at most 
$\sum_{i=1}^{n-2}i\ =\ \left(\atopfrac{n-1}{2}\right)$ self-intersection points.
\end{conj}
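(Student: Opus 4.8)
The plan is to reduce the statement to a purely combinatorial crossing count and then bound that count by induction on $n$. Regard $\pi(L_n)$ as a straight-line realization in $\R^2$ of the trivalent tree $T$ underlying the combinatorial type of $L_n$: the $n+1$ leaves are sent to the unbounded rays in the directions $\pi(e^{(1)}),\dots,\pi(e^{(n)}),\pi(-\sum_i e^{(i)})$ (which, being generic, sit in some cyclic order at infinity), and $T$ has $n-2$ internal edges, realized as bounded segments. Let $c$ denote the number of pairs of non-adjacent edges of $L_n$ whose images cross. Any two straight segments meet in at most one point, so distinct self-intersection points correspond to distinct crossing pairs; hence the number $s$ of self-intersection points satisfies $s\le c$, and it suffices to show $c\le\binom{n-1}{2}$ for every rational $\pi$. (For generic $\pi$ all crossings are transversal double points and $s=c=b_1(\pi(L_n))$, since $L_n$ is a tree of Euler characteristic $1$ and each double point drops $\chi$ by one; this is what makes $\binom{n-1}{2}$ the right target, matching the construction of Theorem~\ref{theo:lower}(a).)

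For the induction I would use the identity $\binom{n-1}{2}-\binom{n-2}{2}=n-2$, with base case $n=3$ (two non-adjacent rays of a tropical line in $\R^3$ meet in at most one point, see Figure~\ref{fig:sip}). The inductive step rests on the following claim: there is a leaf $\ell$ of $T$ such that deleting its ray and contracting the resulting bivalent vertex yields a straight-line realization of a tropical line $L_{n-1}$ in one dimension less, and this operation destroys at most $n-2$ crossing pairs. Equivalently, the deleted ray together with the edge created by the contraction is involved in at most $n-2$ crossings. Granting the claim, $L_{n-1}$ has crossing count at most $\binom{n-2}{2}$ by the induction hypothesis, and restoring $\ell$ adds at most $n-2$, giving $c\le\binom{n-2}{2}+(n-2)=\binom{n-1}{2}$.

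The hard part is exactly this claim, and it is what separates the general conjecture from the caterpillar case of Theorem~\ref{theo:upper}. For a caterpillar the internal edges form a path, which linearly orders the leaves hanging off the spine; deleting an outermost leaf then visibly removes at most $n-2$ crossings, and this is the mechanism behind the tight bound of Theorem~\ref{theo:upper}. For a general trivalent tree the internal tree branches, so it is no longer clear which leaf to delete, nor how to certify that a single straight ray crosses at most $n-2$ of the remaining $2n-2$ edges. A natural auxiliary tool is the dual description of Theorem~\ref{ko:subdivision}: a self-intersection is a cell with $p\ge 2$ summands in~\eqref{eq:p}, i.e.\ a coincidence $\pi(F_i)=\pi(F_j)$ of images of non-adjacent faces, so one could instead bound the number of such coincident pairs directly from the fan $\LC_v(\T(I))$ and the cyclic order induced by $\pi$ at infinity. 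I expect that controlling this count uniformly across all branching patterns --- equivalently, proving the $n-2$ bound for a well-chosen leaf --- is the principal obstacle. I note finally that the superficially attractive route of bounding $b_1(\pi(L_n))$ by the number of interior lattice points of $\new(f)$ does not help: choosing $\pi$ with large entries inflates the edge lattice lengths of this Newton polygon, and hence its interior-point count, without creating new self-intersection points, so that bound is far from $\binom{n-1}{2}$ and the argument must remain combinatorial.
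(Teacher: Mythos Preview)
The statement you are trying to prove is labeled a \emph{Conjecture} in the paper and is left open there; the paper proves only the caterpillar case (Theorem~\ref{theo:upper}). So there is no proof in the paper to compare against, and your write-up is, as you yourself say, an outline with an unproved core step rather than a proof. You have correctly isolated where the difficulty sits: the existence of a leaf whose removal costs at most $n-2$ crossings. That is precisely the gap separating Theorem~\ref{theo:upper} from Conjecture~\ref{conj:upper}, and nothing in your proposal closes it.

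There is also a technical slip in your reduction that is worth flagging, because it matters if one ever tries to push the argument through. Deleting a single leaf ray and ``contracting the resulting bivalent vertex'' does \emph{not} produce a straight-line realization of a tropical line $L_{n-1}$: by the balancing condition at a trivalent vertex the three outgoing directions sum to zero, so after removing one of them the remaining two are never opposite, and the merged edge is genuinely bent. Any straightening step would itself change the crossing count and would have to be controlled. The paper's inductive reduction for caterpillars is different and avoids this: it removes \emph{two} adjacent leaf rays $r_0,r_1$ together with the segment $e_1$ between their common vertex $v$ and the next vertex $w$, and replaces them by a single ray from $\pi(w)$ in the direction of $\pi(e_1)$; balancing at $v$ guarantees this really is the image of a tropical line in one dimension less. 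The subsequent bookkeeping (the contribution function $\alpha$ and the assignment of rays to segments along the spine) uses the linear order of the caterpillar's internal edges in an essential way, which is exactly why the argument does not extend to branching trees and why the general statement remains a conjecture.
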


\subsection{Constructions with many 
self-intersection points\label{se:lowerbounds}}

In order to prove the first part of Theorem~\ref{theo:lower}, we start with 
the special case $n=3$. Then the general assertion will be
proven inductively.

\begin{bsp} \label{ex:lower3}
We consider in detail the case of a tropical line in $\RR^3$.
Let $\pi:\RR^3\to\RR^2$ be a rational projection, $M$ be
a matrix representing $\pi$, and $v = (v_1,v_2,v_3)^T$ be a vector
spanning the kernel of $\pi$.
In the case $v_1 = 0$ a projection with kernel generated by $(0,v_2,v_3)$ 
is degenerate.
If $v_1\not=0$ then $\pi$ can be described by the matrix
\[M=\left(\begin{array}{ccc}x & 1 & 0\\ y & 0 & 1\end{array}\right)\]
with $x,y\in \QQ$.
Let $L$ be a tropical line in $\RR^3$ of type $[12,34]$, i.e. a line with vertices
\[(p_1,p_2,p_3),\ (p_1+a,p_2+a,p_3) \, . \]
For simplicity, we consider the situation $a=1$ and $p_i=0$, $i \in \{1,2,3\}$.
There are four combinatorial possibilities for an intersection 
in the image of $\pi$: $\{1,3\}$, $\{1,4\}$, $\{2,3\}$, $\{2,4\}$.
A straightforward computation shows that, say, the
rays in directions $e^{(2)}$ and $e^{(3)}$ intersect in their interiors
if and only
$x < -1$ and $y > 0$.
In particular, there is a tropical line and a projection with one
self-intersection point.
\end{bsp}

In order to prove the first part of Theorem \ref{theo:lower} for general dimension,
we show the following stronger result. 
Let $L_n$ be a tropical caterpillar line in $\R^n$.
Let $r_i$ be the half-ray of $L_n$ emanating in direction $e^{(i+1)}$,
$0 \le i \le n-1$, and $r_n$ be the half-ray emanating in direction
$- \sum_{i=1}^n e^{(i)}$.
Without loss of generality we can assume $r_0 \cap r_1 \neq \emptyset$
and $r_{n-1} \cap r_{n} \neq \emptyset$ (see Figure~\ref{fig:caterpillar}).

\begin{lemma}
There is a projection $\pi:\RR^n\to \RR^2$ such that each ray $r_i$,
$2 \le i \le n-1$, intersects in the image with the ray $r_1$, 
such that the intersection point $p_i := \pi(r_i)\cap\pi(r_1)$ 
lies between $p_{i-1}$ and $p_{i+1}$ for $i>2$ and all 
images $\pi(r_i)$, $2 \le i \le n-1$, do not intersect with the images of 
the bounded edges.
\end{lemma}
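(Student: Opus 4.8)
The plan is to write down one explicit rational projection and check all the assertions by a handful of elementary inequalities. First I would normalise the situation. By the standing hypotheses $r_0\cap r_1\neq\emptyset$ and $r_{n-1}\cap r_n\neq\emptyset$, and after relabelling the remaining ($=$ middle) leaves and translating $L_n$ (which only translates $\pi(L_n)$ and leaves every assertion invariant), $L_n$ is the caterpillar of Remark~\ref{re:caterpillarcomplete}: its vertices are $q_0=0$ and $q_j=q_{j-1}-\ell_j\big(e^{(1)}+\dots+e^{(j+1)}\big)$ for $1\le j\le n-2$, with arbitrary edge lengths $\ell_j>0$; the half-ray $r_i$ emanates from $q_{i-1}$ in direction $e^{(i+1)}$ for $1\le i\le n-1$, $r_0$ emanates from $q_0$ in direction $e^{(1)}$, $r_n$ emanates from $q_{n-2}$, and the bounded edges are the segments $[q_{j-1},q_j]$.

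Next I would take $\pi:\R^n\to\R^2$ to be the rational rank-$2$ map with columns $\pi(e^{(1)})=(-1,0)$, $\pi(e^{(2)})=(0,1)$ and $\pi(e^{(k)})=(-\tfrac1k,1)$ for $3\le k\le n$. One checks at once that the $n+1$ ray directions of $L_n$ have pairwise distinct nonzero images under $\pi$, that $\pi(q_0)=0$, and that $\pi(r_1)$ is the positive $x_2$-half-axis. Write $\pi(q_m)=(X_m,Y_m)$ and $a_l:=1+\sum_{k=3}^{l+1}\tfrac1k\ge1$. Since $\pi(e^{(1)}+\dots+e^{(l+1)})=(-a_l,\,l)$, the telescoping identity $q_m-q_{i-1}=\sum_{l=m+1}^{i-1}\ell_l\big(e^{(1)}+\dots+e^{(l+1)}\big)$ yields $X_{i-1}-X_m=\sum_{l=m+1}^{i-1}\ell_l a_l$ and $Y_m-Y_{i-1}=\sum_{l=m+1}^{i-1}\ell_l\,l$; in particular $X_{i-1}>0>Y_{i-1}$.

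Everything then reduces to the single observation that $(i+1)a_l>l$ whenever $l\le i-1$ (since $a_l\ge1$ and $i+1>l$). The ray $\pi(r_i)=\{(X_{i-1}-\tfrac{s}{i+1},\,Y_{i-1}+s):s\ge0\}$ meets the line $x_1=0$ at parameter $s_i=(i+1)X_{i-1}>0$, hence in its relative interior, and at height $p_i=Y_{i-1}+(i+1)X_{i-1}=\sum_{m=1}^{i-1}\ell_m\big((i+1)a_m-m\big)>0$; thus $\pi(r_i)\cap\pi(r_1)=\{p_i\}$ lies in the relative interior of $\pi(r_1)$. Moreover $p_{i+1}-p_i=\ell_i\big((i+2)a_i-i\big)+\sum_{m=1}^{i-1}\ell_m a_m>0$, so $p_2<p_3<\dots<p_{n-1}$ and each $p_i$ lies between $p_{i-1}$ and $p_{i+1}$. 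Finally, for $m\le i-1$ the (affine) height of $\pi(r_i)$ at $x_1=X_m$ equals $Y_{i-1}+(i+1)(X_{i-1}-X_m)=Y_m+\sum_{l=m+1}^{i-1}\ell_l\big((i+1)a_l-l\big)$, so $\pi(r_i)$ runs strictly above $\pi(q_m)$ unless $m=i-1$ (where $\pi(r_i)$ starts); since the difference of two affine functions is monotone on an interval, $\pi(r_i)$ stays strictly above every bounded edge $[q_{m-1},q_m]$ with $m\le i-1$, touching only the incident edge $[q_{i-2},q_{i-1}]$ and only at the shared vertex $\pi(q_{i-1})$, while every edge $[q_{m-1},q_m]$ with $m\ge i$ lies in $\{x_1\ge X_{i-1}\}$ and meets $\pi(r_i)$ at most at its base point (the incident edge $[q_{i-1},q_i]$). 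Hence no $\pi(r_i)$, $2\le i\le n-1$, meets a bounded edge to which it is not incident.

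The step I expect to be the genuine obstacle is the last one: a ray $\pi(r_i)$ emanates ``backward'' over the part of $\pi(L_n)$ that is already built, so one must rule out that it re-enters the region swept by the bounded edges; this is precisely why I choose the columns with the strictly decreasing defects $1/k$, which funnels this — together with the crossing heights $p_i$ and their order — through the one inequality $(i+1)a_l>l$. Because all these inequalities are strict, the $p_i$ are automatically distinct and no ray passes through a vertex other than its own, so no further genericity perturbation is needed; and since every displacement above involves the edge lengths $\ell_j$ only with nonnegative coefficients, the argument applies to the given caterpillar whatever its edge lengths are.
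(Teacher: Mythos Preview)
Your argument is correct and takes a genuinely different route from the paper's. The paper proceeds by induction on $n$: it projects $L_{n+1}$ to $L_n$ by dropping the last coordinate, invokes the induction hypothesis to obtain $\pi'$ on $\R^n$, and then perturbs the last column of $\pi'\circ\sigma$ so that the new ray crosses $\pi(r_1)$ above all previous crossings and stays below the bounded part. You instead write down a single explicit rational projection for all $n$ at once and reduce every claim to the clean inequality $(i+1)a_l>l$ for $l\le i-1$. This buys you complete explicitness (rational entries $-1/k$, no appeal to ``we can choose'' arguments), uniform control over the edge lengths $\ell_j$, and a tidy monotone structure on the $X_m$ and $p_i$; the paper's inductive approach, by contrast, is more conceptual but leaves the actual projection implicit and the verification of non-intersection with bounded edges to a picture.

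One small point worth tightening: your ``relabelling the middle leaves'' step is really a coordinate permutation, and while it preserves the set $\{r_2,\dots,r_{n-2}\}$ and the ray $r_1$, it does not a priori preserve the \emph{index ordering} of the $p_i$ demanded by the lemma---after undoing the permutation, your construction yields the $p_i$ monotone in \emph{spine position} rather than in the label $i$. For the standard caterpillar (the one in Remark~\ref{re:caterpillarcomplete}) these coincide, so your proof is complete there, and that case already suffices for Theorem~\ref{theo:lower}(a). If you want the lemma literally for an arbitrary caterpillar satisfying only the endpoint hypothesis, assign the slope $-1/k$ to the ray at spine position $k$ rather than to $e^{(k)}$; your inequalities then go through verbatim. (The paper's inductive proof has an analogous implicit assumption when it applies $\sigma$.)
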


\begin{proof}
The proof is by induction, where the case $n=3$ is clear from Example~\ref{ex:lower3}.
Let now $n+1$ be arbitrary.
Map the line with the projection $\sigma$ omitting the last
coordinate,
\[\sigma: \RR^{n+1}\to\RR^{n},\ (x_1, \ldots, x_{n+1}) \mapsto
  (x_1, \ldots, x_n) \, .
\]
Then $L_{n+1}$ is mapped to the nondegenerate line $L_n$. So by assumption there is a projection $\pi':\RR^{n}\to\RR^2$ satisfying the assertion of
the Lemma in dimension~$n$. 
Then the composition $\pi' \circ \sigma:\RR^{n+1}\to\RR^2$ maps $r_{n+1}$ to a point on $\pi'(r_n)$. The corresponding matrix of $\pi' \circ \sigma$ 
has the form 
\[A_{\pi' \circ \sigma}=\left(\begin{matrix}a'_{11} & \ldots & a'_{1 n+1} & 0\\ a'_{21} & \ldots & a'_{2 n+1} & 0\end{matrix}\right).\] 
Figure \ref{piLm-1} shows an example for $\pi'(L_{n})$ for $n=4$.

\ifpictures
\begin{figure}[ht!]\setlength{\unitlength}{0.7cm}
\begin{center}
 \resizebox{7cm}{!}{
 \input{Bilder/fig21.pstex_t}}
\end{center}\caption{\label{piLm-1} $\pi'(L_{4})$}
\end{figure}
\fi

Let $\pi:\RR^{n+1}\to\RR^2$ be defined by a matrix with the same columns as $A_{\pi' \circ \sigma}$ except the last one. 
Due to the balancing condition we can choose 
the image of the coordinate vector
$e^{(n+1)}$ such that the ray $r_{n}$ is mapped to a 
ray which (in the two-dimensional picture) lies below the 
images of the bounded edges and has an intersection point $p_{n}$ with 
$\pi(r_1)=\pi' \circ \sigma(r_1)$ lying above $p_{n-1}$.  
By the induction assumption $\pi(r_{n})$ intersects with all 
$\pi(r_i), 1\leq i\leq n-1$.
So $\pi(r_{n})$ has $n-1$ self-intersection points. Altogether there are \[\sum_{i=1}^{n-2}i+(n-1)\ =\ \sum_{i=1}^{n-1}i\] intersection points under $\pi$.
\end{proof}

In order to show the second part of Theorem~\ref{theo:lower}, we observe
that by Remark~\ref{re:caterpillarcomplete}
the tropical line $L \subseteq \R^n$ from part a) (which was a
caterpillar line) can be written as 
a complete intersection of the form $L \ = \ \bigcap_{i=1}^{n-1} \T(f_i)$
with linear polynomials $f_1, \ldots, f_{n-1}$.
Pick such a line $L$ from the first part having $\binom{n-1}{2}$ self-intersection points
under $\pi$ and note that in that construction all intersection points
occur on unbounded
rays. Let $g_i$ be the product of $n$ perturbed copies of $f_i$. Since in this
way every unbounded ray is transformed into $d$ copies,
every self-intersection point is transformed into $d_1^2 \cdots d_{n-1}^2$ copies.
Hence, this gives us $(d_1 \cdots d_{n-1})^2 \binom{n-1}{2}$ self-intersection
points (and there could be more, stemming from intersections involving
unbounded edges).

\begin{bsp}\label{ex:resultant}
Let $K=\C\{\{t\}\}$ the field of Puiseux series with the natural valuation,
and let 
\begin{eqnarray*}
f_1 & = & (t^{\epsilon_1}x+t^{\epsilon_2}y+t^{1+\epsilon_3}z+t^{3+\epsilon_4})\cdot (x+y+tz+t^3)\\
f_2 & = & (t^{1+\delta_1}x+t^{\delta_2}y+t^{\delta_3}z+t^{\delta_4})\cdot (tx+y+z+1)\\
\end{eqnarray*}
with 
$\varepsilon_1 := \frac{1}{1000}$, 
$\varepsilon_2 := \frac{3}{1000}$, 
$\varepsilon_3 := \frac{5}{1000}$,
$\varepsilon_4 := \frac{7}{1000}$, 
$\delta_1 := \frac{11}{1000}$, 
$\delta_2 := \frac{13}{1000}$, 
$\delta_3 := \frac{17}{1000}$, 
$\delta_4 := \frac{1}{1000}$.

So each tropical variety $\T(f_i)$ is the union of two tropical hyperplanes
and therefore $d_1=d_2=2$. The intersection $\T(f_1)\cap\T(f_2)$ is a 
tropical curve with four unbounded rays in each of the directions $e^{(1)},e^{(2)},e^{(3)},-e^{(2)}-e^{(2)}-e^{(3)}$. It is the union of four tropical lines, for example 
\begin{equation}
\label{lline}\T(t^{\epsilon_1}x+t^{\epsilon_2}y+t^{1+\epsilon_3}z+t^{3+\epsilon_4})\ \cap \T(t^{1+\delta_1}x+t^{\delta_2}y+t^{\delta_3}z+t^{\delta_4}) \, .
\end{equation}
Under the projection 
\[\pi:\RR^3\to\RR^2,\ \; x\mapsto\left(\begin{matrix}1& 0& 1\\0& 1& 2\end{matrix}\right) \]
the image of the tropical line resulting from $\varepsilon_i=\delta_i=0$ 
has one self-intersection point. Our theorem guarantees us at least $(2 \cdot 2)^2 = 16$
self-intersection points. In fact, there are actually 28 self-intersection points,
as can be seen from the induced subdivision of the corresponding fiber polytope
under a monomial map as in Example~\ref{ex:monomialmap},
depicted on the left side of Figure~\ref{fig:resultant}.

\begin{figure}[!ht]
\begin{center}
\includegraphics[height=5cm]{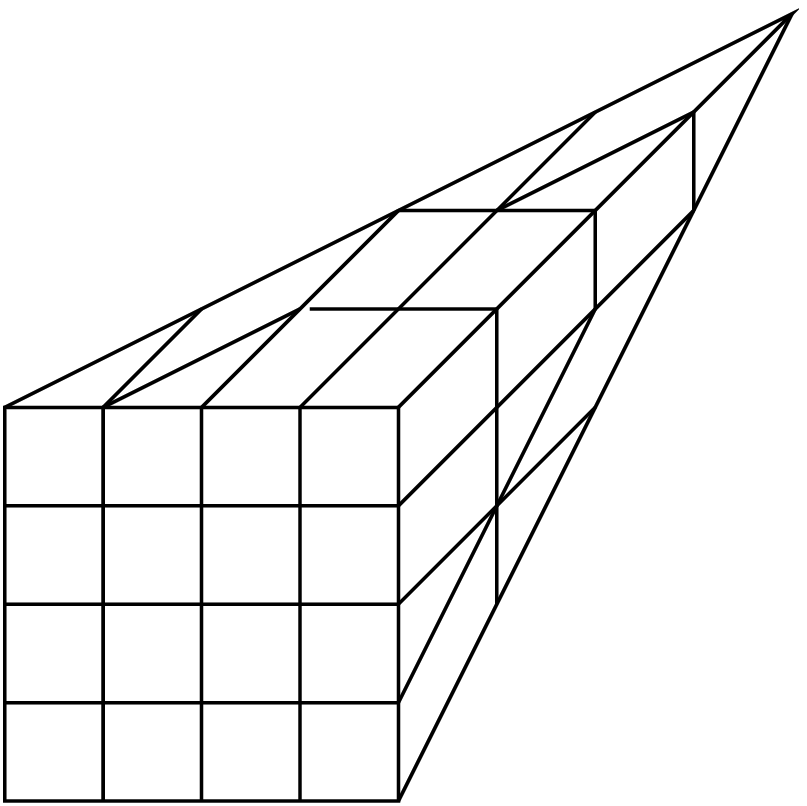} \hspace*{1cm}
\includegraphics[height=5cm]{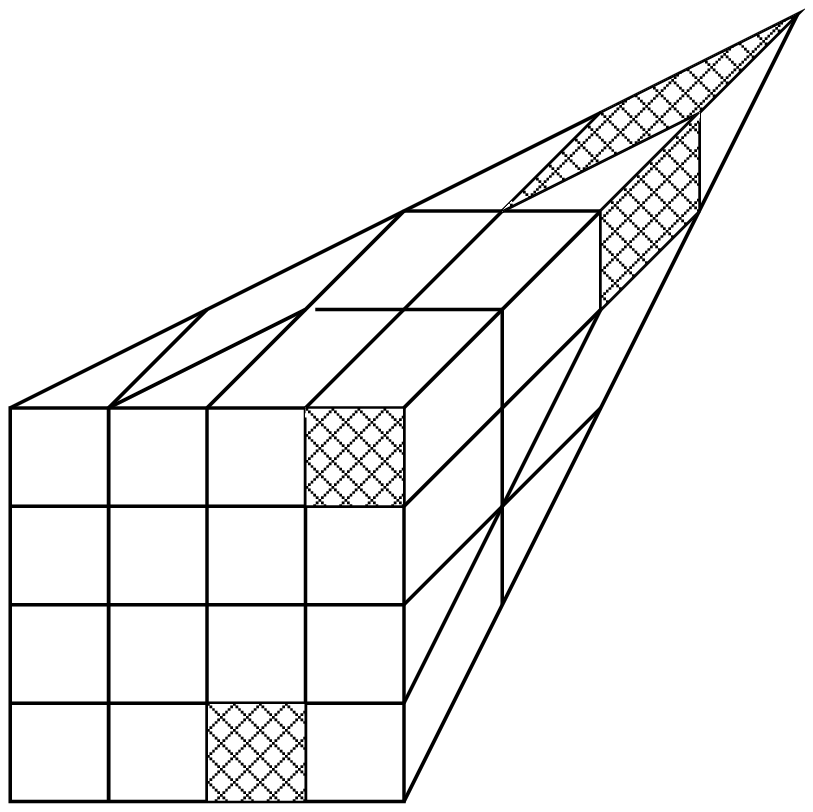}

\end{center}
\caption{\label{fig:resultant}The subdivided Newton polytope of the polynomial generating the hypersurface $\pi^{-1}\pi(\T(I))$.}
\end{figure}

Here the projection of (\ref{lline}) intersects the projection of the line 
\[
\T(x+y+tz+t^3) \cap \T(t^{1+\delta_1}x+t^{\delta_2}y+t^{\delta_3}z+t^{\delta_4})
\]
in four (out of the 28) points, two of them belonging to the intersection of the unbounded rays, the other two are intersections of a bounded edge with an unbounded ray; see the right picture of figure \ref{fig:resultant} for the corresponding dual cells in the subdivided Newton polytope.
\end{bsp}

\subsection{Upper bounds for the number of self-intersection points\label{se:upperbounds}}

\begin{bsp}
Let $L_4$ be a tropical line in $\R^4$.
There exist two pairs of half-rays having a non-empty intersection. 
We divide $L_4$ in three parts.
Let $R$ and $G$ be the images of these two pairs of half-rays, respectively,
and let $B$ be the image of the remaining segments and half-rays,
see Figure~\ref{line}.

\begin{figure}[!ht]
\begin{center}
\setlength{\unitlength}{0.4cm}
\includegraphics[height=4cm]{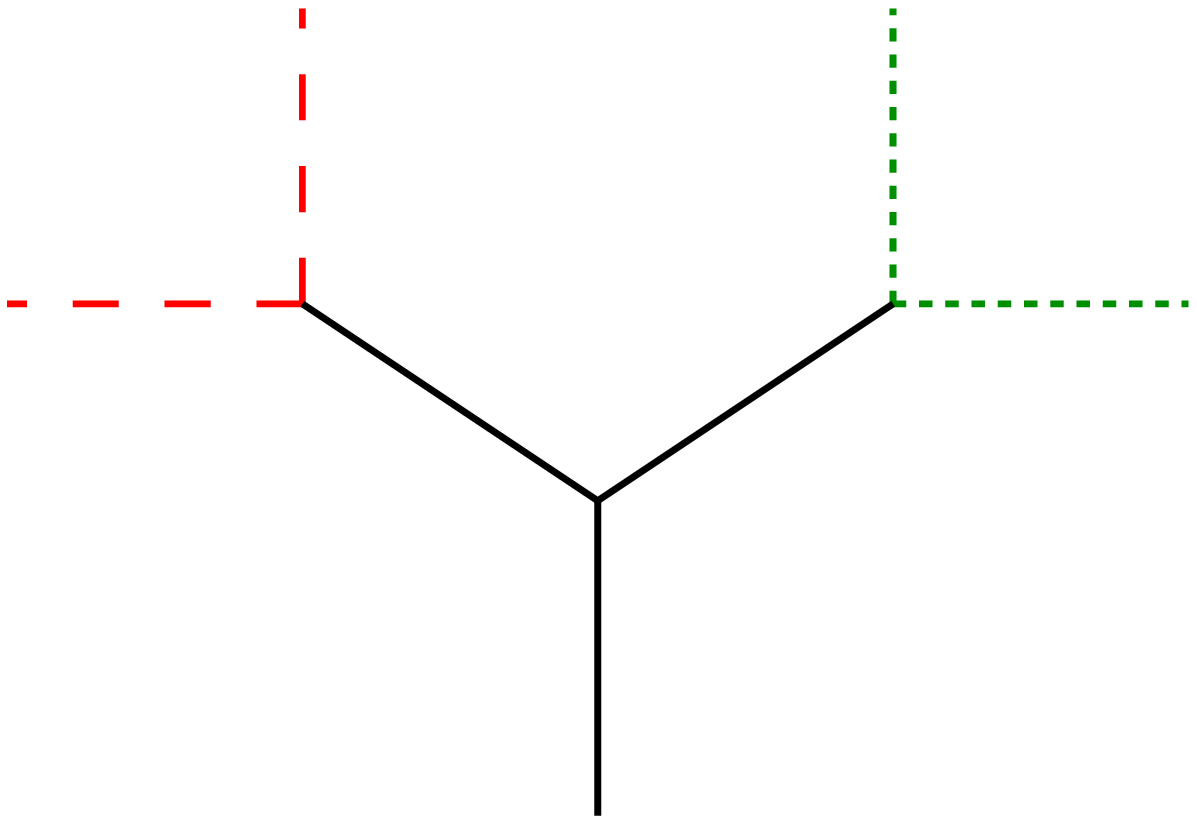}\hspace*{1cm}
\includegraphics[height=5cm]{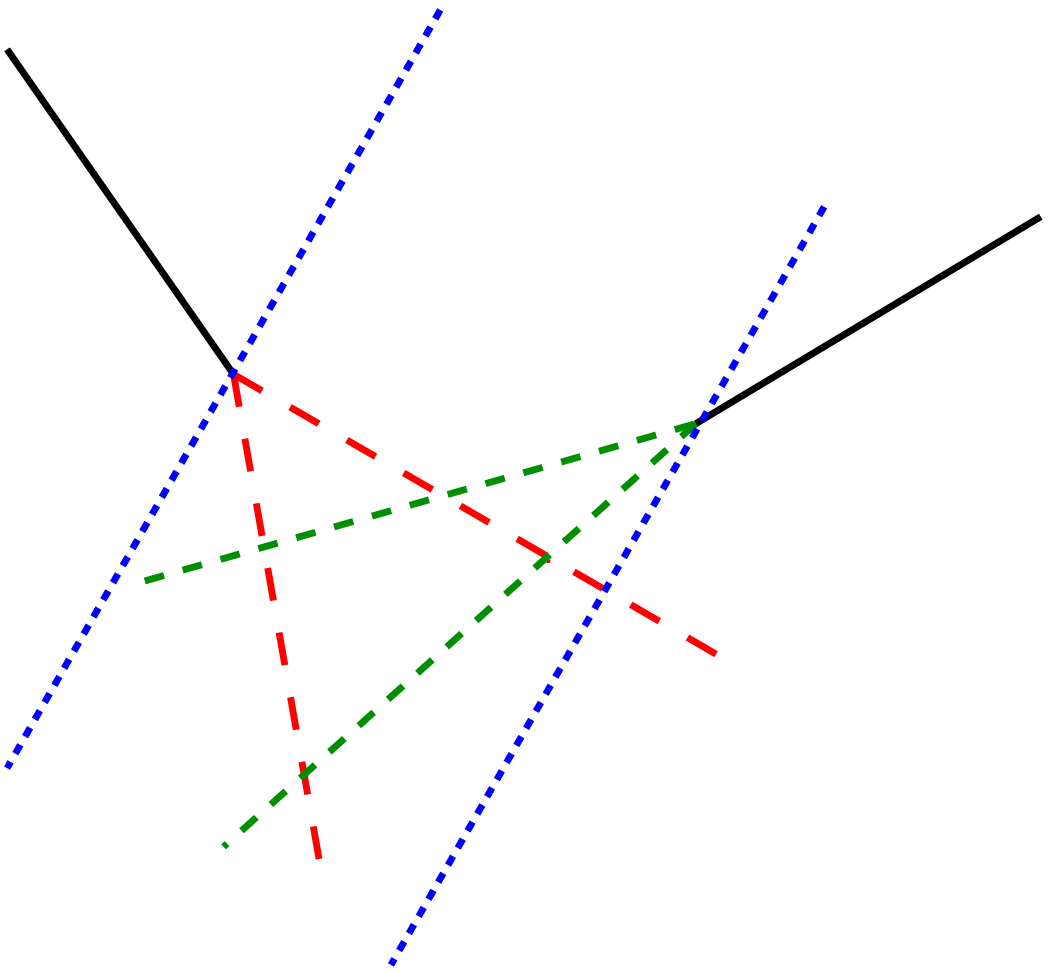}

\caption{\label{line} The first picture shows a tropical line $L$ in $\R^4$. The two
pairs of half-rays with a non-empty intersection are drawn in
dashed (red) and dotted (green) lines, respectively. 
The remaining segments and half-ray are drawn in solid lines. The second picture shows that there cannot be four selfintersection points.}
\end{center}
\end{figure}

If neither $R$ nor $G$ intersects with $B$ then by the total concavity
for plane tropical curves (as defined in section~\ref{se:tropical}), 
there can be at most three 
intersections between $B$ and $G$: If there are four selfintersection points then the two black bounded edges lie in two different nonintersecting halfspaces but have to intersect in one point. This is a contradiction.\\
If exactly one of $R$ and $G$ intersect with $B$, say $G$, then
by the total concavity $R$ can intersect $G$ in at most two
points.
If both $R$ and $G$ intersect with $B$ then by the total concavity
$R$ and $G$ can intersect in at most one point.

Hence, the image of a tropical line $L_3\subseteq\RR^3$ can have at most one 
and the image of a tropical line $L_4\subseteq\RR^4$ can have at most three self-intersection points. 
\end{bsp}

\ifpictures
\begin{figure}[!ht]
 \begin{center}
  \resizebox{11cm}{!}{\input{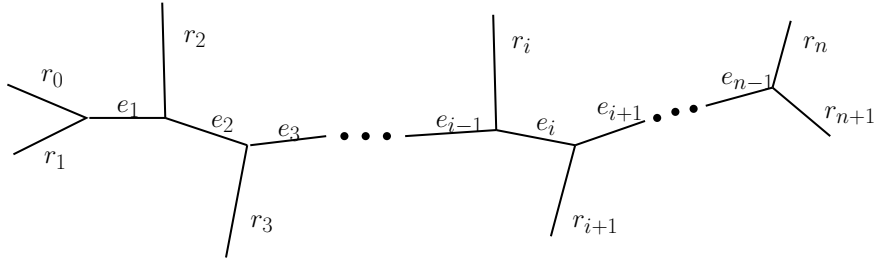}}
 \end{center}
\caption{\label{fig:caterpillar2} The caterpillar line $L_{n+1}$ in the proof
 of Theorem~\ref{theo:upper}}
\end{figure}
\fi

\medskip

\noindent
\emph{Proof of Theorem~\ref{theo:upper}.}
For $n=3,4$ we have seen the assertion. Now assume the assertion is 
true for some $n\geq 3$, and show it inductively for $n+1$.
Let $L_{n+1} \subseteq \R^{n+1}$ be a tropical caterpillar line and 
$\pi: \R^{n+1} \to \R^2$, $x \mapsto Ax$ be a rational projection.
The unbounded rays of $L_{n+1}$ are denoted by $r_0, \ldots, r_{n+1}$,
the bounded segments by $e_1, \ldots, e_{n-1}$ (see Figure~\ref{fig:caterpillar2}).

Let $v$ be the vertex incident to $r_0$, $r_1$ and $e_1$, and let
$w$ be the vertex incident to $e_1$, $e_2$ and $r_2$.
Consider the polyhedral complex in $\R^2$ obtained by 
replacing the projections $\pi(r_0)$, $\pi(r_1)$ and $\pi(e_1)$ 
by a ray emanating from $\pi(w)$ into the direction of $\pi(e_1)$.
This polyhedral complex
is the projection of some tropical line $L_n \subseteq \R^n$.
We denote that projection by $\pi'$ and by $\tilde{r}$ the ray in 
$L_n$ projecting to the new ray emanating from $\pi(w)$.
See Figure~\ref{Reduktion}.

\ifpictures
\begin{figure}[!ht]
\begin{center}
\resizebox{5cm}{!}{\input{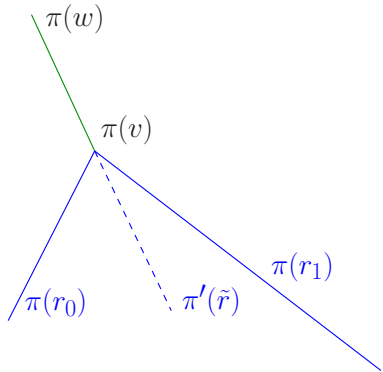}}
\caption{\label{Reduktion} The rays $\pi(r_0)$, $\pi(r_1)$ and
$\pi'(\tilde{r})$ in $\R^2$.}
\end{center}
\end{figure}
\fi

By the induction hypothesis, $\pi'(L_n)$ has at most $\binom{n-1}{2}$
self-intersection points. Denote by $\SIP_{\pi(r_i)}$ the 
self-intersection points of $\pi(r_i)$ with $\pi(L_{n+1})$ where
we do not count the (trivial)
intersection points with segments and half-rays emanating from $\pi(v)$,
$1 \le i \le 2$.
And analogously, let $\SIP_{\pi'(\tilde{r})}$ be the self-intersection points of 
$\pi'(\tilde{r})$ with $\pi(L_n)$.
In order to complete the inductive proof, we have to show
\[
  \sharp \SIP_{\pi(r_0)}+ \sharp \SIP_{\pi(r_1)}-\sharp \SIP_{\pi'(\tilde{r})} \ \leq \ n-1 \, .
\]
Each line $L_{n+1}$ has $n-1$ segments. One of them emanates from $v$.
Each other segment contributes to the above sum with at most 1 because its image cannot intersect with $\pi(r_0)$ and $\pi(r_1)$ and not with $\pi'(\tilde{r})$.

If no segment of $L_{n+1}$ has contribution $1$, then by the
concavity condition
there are at most $n-1$ rays with contribution $1$ 
(see Figure \ref{fig:case1}),
and we are done.
Assume in the following that there exists at least one segment with
contribution~1.

\ifpictures
\begin{figure}[t]
 \begin{center}
  \resizebox{5cm}{!}{
 \input{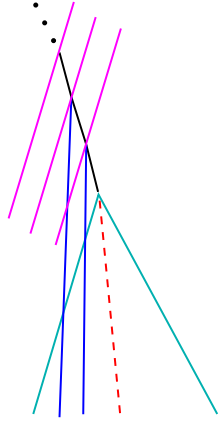}}
 \end{center}
\caption{\label{fig:case1} Not all rays can have contribution 1}
\end{figure}
\fi

For a segment $c=e_i$ or a half-ray $c=r_i$ define the \emph{contribution} $\alpha(c)$ by  
\[\contr(c)\ := \ \delta_{\pi(c)\cap \pi(r_0)}+\delta_{\pi(c)\cap \pi(r_1)}-\delta_{\pi(c)\cap\pi'(\tilde{r})} \, ,
\]
where $\delta_{a \cap b}$ is defined to be 1 if and only if the interiors
of $a$ and $b$ intersect, and 0 otherwise.
Then 
\begin{equation}
\label{eq:sumofcontributions}
\sum_{\atopfrac{c \mbox{ \tiny an edge of } L_{n+1}}{c\not\in \{r_0,r_1,e_1\}}}\hspace*{-0.7cm}\contr(c)\ = \ \sharp \SIP_{\pi(r_0)}+ \sharp \SIP_{\pi(r_1)}-\sharp \SIP_{\pi'(\tilde{r})} \, .
\end{equation}
Let $\mathcal{R}$ be the subset of rays $r_i$ in $\{r_2, \ldots, r_{n+1}\}$
which satisfy $\alpha(r_i) = 1$.
We will construct a map $\alpha$ from $\mathcal{R}$ 
to the set $\mathcal{B}$ of segments with nonpositive contribution.
This map will not be injective, but will satisfy the following condition.
Any segment $e$ with $\alpha(e) = 0$ will have at most one
preimage. Any segment $e$ with $\alpha(e) = -1$ will have at
most two preimages.
The existence of the map then implies that when passing over from
dimension $n$ to dimension $n+1$ the sum of the 
contributions~\eqref{eq:sumofcontributions} is increased by at most
the number of segments of $L_{n+1}$, i.e., by at most $n-1$,
as desired.

In order to construct the map, we proceed along the caterpillar
line. Denote by $e_{i_1},\ldots,e_{i_r}$ with $i_1<i_2<\dots<i_r$ 
the segments with $\contr(e_{i_j})\not=0$. Note that $i_1 \ge 2$.
We call the sequence $(e_{i_1}, \ldots, e_{i_r})$ the sequence of
\emph{separating} edges.
These separating edges naturally divide the caterpillar lines into
several parts each of which will be treated separately. 
We construct the map as follows.

\medskip

\noindent
{\tt Case 1:} $2\leq i\leq i_1$.
Each $r_i$ with positive contribution is assigned to 
the adjacent edge before $r_i$, 
\[e(r_i) \ = \ e_{i-1}\mbox{ for }\contr(r_i) \ = \ 1 \, . \]
Observe that by the total concavity property not all the rays $r_i$ with
$2\leq i\leq i_1$ can have $\contr(r_i)=1$.
Hence, there is a segment $e_l$ which has not been used yet
and thus can be used later (in case 3).

\medskip

\noindent
{\tt Case 2:} Let $e_s$ and $e_t$ be two separating edges which are
neighboring within the sequence of separating edges.
For $s+1 \le i \le t$ we assign as follows.
If $\contr(e_s)=-1$ then we assign
\[e(r_i) \ = \ e_{i-1} \mbox{ for } \contr(r_i) \ = \ 1 \, . \]
Note that $e_{t}$ is used at most twice. If
$\contr(e_{t})=1$ then we distinguish two cases.

\smallskip

\noindent
(a) If $\contr(e_{t})=1$ then, by similar arguments to case~1, there
exists a ray $r_k$ with $s+1\leq k\leq t$ and $\contr(r_k)\leq 0$. 
So, with the exception of $i=t$, 
we assign $r_i$ to the adjacent edge $e_i$ lying after $r_i$,
\[e(r_i) \ = \ e_i\mbox{ for } \contr(r_i) \ = \ 1 \, , \quad
s+1 \le i \le t - 1 \, . 
\]
If $\contr(r_{t})=1$ we use the (up to now unused) edge $e_{k}$
and assign $e(r_{t})=e_{k}$.

\smallskip

\noindent
(b) If $\contr(e_{t})=-1$ 
then we can assign again to the edge lying after $r_i$,
\[e(r_i) \ = \ e_i \mbox{ for } \contr(r_i) \ = \ 1 \, . \]

\medskip

\noindent
{\tt Case 3:} $i_r+1\leq i \leq n+1$. We distinguish two cases.

\smallskip

\noindent
(a) $\contr(e_{i_r})=1$. By the concavity condition,
not all $r_i,\ i_r+1\leq i\leq n+1$ can have $\contr(r_i)=1$.
Now we can assign
\[e(r_i) \ = \ e_i \mbox{ for } \contr(r_i) \ = \ 1 \, , \quad
  i_r+1\leq i\leq n-1 \, . \]
Since not all rays have contribution $1$ there is a ray $r_k,\ i_r+1\leq k\leq n+1$ with $\contr(r_k)\not=1$. So it remains to assign the last rays $r_n$ and $r_{n+1}$ to an appropriate edge if necessary. These two rays can be
assigned to $e_l$ and $e_k$, where $e_l$ is defined in case~1.

\smallskip

\noindent
(b) $\contr(e_{i_r})=-1$. Then we can assign
\[e(r_i) \ = \ e_{i-1} \mbox{ for } \contr(r_i) \ = \ 1 \, , 
  \quad i_r+1\leq i\leq n \]
and 
\[e(r_{n+1}) \ = \ e_l \mbox{ if } \contr(r_{n+1}) \ = \ 1, \mbox{ where $l$ was defined in case 1{\,}.} \]

\smallskip

Altogether, to each ray $r_i$ with $\contr(r_i)=1$ we have assigned 
a segment $e(r_i)$ satisfying the conditions stated above.
This proves the claim.
\hfill $\Box$

\subsection*{Acknowledgment.} We thank an anonymous referee for
very helpful remarks and corrections.

\bibliography{Literatur}
\bibliographystyle{plain}
\end{document}